\documentclass{article}
\usepackage{latexsym, a4wide}
\usepackage{amsmath, rotating}
\usepackage{amsfonts,amssymb, amsthm}
\usepackage{pictex,color}
\usepackage[normalem]{ulem}
\newcommand{\stackrell}[3]
{\begin{array}{c}\scriptstyle #1\\[-1ex] #2 \\[-1ex]\scriptstyle #3
\end{array} }
\newcommand\unnumberedfootnote[1]{ %
        \let\temp=\thefootnote %
        \renewcommand{\thefootnote}{}%
        \footnote{#1}%
        \let\thefootnote=\temp%
        \addtocounter{footnote}{-1}}

\newcommand{\sO}{\mathcal{O}}

\newtheorem{theorem}{Theorem}
\newtheorem{proposition}{Proposition}[section]
\newtheorem{lemma}[proposition]{Lemma}

\theoremstyle{definition}
\newtheorem{remark}[proposition]{Remark}

\pagestyle{headings}
\numberwithin{equation}{section}
\setcounter{secnumdepth}{2}

\setcounter{tocdepth}{2}
\usepackage{natbib}

\begin{document}

\title{\LARGE The pattern of genetic hitchhiking under recurrent
  mutation}

\author{\sc by Joachim Hermisson\thanks{University of Vienna,
    Nordbergstra\ss e 14, A-1090 Vienna, Austria\newline email:
    joachim.hermisson@math.univie.ac.at} and Peter
  Pfaffelhuber\thanks{Albert-Ludwigs University, Freiburg,
    Eckerstra\ss e 1, D-79104 Freiburg, Germany\newline email:
    peter.pfaffelhuber@stochastik.uni-freiburg.de}} \date{\today}
\maketitle
\unnumberedfootnote{\emph{AMS 2000 subject classification.} 92D15
  (Primary), 60J80, 60J85, 60K37, 92D10 (Secondary).}

\unnumberedfootnote{\emph{Keywords and phrases.} Selective sweep,
  genetic hitchhiking, soft selective sweep, diffusion approximation,
  Yule process, random background}

\begin{abstract}
\noindent
\emph{Genetic hitchhiking} describes evolution at a neutral locus that
is linked to a selected locus. If a beneficial allele rises to
fixation at the selected locus, a characteristic polymorphism pattern
(so-called selective sweep) emerges at the neutral locus. The
classical model assumes that fixation of the beneficial allele occurs
from a single copy of this allele that arises by mutation. However,
recent theory \citep{PenningsHermisson2006a, PenningsHermisson2006b}
has shown that recurrent beneficial mutation at biologically realistic
rates can lead to markedly different polymorphism patterns, so-called
\emph{soft selective sweeps}. We extend an approach that has recently
been developed for the classical hitchhiking model
\citep{SchweinsbergDurrett2005, EtheridgePfaffelhuberWakolbinger2006}
to study the recurrent mutation scenario. We show that the genealogy
at the neutral locus can be approximated (to leading orders in the
selection strength) by a marked Yule process with immigration. Using
this formalism, we derive an improved analytical approximation for the
expected heterozygosity at the neutral locus at the time of fixation
of the beneficial allele.
\end{abstract}


\section{Introduction}
The model of \emph{genetic hitchhiking}, introduced by
\cite{MaynardSmithHaigh1974}, describes the process of fixation of a
new mutation due to its selective advantage. During this fixation
process, linked neutral DNA variants that are initially associated
with the selected allele will \emph{hitchhike} and also increase in
frequency. As a consequence, sequence diversity in the neighborhood of
the selected locus is much reduced when the beneficial allele fixes, a
phenomenon known as a \emph{selective sweep}. This characteristic
pattern in DNA sequence data can be used to detect genes that have
been adaptive targets in the recent evolutionary history by
statistical tests (e.g. \citealt{KimStephan2002, NielsenEtAl2005,
  JensenThorntonBustamanteAquadro2007}).

Since its introduction, several analytic approximations to quantify
the hitchhiking effect have been developed
\citep{KaplanHudsonLangley1989, StephanWieheLenz1992, Barton1998,
  SchweinsbergDurrett2005, EtheridgePfaffelhuberWakolbinger2006,
  ErikssonFernstromMehligSagitiv2008}. The mathematical analysis of
selective sweeps makes use of the coalescent framework
\citep{Kingman1982, Hudson1983}, which describes the genealogy of a
population sample backward in time. Most studies follow the suggestion
of \citet[]{KaplanHudsonLangley1989} and use a structured coalescent
to describe the genetic footprint at a linked neutral locus,
conditioned on an approximated frequency path of the selected
allele. In this approach, population structure at the neutral locus
consists of the wild-type and beneficial background at the selected
locus, respectively. A mathematical rigorous construction was given by
\citet{BartonEtheridgeSturm2004}. Moreover, a structured ancestral
recombination graph was used in \citet{PfaffelhuberStudeny2007,
  McVean2007, PfaffelhuberLehnertStephan2008} to describe the common
ancestry of two neutral loci linked to the beneficial allele.

It has long been noted that the initial rise in frequency of a
beneficial allele is similar to the evolution of the total mass of a
supercritical branching process (\citealt{Fisher1930,
  KaplanHudsonLangley1989, Barton1998}; \citealt{Ewens2004},
p. 27f). This insight led to the approximation of the structured
coalescent by the genealogy of a supercritical branching process---a
Yule process \citep{OConnell1993, EvansOConnell1994}. Given a
selection intensity of $\alpha$ and a recombination rate of $\rho$
between the selected and neutral locus, it has been shown that a Yule
process with branching rate $\alpha$, which is marked at rate $\rho$
and stopped upon reaching $\lfloor 2\alpha\rfloor$ lines, is an
accurate approximation of the structured coalescent
\citep{SchweinsbergDurrett2005, EtheridgePfaffelhuberWakolbinger2006,
  PfaffelhuberHauboldWakolbinger2007}. For the standard scenario of
genetic hitchhiking, this approach leads to a refined analytical
approximation of the sampling distribution, estimates of the
approximation error and to efficient numerical simulations.

The classical hitchhiking model assumes that adaptation occurs from a
single origin of the beneficial allele. An explicit mutational process
at the selected locus, where the beneficial allele can enter the
population recurrently, is not taken into account. However, it has
recently been demonstrated that recurrent beneficial mutation at a
biologically realistic rate can lead to considerable changes in the
selective footprint in DNA sequence data \citep{HermissonPennings2005,
  PenningsHermisson2006a, PenningsHermisson2006b}. In the present
paper, we extend the Yule process approach of
\citet{EtheridgePfaffelhuberWakolbinger2006} to the full biological
model with recurrent mutation at the beneficial locus. Specifically,
we show that the genealogy at the selected site can be approximated by
a Yule process with immigration. Our results can serve as a basis for
a detailed analysis of patterns of genetic hitchhiking under recurrent
mutation, such as the site-frequency spectrum and linkage
disequilibrium patterns. As an example of such an application, we
derive the expected heterozygosity in Section \ref{Sub:het}.

~~

The paper is organized as follows. In Section \ref{S:model}, we
introduce the model as well as the structured coalescent and we
discuss the biological context of our work. In Section \ref{S:results}
we state results on the adaptive process, give the approximation of
the structured coalescent by a Yule process with immigration and apply
the approximation to derive expressions for the heterozygosity at the
neutral locus at the time of fixation. In Sections \ref{sub:P1},
\ref{sub:key} and \ref{sub:T1} we collect all proofs.

\section{The model}
\label{S:model}
We describe evolution in a two-locus system, where a neutral locus is
linked to a locus experiencing positive selection. In Section
\ref{sub:diff}, we first focus on the selected locus and formulate the
adaptive process as a diffusion. In Section \ref{sub:coal}, we
describe the genealogy at the neutral locus by a structured
coalescent. In Section \ref{sub:biol} we discuss the biological
context.

\subsection{Time-forward process}
\label{sub:diff}
Consider a population of constant size $N$. Individuals are haploid;
their genotype is thus characterized {\color{black}by a single copy of
  each allele}. Selection acts on a single bi-allelic locus. The
ancestral (wild-type) allele $b$ has fitness $1$ and the beneficial
variant $B$ has fitness $1+s$, where $s>0$ is the selection
coefficient.  Mutation from $b$ to $B$ is recurrent and occurs with
probability $u$ per individual per generation.  Let $X_t$ be the
frequency of the $B$ allele in generation $t$. In a standard
Wright-Fisher model with discrete generations, the number of
$B$-alleles in the offspring generation $t+1$ is $N X_{t+1}$, which is
binomially distributed with parameters $\frac{(1+s)X_t +
  u(1-X_t)}{(1+s)X_t+1-X_t}$ and $N$.


We assume that the beneficial allele $B$ is initially absent from the
population in generation $t = 0$ when the selection pressure on the
$B$ locus sets in. Since the $B$ allele is created recurrently by
mutation and we ignore back-mutations it will eventually fix at some
time $T$, i.e.\ $X_{t} = 1$ for $t\geq T$. This process of fixation
can be approximated by a diffusion.  To this end, let $\mathcal X^N =
(X_t^N)_{t=0,1,2,\ldots}$ with $X^N_0=0$ be the path of allele
frequencies of $B$.

Assuming $u=u_N\to 0, s=s_N\to 0$ such that $2Nu\to \theta,
Ns\to\alpha$ as $N\to\infty$, it is well-known (see e.g.\
\citealt{Ewens2004}) that $(X^N_{\lfloor 2Nt\rfloor})_{t\geq
  0}\Rightarrow (X_t)_{t\geq 0}$ as $N\to\infty$ where $\mathcal X :=
(X_t)_{t\geq 0}$ follows the SDE
\begin{equation}
\begin{aligned}\label{eq:diff}
  dX &= \big( \tfrac \theta 2 (1-X) + \alpha X (1-X) \big)dt +
  \sqrt{X(1-X)} dW
\end{aligned}
\end{equation}
with $X_0=0$. In other words, the diffusion approximation of $\mathcal
X^N$ is given by a diffusion $\mathcal X$ with drift and diffusion
coefficients
\begin{equation*}
  \begin{aligned}
    \mu_{\alpha,\theta}(x) = (\tfrac \theta 2 + \alpha x)(1-x), \qquad
    \sigma^2(x) =x(1-x).
  \end{aligned}
\end{equation*}
We denote by $\mathbb P^p_{\alpha,\theta}[.]$ and $\mathbb
E^p_{\alpha,\theta}[.]$ the probability distribution and its
expectation with respect to the diffusion with parameters
$\mu_{\alpha,\theta}$ and $\sigma^2$ 
{\color{black} and $X_0=p$ almost surely}. The fixation time can be
expressed in the diffusion setting as
\begin{align}\label{eq:defT}
  T:=\inf\{t\geq 0: X_t=1\}.
\end{align}

\subsection{Genealogies}
\label{sub:coal}
We are interested in the change of polymorphism patterns at a neutral
locus that is linked to a selected locus. We ignore recombination
within the selected and the neutral locus, but (with sexual
reproduction) there is the chance of recombination between the
selected and the neutral locus.  Let the recombination rate per
individual be $\rho$ in the diffusion scaling (i.e. $r=r_N$ is the
recombination probability in a Wright-Fisher model of size $N$ and
$r_N\xrightarrow{N\to\infty}0$ and $N
r_N\xrightarrow{N\to\infty}\rho$). Not all recombination events have
the same effect, however. We will be particularly interested in events
that change the genetic background of the neutral locus at the
selected site from $B$ to $b$, or vice-versa.  This is only possible
if $B$ individuals from the parent generation reproduce with $b$
individuals.  Under the assumption of random mating, the effective
recombination rate in generation $t$ that changes the genetic
background is thus $\rho X_t(1-X_t)$ in the diffusion setting.

Following \citet{BartonEtheridgeSturm2004}, we use the structured
coalescent to describe the polymorphism pattern at the neutral locus
in a sample. In this framework, the population is partitioned into two
demes according to the allele ($B$ or $b$) at the selected locus. The
relative size of these demes is defined by the fixation path $\mathcal
X$ of the $B$ allele. Only lineages in the same deme can
coalesce. Transition among demes is possible by either recombination
or mutation at the selected locus. We focus on the pattern at the time
$T$ of fixation of the beneficial allele. Throughout we fix a sample
size $n$.

\begin{remark}
  We define the coalescent as a process that takes values in
  partitions and introduce the following notation. Denote by
  $\Sigma_n$ the set of partitions of $\{1,...,n\}$. Each
  $\xi\in\Sigma_n$ is thus a set $\xi=\{\xi_1,...,\xi_{|\xi|}\}$ such
  that $\bigcup_{i=1}^{|\xi|} \xi_i = \{1,...,n\}$ and $\xi_i\cap
  \xi_j = \emptyset$ for $i\neq j$. Partitions can also be defined by
  equivalence relations and we write $k\sim_\xi\ell$ iff there is
  $1\leq i\leq |\xi|$ such that $k,\ell\in\xi_i$. Equivalently, $\xi$
  defines a map $\xi: \{1,...,n\}\to \{1,...,|\xi|\}$ by setting
  $\xi(k)=i$ iff $k\in\xi_i$.  We will also need the notion of a
  composition of two partitions.  If $\xi$ is a partition of
  $\{1,...,n\}$ and $\eta$ is a partition of $\{1,...,|\xi|\}$, define
  the partition $\xi\circ \eta$ on $\{1,...,n\}$ by
  $k\sim_{\xi\circ\eta}\ell$ iff $\xi(k)\sim_\eta \xi(\ell)$.\qed
\end{remark}

Setting $\beta=T-t$ we are interested in the genealogical process
$\xi^{\mathcal X}=(\xi_\beta)_{0\leq \beta\leq T}$ of a sample of size
$n$, conditioned on the path $\mathcal X$ of the beneficial allele
$B$.  The state space of $\xi^{\mathcal X}$ is
$$ 
S_n := 
\{(\xi^B, \xi^b): \xi^B\cup\xi^b \in \Sigma_n\}.
$$ 
Elements of $\xi^B$ ($\xi^b$) are ancestral lines of neutral loci that
are linked to a beneficial (wild-type) allele. Since there are only
beneficial alleles at time $T$, the starting configuration of
$\xi^{\mathcal X}$ is
$$ 
\xi^{\mathcal X}_0 = (\{1\},...,\{n\},\emptyset).
$$
For a given coalescent state $\xi^{\mathcal X}_\beta = (\xi^B, \xi^b)$
at time $\beta$, several events can occur, with rates that depend on
the value of the frequency path $\mathcal X$ at that time,
${X}_{T-\beta}$.  Coalescences of pairs of lines in the
beneficial (wild-type) background occur at rate $1/X_{T-\beta}$
($1/(1-X_{T-\beta})$).  Formally, for all pairs $1\leq i<j\leq
|\xi^B|$ and $1\leq i'<j'\leq |\xi^b|$, transitions occur at time
$\beta$ to
\begin{equation}\label{eq:rates1a}
\begin{aligned}
  \big((\xi^B\setminus \{\xi^B_i,\xi^B_j\})\cup
  \{\xi^B_i\cup\xi^B_j\},\xi^b\big) &
  \qquad \text{ with rate } \qquad  \frac{1}{X_{T-\beta}}\\
  \big(\xi^B, (\xi^b\setminus \{\xi^b_{i'},\xi^b_{j'}\})\cup
  \{\xi^b_{i'}\cup\xi^b_{j'}\}\big) & \qquad \text{ with rate } \qquad
  \frac{1}{1-X_{T-\beta}}.
\end{aligned}
\end{equation}
Changes of the genetic background happen either due to mutation at the
selected locus or recombination events between the selected and the
neutral locus. For $1\leq i\leq |\xi^B|$, transitions of genetic
backgrounds due to mutation occur at time $\beta$ from $\xi^{\mathcal
  X}_\beta =(\xi^B,\xi^b)$ for $1\leq i\leq |\xi^B|$ to
\begin{align}\label{eq:rates1}
  \big(\xi^B\setminus \{\xi^B_i\}, \xi^b\cup \{\xi^B_i\}\big)& \qquad
  \text{ with rate } \qquad \frac \theta
  2\frac{1-X_{T-\beta}}{X_{T-\beta}}.
\end{align}
(Recall that we assume that there are no back-mutations to the
wild-type). Moreover, changes of the genetic background due to
recombination occur at time $\beta$ for $1\leq i\leq |\xi^B|$, $1\leq
i'\leq |\xi^b|$ from $\xi^{\mathcal X}_\beta =(\xi^B,\xi^b)$ to
\begin{subequations}
  \label{eq:rates1b}
  \begin{align}\label{eq:rates1b1}
    \big(\xi^B\setminus \{\xi^B_i\}, \xi^b\cup \{\xi^B_i\}\big) &
    \qquad \text{ with rate } \qquad \rho
    (1-X_{T-\beta})\\ \label{eq:rates1b2} \big(\xi^B\cup
    \{\xi^b_{i'}\}, \xi^b\setminus \{\xi^b_{i'}\}\big) & \qquad \text{
      with rate } \qquad \rho X_{T-\beta}.
  \end{align}
\end{subequations}
All rates of $\xi^{\mathcal X}$ are collected in Table \ref{tab:2}.

\begin{table}
  \begin{center}
\begin{tabular}{|c|ccccc|}\hline
  \rule[-3mm]{0cm}{1cm}event & \hspace{0.5ex} coal in $B$ \hspace{0.5ex}& \hspace{0.5ex} coal in $b$ \hspace{0.5ex} &  \hspace{0.5ex} mut from $B$ to $b$\hspace{0.5ex} & \hspace{0.5ex} rec from $B$ to $b$ \hspace{0.5ex} & \hspace{0.5ex} rec from $b$ to $B$   \\[2ex]\hline
  \rule[-3mm]{0cm}{1cm}rate & $\frac{1}{X_t}$ & $\frac{1}{1-X_t}$ & $\frac \theta 2\frac{1-X_t}{X_t}$ & $\rho(1-X_t)$ &  $\rho X_t$ \\\hline
\end{tabular}
\end{center}
\caption{\label{tab:2}Transition rates in the process $\xi^{\mathcal X}$ at time $t=T-\beta$. Coalescence rates are equal for all pairs of partition elements in the beneficial and wild-type background. 
  Recombination and mutation rates are equal for all partition elements in $\xi^{B}$ and $\xi^{b}$.}
\end{table}

\begin{remark}
\mbox{}
  \begin{enumerate}
  \item The rates for mutation and recombination can be understood
    heuristically. Assume $X_t^N=x$ and assume $u,s,r$ are small. A
    neutral locus linked to a beneficial allele in generation $t+1$
    falls into one of three classes: (i) the {\color{black}class} for
    which the ancestor of the selected allele was beneficial has
    frequency $x + \mathcal O(u,s,r)$; (ii) the {\color{black}class}
    for which the beneficial allele was a wild-type and mutated in the
    last generation has frequency $u(1-x) + \mathcal O(us, ur)$; (iii)
    the {\color{black}class} for which the neutral locus was linked to
    a wild-type allele in generation $t$ and recombined with a
    beneficial allele has frequency $rx(1-x)+\mathcal
    O(ru,rs)$). Hence, if we are given a neutral locus in the
    beneficial background, the probability that its linked selected
    locus experienced a mutation one generation ago is
    $\frac{u(1-x)}{x} + \mathcal O(u^2, us,ur)$ and that it recombined
    with a wild-type allele one generation ago is $\frac{rx(1-x)}{x} +
    \mathcal O(ru, rs,r^2)$.  Thus, the rates \eqref{eq:rates1} and
    \eqref{eq:rates1b1} arise by a rescaling of time by $N$.
  \item In \eqref{eq:rates1a} and \eqref{eq:rates1} the rates have
    singularities when $X_{T-\beta}=0$. However, we will show in Lemma
    \ref{l:est2} using arguments from \cite{BartonEtheridgeSturm2004}
    and \cite{Taylor2007} that a line will almost surely leave the
    beneficial background \emph{before} such a singularity occurs. In
    particular, the structured coalescent process $\xi^{\mathcal X}$
    is well-defined.
  \end{enumerate}
\end{remark}


\subsection{Biological context}
\label{sub:biol}
A \emph{selective sweep} refers to the reduction of sequence diversity
and a characteristic polymorphism pattern around a positively selected
allele.  Models show that this pattern is most pronounced close to the
selected locus if selection is strong and if the sample is taken in a
short time window after the fixation of the beneficial allele (i.e.\
before it is diluted by new mutations). Today, biologists try to
detect sweep patterns in genome-wide polymorphism scans in order to
identify recent adaptation events (e.g. \citealp{HarrEtAl2002,
  OmettoEtAl2005, WilliamsonEtAl2005}).

The detection of sweep regions is complicated by the fact that certain
demographic events in the history of the population (in particular
bottlenecks) can lead to very similar patterns. Vice-versa, also the
footprint of selection can take various guises.  In particular, recent
theory shows that the pattern can change significantly if the
beneficial allele at the time of fixation traces back to more than a
single origin at the start of the selective phase (i.e.\ there is more
than a single ancestor at this time). As a consequence, genetic
variation that is linked to any of the successful origins of the
beneficial allele will survive the selective phase in proximity of the
selective target and the reduction in diversity (measured e.g. by the
number of segregating sites or the average heterozygosity in a sample)
is less severe.  \citet{PenningsHermisson2006a} therefore called the
resulting pattern a \emph{soft selective sweep} in distinction of the
classical \emph{hard sweep} from only a single origin. Nevertheless,
also a soft sweep has highly characteristic features, such as a more
pronounced pattern of \emph{linkage disequilibrium} as compared to a
hard sweep \citep{PenningsHermisson2006b}.

Soft sweeps can arise in several biological scenarios. For example,
multiple copies of the beneficial allele can already segregate in the
population at the start of the selective phase (adaptation from
standing genetic variation; \citealt{HermissonPennings2005,
  Przeworski2005}). Most naturally, however, the mutational process at
the selected locus itself may lead to a recurrent introduction of the
beneficial allele. Any model, like the one in this article, that
includes an explicit treatment of the mutational process will
therefore necessarily also allow for soft selective sweeps. For
biological applications the most important question then is:
\emph{When are soft sweeps from recurrent mutation likely?}  The
results of \cite{PenningsHermisson2006a} as well as Theorem \ref{T1}
in the present paper show that the probability of soft selective
sweeps is mainly dependent on the population-wide mutation rate
$\theta$. The classical results of a \emph{hard sweep} are reproduced
in the limit $\theta\to 0 $ and generally hold as a good approximation
for $\theta < 0.01$ in samples of moderate size. For larger $\theta$,
approaching unity, soft sweep phenomena become important.

Since $\theta$ scales like the product of the (effective) population
size and the mutation rate per allele, soft sweeps become likely if
either of these factors is large. Very large population sizes are
primarily found for insects and microbial organisms. Consequently,
soft sweep patterns have been found, e.g., in {\em Drosophila}
\citep{SchlenkeBegun2004} and in the malaria parasite {\em Plasmodium
  falsiparum} \citep{NairEtAl2007}.  Since point mutation rates
(mutation rates per DNA base per generation per individual) are
typically very small ($\sim 10^{-8}$), large mutation rates are
usually found {\color{black}in situations where many possible mutations
  produce the same (i.e.\ physiologically equivalent) allele}. This
holds, in particular, for adaptive loss-of-function mutations, where
many mutations can destroy the function of a gene. An example is the
loss of pigmentation in {\em Drosophila santomea}
\citep{JeongEtAl2008}.  But also adaptations in regulatory regions
often have large mutation rates and can occur recurrently. A
well-known example is the evolution of adult lactose tolerance in
humans, where several mutational origins have been identified
\citep{TishkoffEtAl2007}.

Several extensions of the model introduced in Section \ref{S:model}
are possible. In a full model, we should allow for the possibility of
back-mutations from the beneficial to the wild-type allele in natural
populations. However, such events are rarely seen in any sample
because such back-mutants have lower fitness and are therefore less
likely to contribute any offspring to the population at the time of
fixation. Another step towards a more realistic modeling of genetic
hitchhiking under recurrent mutation would be to allow for beneficial
mutation to the same (physiological) allele at multiple different
positions of the genome. In such a model, recombination between the
different positions of the beneficial mutation in the genome would
complicate our analysis.

\section{Results}
\label{S:results}
The process of fixation of the beneficial allele is described by the
diffusion \eqref{eq:diff}. In Section \ref{sub:fixtime}, we will
derive approximations for the fixation time $T$ of this process. These
results will be needed in Section \ref{sub:yule}, where we construct
an approximation for the structured coalescent $\xi^{\mathcal X}$.

\subsection{Fixation times} 
\label{sub:fixtime}
In the study of the diffusion \eqref{eq:diff} the time $T$ of fixation
of the beneficial allele (see \eqref{eq:defT}) is of particular
interest. We decompose the interval $[0;T]$ by the last time a
frequency of $X_t=0$ was reached, i.e., we define
\begin{equation*}
  \begin{aligned}
    T_0 & := \sup\{t\geq 0: X_t=0\}, \qquad T^\ast := T-T_0.
  \end{aligned}
\end{equation*}
Note that for $\theta\geq 1$, the boundary $x=0$ is inaccessible, such
that $T_0=0, T^\ast=T$, almost surely, in this case. 

\begin{proposition}
\label{P1}
\begin{enumerate}
\item Let $\gamma_e\approx 0.57$ be Euler's $\gamma$. For $\theta>0$,
  \begin{align}
    \label{eq:P1a}
    \mathbb E^0_{\alpha,\theta}[T] & =\frac 1\alpha \Big(
    2\log(2\alpha) + 2\gamma_e + \frac 1\theta - \theta
    \sum_{n=1}^\infty \frac{1}{n(n+\theta)}\Big) + \mathcal O\Big(
    \frac{\log\alpha}{\alpha^2}\Big) + \frac 1\theta \mathcal O\big(
    \alpha e^{-\alpha}\big)
  \end{align}
\item For $\theta\geq 1$, almost surely, $T=T^\ast$.
\item For $0\leq \theta\leq 1$,
  \begin{align}
    \label{eq:P2a}
    \mathbb E^0_{\alpha,\theta}[T^\ast] & = \frac 2\alpha \big(
    \log(2\alpha) + \gamma_e\big) + \mathcal O \Big(
    \frac{\log\alpha}{\alpha^2}\Big)
  \end{align}
\item For $\theta\geq 0$,
  \begin{align}
    \label{eq:P2b}
    \mathbb V^0_{\alpha,\theta}[T^\ast] & = \mathcal O\Big( \frac
    1{\alpha^2}\Big).
  \end{align}
\end{enumerate}

All error terms are in the limit for large $\alpha$ and are uniform on
compacta in $\theta$.
\end{proposition}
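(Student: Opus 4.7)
The plan is to apply the classical Green's function machinery for one-dimensional diffusions, using that the coefficients in \eqref{eq:diff} give $2\mu_{\alpha,\theta}(x)/\sigma^2(x) = \theta/x + 2\alpha$ and hence an explicit scale density $s'(y) = y^{-\theta} e^{-2\alpha y}$ and speed density $m(y) = y^{\theta-1} e^{2\alpha y}/(1-y)$ (up to multiplicative constants that cancel below). For part (ii), Feller's boundary test with these expressions shows that the boundary $0$ is inaccessible (entrance) from the interior exactly when $\theta \geq 1$, because $\int_0^\epsilon m(z) s(z)\, dz$ diverges in that regime. Hence $T_0 = 0$ almost surely for $\theta \geq 1$, giving $T = T^\ast$.

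For part (i), the function $u(p) := \mathbb E^p_{\alpha,\theta}[T]$ solves $Lu = -1$ with $u(1) = 0$ and the natural entrance condition at $0$. Solving this gives the representation
\begin{align*}
\mathbb E^0_{\alpha,\theta}[T] = 2\int_0^1 \int_0^y \frac{y^{-\theta} z^{\theta-1}}{1-z}\, e^{2\alpha(z-y)}\, dz\, dy.
\end{align*}
The large-$\alpha$ asymptotics are extracted by the substitution $y = u/(2\alpha)$, $z = v/(2\alpha)$, which concentrates the integral on $u, v = \mathcal{O}(1)$; writing $J(u) := u^{-\theta}\int_0^u v^{\theta-1} e^{v-u}\, dv$, the small-$u$ behaviour $J(u) \sim 1/\theta$ produces the $1/(\alpha\theta)$ contribution, while the large-$u$ expansion $J(u) = 1/u + \mathcal{O}(u^{-2})$ gives the leading $2\log(2\alpha)/\alpha$ term. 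The remaining constants $\gamma_e$ and the series $\theta\sum_{n\geq 1} 1/(n(n+\theta))$ arise from exact evaluation of the finite part via Gamma-function and digamma identities, while the Taylor remainder in $(1-z)^{-1}$ and the exponentially small tail near $y = 1$ account for the stated error terms.

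For parts (iii) and (iv), when $0 \leq \theta < 1$ the boundary $0$ is regular and $T^\ast$ is the length of the final excursion away from $0$. Introducing the Doob $h$-transform with $h(x) := \mathbb P^x_{\alpha,\theta}[X \text{ hits } 1 \text{ before returning to } 0]$ (the solution of $Lh = 0$, $h(0) = 0$, $h(1) = 1$), the process conditioned to reach $1$ before returning to $0$ is a one-dimensional diffusion to which the same Green's function method applies; the $1/\theta$ boundary contribution is now absent because the conditioning removes the slow diffusive escapes near $0$, yielding \eqref{eq:P2a}. The variance bound \eqref{eq:P2b} follows from the second-moment equation $Lw = -2u^\ast$: computing $\mathbb E^0[(T^\ast)^2]$ via the analogous double-integral representation and subtracting $(\mathbb E^0[T^\ast])^2$, the leading $(2\log(2\alpha))^2/\alpha^2$ contributions cancel, leaving fluctuations of order $1/\alpha^2$ consistent with the near-deterministic character of the bulk sweep. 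The main obstacle will be the uniformity of all error bounds in $\theta$ on compacta, especially near $\theta = 0$ (where the $1/\theta$ contribution becomes singular) and $\theta = 1$ (where the boundary classification changes); this requires careful decomposition of the integration region into pieces dominated by different asymptotic regimes, together with quantitative control of the Taylor remainders and of the behaviour of $h$ near $0$ in the conditioned case.
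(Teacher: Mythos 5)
Your overall strategy coincides with the paper's: both compute $\mathbb E^0_{\alpha,\theta}[T]$ from the Green function of the diffusion, obtain $\mathbb E^0_{\alpha,\theta}[T^\ast]$ from the Green function of the process conditioned to hit $1$ before $0$ (your Doob $h$-transform with $h=P^1_{\alpha,\theta}$ is exactly the paper's $t^\ast_{\alpha,\theta}=P^1_{\alpha,\theta}\cdot t_{\alpha,\theta}$), and control the variance through the iterated Green-function identity, in which the $(\log\alpha)^2/\alpha^2$ contributions cancel exactly. Your double-integral representation for $\mathbb E^0_{\alpha,\theta}[T]$ agrees with the paper's, and your boundary classification for part (ii) is the standard argument the paper invokes.

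There is, however, a genuine error in your asymptotic analysis of part (i). The substitution $y=u/(2\alpha)$, $z=v/(2\alpha)$ does \emph{not} concentrate the integral on $u,v=\mathcal O(1)$: the factor $(1-z)^{-1}$ creates a second boundary layer at $z$ near $1$ that contributes at leading order. Concretely, replacing $(1-z)^{-1}$ by $1$ gives
$2\int_0^1 J(2\alpha y)\,dy=\tfrac1\alpha\int_0^{2\alpha}J(u)\,du$, and since $J(u)=1/u+\mathcal O(u^{-2})$ for large $u$ this produces only $\tfrac{1}{\alpha}\big(\log(2\alpha)+\mathcal O(1)\big)$ --- a single logarithm, not the claimed $2\log(2\alpha)/\alpha$. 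The missing $\tfrac{1}{\alpha}\big(\log(2\alpha)+\gamma_e\big)$ comes precisely from the $(1-z)^{-1}$ factor near $z=1$ (the sojourn of the sweep near fixation), which your sketch relegates to a ``Taylor remainder'' error term. The paper avoids this by splitting $\tfrac{1}{z(1-z)}=\tfrac1z+\tfrac1{1-z}$ and carrying out a separate, nontrivial asymptotic analysis of the $\tfrac{1}{1-z}$-part, which yields an additional $\tfrac1\alpha\big(\log(2\alpha)+\gamma_e\big)+\mathcal O\big(\tfrac{\log\alpha}{\alpha^2}\big)$; see \eqref{eq:pp411}. The same two-boundary-layer structure is responsible for the factor $2$ in \eqref{eq:P2a}, so your treatment of part (iii) needs the same correction. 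Apart from this, your outlines of parts (ii) and (iv) match the paper's arguments.
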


\begin{remark}
\mbox{}

\begin{enumerate}
\item Note that \eqref{eq:P1a} reduces to \eqref{eq:P2a} for $\theta =
  1$ as it should since $T_0 \xrightarrow{\theta\uparrow 1} 0$.
\item For $\theta\leq 1$, we find that $\mathbb
  E^0_{\alpha,\theta}[T^\ast]$ is independent of $\theta$ to the order
  considered. In particular it is identical to the conditioned
  fixation time without recurrent mutation ($\theta = 0$) that was
  previously derived
  \citep{vanHerwaarden2002,HermissonPennings2005,EtheridgePfaffelhuberWakolbinger2006}.
  A detailed numerical analysis (not shown) demonstrates that the
  passage times of the beneficial allele decrease at intermediate and
  high frequencies, but increase at low frequencies $X \lesssim
  1/\alpha$ where recurrent mutation prevents the allele from dying
  out.  Both effects do not affect the leading order and precisely
  cancel in the second order for large $\alpha$.
\item To leading order in $1/\theta$ and $\alpha$, the total fixation
  time \eqref{eq:P1a} is
  $$   \mathbb E^0_{\alpha,\theta}[T] \approx \frac{1}{\alpha \theta} +
  E^0_{\alpha,\theta}[T^\ast].$$ Since the fixation probability of a
  new beneficial mutation is $P_{\text{fix}} \approx 2s$ and the rate
  of new beneficial mutations per time unit (of $N$ generations) is
  $N\theta/2$, mutations that are destined for fixation enter the
  population at rate $sN\theta = \alpha\theta$. The total fixation
  time thus approximately decomposes into the conditioned fixation
  time $\mathbb{E}[T^\ast]$ and the exponential waiting time for the
  establishment of the beneficial allele $\frac{1}{\alpha\theta}$.
\item In applications, selective sweeps are found with $\alpha\geq
  100$. We can then ignore the error term $\tfrac 1\theta \mathcal
  O\big( \alpha e^{-\alpha/2}\big)$ in \eqref{eq:P1a} even for
  extremely rare mutations with $\theta \sim 10^{-10}$.
\item The proof of Proposition \ref{P1} can be found in Section
  \ref{sub:P1}.
  \end{enumerate}
\end{remark}

\subsection{The Yule approximation}
\label{sub:yule}
We will provide a useful approximation of the coalescent process with
rates defined in \eqref{eq:rates1a}--\eqref{eq:rates1b}. As already
seen in the last section the process of fixation of the beneficial
allele can be decomposed into two parts. First, the beneficial allele
has to be established, i.e., its frequency must not hit 0 any more.
Second, the established allele must fix in the population. The first
phase has an expected length of about $1/(\alpha\theta)$ and hence may
be long even for large values of $\alpha$, depending on $\theta$.  The
second phase has an expected length of order $(\log\alpha)/\alpha$ and
is thus short for large $\alpha$, independently of $\theta$. For the
potentially long first phase we give an approximation for the
distribution of the coalescent on path space by a finite Kingman
coalescent. For the short second phase, we obtain an approximation of
the distribution of the coalescent (which is started at time $T$) at
time $T_0$ using a Yule process with immigration (which constructs a
genealogy forward in time). To formulate our results, define
$$ 
\beta_0:=T-T_0.
$$ 
Setting $X_t=0$ for $t<0$ we will obtain approximations for the distribution
of coalescent states at time $\beta_0$, 
\begin{equation*}
    \xi_{\beta_0} := (\xi_{\beta_0}^B, \xi_{\beta_0}^b) := \int
    \mathbb P_{\alpha,\theta}[d\mathcal X] \xi_{\beta_0}^{\mathcal X},
\end{equation*}
and of the genealogies for $\beta > \beta_0$, i.e.\ in the phase 
prior to establishment of the beneficial allele, 
\begin{equation*}
    \xi_{\geq \beta_0}:=(\xi_{\geq \beta_0}^B, \xi_{\geq \beta_0}^b) := \int \mathbb
    P_{\alpha,\theta}[d\mathcal X] (\xi_{\beta_0+t}^{\mathcal X})_{t\geq 0}.
\end{equation*}
Note that $\xi_{\beta_0}\in \mathbb{S}_n$ while
$\xi_{\geq \beta_0}\in \mathcal D([0;\infty), \mathbb{S}_n)$, the space of
cadlag paths on $[0;\infty)$ with values in $\mathbb{S}_n$.

~

\begin{figure}
\begin{center}
\includegraphics[width=13.5cm]{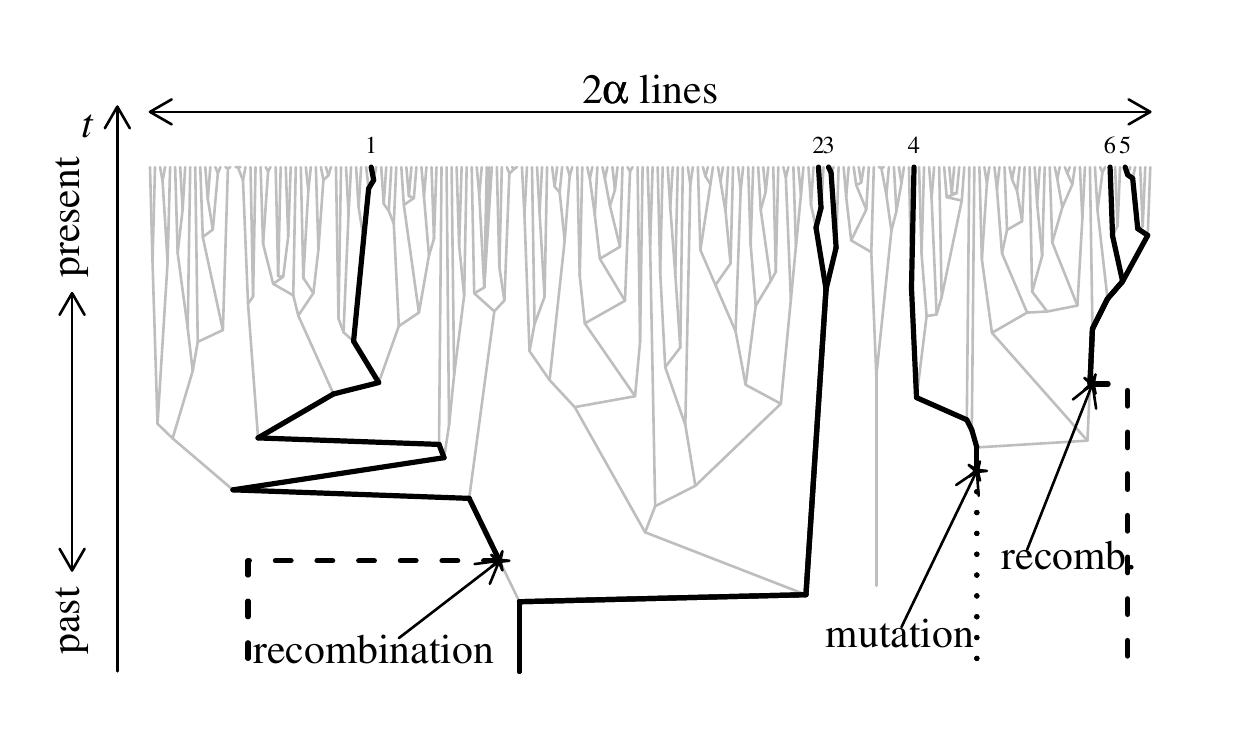}

\vspace{-1cm}
\end{center}
\caption{\label{fig:yule}The Yule process approximation for the
  genealogy at the neutral locus in a sample of size
  $n=6$. {\color{black} The Yule process with immigration produces a
    random forest (grey lines) which grows from the past (past) to the
    present (top). A sample is drawn in the present. Every line is
    marked at constant rate along the Yule forest indicating
    recombination events. Sample individuals within the same tree not
    separated by a recombination mark share ancestry and thus belong
    to the same partition element of $\Upsilon$.} In this realization,
  we find $\Upsilon = \{\{1\}, \{2,3\}, \{4\}, \{5,6\}\}$.
}
\end{figure}

Let us start with $\xi_{\beta_0}$ (see Figure \ref{fig:yule} for an
illustration of our approximation). Consider the selected site first.
Take a Yule process with immigration. Starting with a single line,
\begin{itemize}
\item every line splits at rate $\alpha$.
\item new lines (mutants) immigrate at rate $\alpha\theta$.
\end{itemize}
For this process we speak of Yule-time $i$ for the time the Yule
process has $i$ lines for the first time. We stop this Yule process
with immigration at Yule-time $\lfloor 2\alpha\rfloor$. In order to
define identity by descent within a sample of $n$ lines, take a sample
of $n$ randomly picked lines from the $\lfloor 2\alpha\rfloor$. Note
that the Yule process with immigration defines a random forest
$\mathcal F$ and we may define the random partition
$\widetilde\Upsilon$ of $\{1,...,n\}$ by saying that
$$ k\sim_{\widetilde\Upsilon} \ell \iff \, k,\ell \text{ are in the same tree of }\mathcal F.$$
As a special case of Theorem \ref{T1} we will show that
$\widetilde\Upsilon$ is a good approximation to $\xi_{\beta_0}$ in the
case $\rho=0$.

In order to extend the picture to the general case with recombination,
consider a single line of the neutral allele at time $T$. The line may
recombine in the interval $[T_0,T]$ and thus have an ancestor at time
$T_0$, which carries the wild-type allele. Since recombination events
take place with a rate proportional to $\rho$ and $T-T_0=T^\ast$ is of
the order $(\log\alpha)/\alpha$, it is natural to use the scaling
\begin{align}\label{eq:defGamma}
  \rho = \gamma \frac{\alpha}{\log\alpha}.
\end{align}
Take a sample of $n$ lines from the $\lfloor 2\alpha\rfloor$ lines of
the top of the Yule tree and consider the subtree of the $n$
lines. Indicating recombination events, we mark all branches in the
subtree independently. A branch in the subtree, which starts at
Yule-time $i_1$ and ends at Yule-time $i_2$ is marked with probability
$1-p_{i_1}^{i_2}(\gamma,\theta)$, where
\begin{align}\label{eq:pii}
  p_{i_1}^{i_2}(\gamma,\theta) := \exp\Big(
  -\frac{\gamma}{\log\alpha}\sum_{i=i_1+1}^{i_2} \frac
  1{i+\theta}\Big).
\end{align}
Then, define the random partition $\Upsilon$ of $\{1,...,n\}$ (our
approximation of $\xi_{\beta_0}$) by
$$ k\sim_\Upsilon \ell\iff k\sim_{\widetilde\Upsilon}\ell \;\wedge\;\text{path from }k \text{ to }
\ell \text{ in }\mathcal F\text{ not separated by a mark.}$$

~

To obtain an approximation of $\xi_{\geq\beta_0}$ consider the finite
Kingman coalescent $\mathcal C := ( C_t)_{t\geq 0}$. Given there are
$m$ lines such that $ C_t = C = \{ C_1,..., C_m\}$, transitions occur
for $1\leq 1<j\leq m$ to
$$ \big(  C\setminus \{ C_i, C_j\}\big) 
\cup \{ C_i\cup C_j\}\text{ with rate } 1.$$ Given $\Upsilon$, our
approximation of $\xi_{\geq\beta_0}$ is
$$\Upsilon\circ \mathcal C := (\Upsilon\circ  C_t)_{t\geq 0}.$$

\begin{remark}
  Our approximations are formulated in terms of the total variation
  distance of probability measures. Given two probability measures
  $\mathbb P, \mathbb Q$ on a $\sigma$-algebra $\mathcal A$, the total
  variation distance is given by
  $$ d_{TV}(\mathbb P, \mathbb Q) = \tfrac 12 \sup_{A\in \mathcal A} |\mathbb P[A] - \mathbb Q[A]|.$$
  Similarly, for two random variables $X, Y$ on $\Omega$ with
  $\sigma(X)=\sigma(Y)$ and distributions $\mathcal L(X)$ and
  $\mathcal L(Y)$ we will write
  $$ d_{TV}(X, Y) = d_{TV}(\mathcal L(X), \mathcal L(Y)).$$
\end{remark}

\newpage

\begin{theorem}\label{T1}
  \mbox{}
  \begin{enumerate}
  \item 
The distribution of coalescent states $\xi_{\beta_0}$ at time $\beta_0$ 
under the full model can be approximated by a distribution of coalescent states 
of a Yule process with immigration. In particular,
    \begin{align} \label{eq:alsu}\mathbb
      P_{\alpha,\theta}[\xi_{\beta_0}^B = \emptyset]=1
    \end{align}
    and the bound
    \begin{align}\label{eq:bound1a}
      d_{TV} \big(\xi_{\beta_0}^b, \Upsilon\big) &= \mathcal O \Big(
      \frac{1}{(\log\alpha)^2}\Big)
    \end{align}
    holds in the limit of large $\alpha$ and is uniform on compacta in
    $n, \gamma$ and $\theta$.
  \item The distribution of genealogies $\xi_{\geq\beta_0}$ prior to
    the establishment of the beneficial allele can be approximated by
    the distribution of genealogies under a composition of a Yule
    process with immigration and the Kingman coalescent.  In
    particular,
    \begin{align*}
      \mathbb P[\xi_{\geq\beta_0}^B \neq (\emptyset)_{t\geq 0}] =
      \mathcal O \Big( \frac{1}{\alpha\log\alpha}\Big)
    \end{align*}
    and the bound
    \begin{align}\label{eq:bound2}
      d_{TV} \big(\xi_{\geq\beta_0}^b, \Upsilon \circ \mathcal C\big) & =
      \mathcal O \Big( \frac{1}{(\log\alpha)^2}\Big)
    \end{align}
    holds in the limit of large $\alpha$ and is uniform on compacta
    in $n, \gamma$ and $\theta$.
  \end{enumerate}
\end{theorem}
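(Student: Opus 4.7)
The plan is to decompose the time interval $[0,T]$ at the establishment time $T_0$ and to treat the two regimes $[T_0,T]$ and $[0,T_0]$ separately. During the short fixation phase $[T_0,T]$, which has expected length $(2\log(2\alpha))/\alpha$ by Proposition \ref{P1}, I would couple the frequency path of $B$ to a supercritical branching process with immigration; the coalescent read backward in time from $T$ is then encoded by the forward Yule tree with splitting rate $\alpha$ per line and total immigration rate $\alpha\theta$, stopped when $\lfloor 2\alpha\rfloor$ lines are reached. During the establishment phase $[0,T_0]$ the wild-type background has size close to $1$, so lineages there follow a near-neutral Kingman coalescent.

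For part 1, I would first establish \eqref{eq:alsu} by observing that the combined rate of coalescence in $B$ and of mutation from $B$ to $b$ is of order $1/X_{T-\beta}$, which integrates to infinity as $\beta\uparrow\beta_0$; together with diffusion estimates in the spirit of \cite{BartonEtheridgeSturm2004} and \cite{Taylor2007} (invoked as Lemma \ref{l:est2}), this forces every remaining $B$-line to leave before $\beta_0$. To compare $\xi_{\beta_0}^b$ with $\Upsilon$, I would build an explicit coupling between $\mathcal X$ on $[T_0,T]$ and the Yule process with immigration. The key observation is that both processes have per-capita birth rate $\alpha$ plus a common $\alpha\theta$ immigration rate, and the diffusion deviates from the branching approximation by terms of order $i^2/N$ while the line count is $i$, which is negligible as long as $i\ll\sqrt N$. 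Taking $n$ uniformly sampled leaves of the stopped Yule forest then yields the partition $\widetilde\Upsilon$.

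Next I would add the recombination marks branch-by-branch to recover $\Upsilon$. Along a branch existing while the Yule process has $i$ lines, the rate of leaving the beneficial background is $\rho(1-X)\approx\rho$ from recombination plus $(\theta/2)(1-X)/X$ from mutation, while the sojourn at $i$ lines has length approximately $1/(\alpha(i+\theta))$ since either a split or an immigration can increase the line count. Multiplying, the probability of \emph{no} background change during Yule step $i$ is $\exp(-\gamma/((i+\theta)\log\alpha))$ up to lower order corrections, which summed from $i_1$ to $i_2$ gives exactly the factor \eqref{eq:pii}. Careful bookkeeping of all approximation errors in this coupling, together with an explicit total variation estimate for the partition induced on the $n$ sampled leaves, should produce the $\mathcal{O}(1/(\log\alpha)^2)$ bound, with the dominant error arising from the $1/\log\alpha$ scaling of the individual mark probabilities squared via a pair of corrections.

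For part 2, once a line has escaped to the wild-type background the coalescence rate there is $1/(1-X_{T-\beta})$, which is close to $1$ throughout the establishment phase, while mutation and recombination back into the beneficial background are of order $\rho X_{T-\beta} + (\theta/2) X_{T-\beta}$ and integrate to a quantity of order $1/(\alpha\log\alpha)$; this yields the stated bound on $\mathbb P[\xi_{\geq\beta_0}^B\neq(\emptyset)_{t\geq 0}]$. Hence, conditional on $\Upsilon$ as the state at $\beta_0$, the subsequent coalescences are well approximated by an independent Kingman coalescent $\mathcal C$, and composing gives $\Upsilon\circ\mathcal C$; the additional error from replacing the slightly varying rate $1/(1-X)$ by $1$ is again $\mathcal{O}(1/(\log\alpha)^2)$. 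I expect the main obstacle to be the joint control of the total variation distance over the full coupling, especially at the boundary where $X_t\to 0$ and the per-line rates diverge: combining the near-boundary escape argument with the branching approximation and the mark construction into a single error bound of the stated order is the delicate point.
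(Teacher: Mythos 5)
Your overall architecture matches the paper's: decompose at $T_0$, prove \eqref{eq:alsu} via the a.s.\ divergence of $\int \frac{1-X_t}{X_t}\,dt$ near the boundary (the paper's Lemma \ref{l:est1a}, built on \cite{Taylor2007} and \cite{BartonEtheridgeSturm2004} -- note this escape happens through \emph{mutation}, not coalescence, since a single line cannot coalesce), use the Green-function estimates of Lemmata \ref{l:est1} and \ref{l:est2} to discard back-recombinations and wild-type coalescences during $[T_0,T]$, recover $p_{i_1}^{i_2}$ from competing exponentials, and treat $[0,T_0]$ as a Kingman phase. Your derivation of the marking probabilities and your part 2 are essentially the paper's (the spurious ``mutation back into $B$'' term is harmless since there are no back-mutations).

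The genuine gap is in the step you pass over most quickly: the coupling between the conditioned diffusion on $[T_0,T]$ and the supercritical branching process. You propose a direct coupling with error ``of order $i^2/N$, negligible as long as $i\ll\sqrt N$,'' but after the diffusion limit there is no $N$ left, and -- more importantly -- the Yule forest must be grown until the beneficial allele reaches frequency $1$, where the quadratic corrections $-\alpha X^2$ and $-X^2$ to the drift and diffusion coefficients are of order one, so the naive branching approximation does \emph{not} hold uniformly up to the stopping time. The paper resolves this with the exact random time change $d\tau=(1-X_t)\,dt$, which turns the diffusion into a genuine Feller branching process with immigration and makes this step contribute \emph{zero} total variation error (Propositions \ref{P6} and \ref{P7}); without this device (or the substantially heavier discrete-model coupling of Schweinsberg--Durrett) your claimed $\mathcal O(1/(\log\alpha)^2)$ bound is not justified. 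Relatedly, you identify $X_t\to 0$ as the delicate boundary, but that end is handled cleanly by the escape-by-mutation argument; the real difficulty sits at $X_t\to 1$. A secondary omission: the forest of lines of infinite descent has a Poisson$(2\alpha)$ number of tips when $\mathcal Y$ hits $1$, not exactly $\lfloor 2\alpha\rfloor$, and the $\mathcal O(1/(\sqrt\alpha\log\alpha))$ cost of this discrepancy must be (and in the paper is) accounted for in Proposition \ref{P8}.
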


\begin{remark}
\mbox{}
\begin{enumerate}
\item Let us give an intuitive explanation for the approximation of
  the genealogy at the selected site by $\widetilde\Upsilon$. Consider
  a finite population of size $N$. It is well-known that a
  supercritical branching process is a good approximation for the
  frequency path $\mathcal X$ at times $t$ when $X_t$ is small. In
  such a process, each individual branches at rate 1. It either splits
  in two with probability $\tfrac{1+s}{2}$ or dies with probability
  $\tfrac{1-s}{2}$. In this setting every line has a probability of
  $2s + \mathcal O(s^2)\approx 2\alpha/N$ to be of infinite
  descent. In particular, new mutants that have an infinite line of
  descent arise approximately at rate $2s \cdot Nu =
  \alpha\theta/N$. In addition, when there are $2Ns$ lines of infinite
  descent there must be approximately $N$ lines in total, which is the
  whole population.
\item Using the approximation of $\xi_{\beta_0}$ by $\Upsilon$ we can
  immediately derive a result found in \cite{PenningsHermisson2006b}:
  when the Yule process has $i$ lines the probability that the next
  event (either a split of a Yule line or an incoming mutant) is a
  split is $\frac{i}{\theta+i}$, and that it is an incoming mutant is
  $\frac{\theta}{\theta+i}$. This implies that the random forest
  $\mathcal F$ is generated by Hoppe's urn. Recall also the related
  Chinese restaurant process; see \cite{Aldous1983} and
  \cite{JoyceTavare1987}. The resulting sizes of all families is given
  by the Ewens' Sampling Formula for the $\lfloor 2\alpha\rfloor$
  lines when the Yule tree is stopped. Moreover, the Ewens' Sampling
  Formula is consistent, i.e., subsamples of a large sample again
  follow the formula.
\item When biologists screen the genome of a sample for selective
  sweeps, they can not be sure to have sampled at time $t=T$. Given
  they have sampled lines linked to the beneficial type at $t<T$ when
  the beneficial allele is already in high frequency (e.g. $X_t
  \approx 1 - \delta/\log\alpha$ for some $\delta>0$), the
  approximations of Theorem \ref{T1} still apply. The reason is that
  neither recombination events changing the genetical background nor
  coalescences occur in $[t;T]$ in $\xi$ with high probability; see
  Section \ref{rem7}. If $t>T$, a good approximation to the genealogy
  is $\widetilde{\mathcal C}\circ \Upsilon\circ\mathcal C$ where
  $\widetilde{\mathcal C}$ is a Kingman coalescent run for time $t-T$.
\item The model parameters $n,\gamma$ and $\theta$ enter the error
  terms $\mathcal O(.)$ above. The most severe error in
  \eqref{eq:bound1a} arises from ignoring events with two
  recombination events on a single line. See also Remark
  \ref{rem:5}. Hence, $\gamma$ enters the error term
  quadratically. Since each line might have a double-recombination
  history, the sample size $n$ enters this error term linearly. The
  contribution of $\theta$ to the error term cannot be seen directly
  and is a consequence of the dependence of the frequency path
  $\mathcal X$ on $\theta$.
    
  Note that coalescence events always affect pairs of lines while both
  recombination and mutation affects only single lines.  As a
  consequence, $n$ enters quadratically into higher order error
  terms. In particular, for practical purposes, the Yule process
  approximation becomes worse for big samples.
\item The proof of Theorem \ref{T1} can be found in Section
  \ref{sub:T1}. Key facts needed in the proof are collected in Section
  \ref{sub:key}.
  \end{enumerate}
\end{remark}

\subsection{Application: Expected heterozygosity}
\label{Sub:het} The approximation of Theorem 1 using a Yule forest as
a genealogy has direct consequences for the interpretation of
population genetic data. While genealogical trees cannot be observed
directly, their impact on measures of DNA sequence diversity in a
population sample can be described. The idea is that mutations along
the genealogy of a sample produce polymorphisms that can be
observed. Genealogies in the neighbourhood of a recent adaptation
event are shorter, on average, meaning that sequence diversity is
reduced. This reduction is stronger, however, for a 'hard sweep' (see
Section 2.3), where the sample finds a common ancestor during the time
of the selective phase $\mathbb E[T^\ast]\approx 2\log(\alpha)/\alpha
\ll 1$ than for a 'soft sweep', where the most recent common ancestor
is older. Using our fine asymptotics for genealogies, we are able to
quantify the prediction of sequence diversity under genetic
hitchhiking with recurrent mutation. In this section we will
concentrate on heterozygosity as the simplest measure of sequence
diversity.

By definition, heterozygosity is the probability that two randomly
picked lines from a population are different. Writing $H_t$ for the
heterozygosity at time $t$ and using \eqref{eq:alsu}, we obtain
$$ H_T = \mathbb P_{\alpha,\theta}[\xi_{\beta_0}^b=\{\{1\},\{2\}\}|\xi_0 
= (\{1\},\{2\},\emptyset)] \cdot H_{T_0}.$$ Assuming that the
population was in equilibrium at time $0$, we can use Theorem
\ref{T1}, in particular \eqref{eq:bound1a}, to obtain an approximation
for the heterozygosity at time $T$.

\begin{proposition}
  \label{P3}
  Abbreviating $p_i:=p_i^{\lfloor 2\alpha\rfloor}(\gamma,\theta)$
  (compare \eqref{eq:pii}), heterozygosity at time $T$ is approximated
  by
  \begin{align}\label{eq:P3}
    \frac{H_T}{H_{T_0}} = 1 - \frac{p_{1}^2 }{\theta+1} -
    \frac{2\gamma}{\log\alpha} \sum_{i=2}^{\lfloor 2\alpha\rfloor}
    \frac{2i+\theta}{(i+\theta)^2(i+1+\theta)}p_{i}^2 +
    \mathcal O\Big( \frac{1}{(\log\alpha)^2}\Big)
  \end{align}
  where the error is in the limit of large $\alpha$ and is uniform on
  compacta in $n, \gamma$ and $\theta$.
\end{proposition}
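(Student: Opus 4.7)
The plan is: apply Theorem~\ref{T1} at $n=2$ to replace the structured-coalescent probability by the Yule-process probability $\mathbb P[1\sim_\Upsilon 2]$ up to error $\mathcal O(1/(\log\alpha)^2)$; evaluate the latter by conditioning on the Yule-time of the MRCA in $\mathcal F$; and rearrange the resulting sum by a summation-by-parts identity that cleanly isolates the order-one term $p_1^2/(1+\theta)$ from a $2\gamma/\log\alpha$-weighted remainder. From $H_T=\mathbb P_{\alpha,\theta}[\xi^b_{\beta_0}=\{\{1\},\{2\}\}]\cdot H_{T_0}$ and \eqref{eq:bound1a},
\[\frac{H_T}{H_{T_0}}=1-\mathbb P[1\sim_\Upsilon 2]+\mathcal O\bigl(1/(\log\alpha)^2\bigr),\]
and the whole problem is reduced to an explicit calculation inside the Yule-with-immigration model.

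Set $N:=\lfloor 2\alpha\rfloor$ and let $q_i$ be the probability that the two sampled lines share their MRCA in $\mathcal F$ at Yule-time $i$. Conditional on this event, the subtree of the two leaves consists of exactly two branches, both running from Yule-time $i$ to $N$, and by independence of the marking each is unmarked with probability $p_i$; hence $\mathbb P[1\sim_\Upsilon 2]=\sum_{i=1}^{N-1}q_ip_i^2$. Tracing both lineages backward, a short exchangeability computation identifies the backward coalescence and separation probabilities at step $k\to k-1$ as $\tfrac{2}{k(k-1+\theta)}$ and $\tfrac{2\theta}{k(k-1+\theta)}$. The surviving factor telescopes via $1-\tfrac{2(1+\theta)}{k(k-1+\theta)}=\tfrac{(k-2)(k+1+\theta)}{k(k-1+\theta)}$ to yield
\[q_i=\frac{2i(N+\theta)(N+1+\theta)}{N(N-1)(i+\theta)(i+1+\theta)(i+2+\theta)}=\frac{2i}{(i+\theta)(i+1+\theta)(i+2+\theta)}+\mathcal O(1/\alpha).\]

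The last step is algebraic. A direct computation gives the telescoping primitive
\[\frac{2i}{(i+\theta)(i+1+\theta)(i+2+\theta)}=g(i)-g(i+1),\quad g(i):=\frac{2i+\theta}{(i+\theta)(i+1+\theta)},\]
with $g(1)=1/(1+\theta)$ and $g(N)=\mathcal O(1/\alpha)$. Abel summation applied to $\sum_{i=1}^{N-1}(g(i)-g(i+1))p_i^2$ produces the boundary contribution $g(1)p_1^2=p_1^2/(1+\theta)$ and the bulk contribution $\sum_{i=2}^{N-1}g(i)(p_i^2-p_{i-1}^2)$. From \eqref{eq:pii}, $p_{i-1}^2=p_i^2\exp(-2\gamma/((i+\theta)\log\alpha))$, so $p_i^2-p_{i-1}^2=p_i^2\cdot\tfrac{2\gamma}{(i+\theta)\log\alpha}+\mathcal O(\gamma^2/((i+\theta)^2(\log\alpha)^2))$, and the identity $g(i)/(i+\theta)=(2i+\theta)/((i+\theta)^2(i+1+\theta))$ turns the bulk contribution into precisely the $(2\gamma/\log\alpha)$-weighted sum appearing in \eqref{eq:P3}. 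The omitted term at $i=N$ contributes only $\mathcal O(1/(\alpha^2\log\alpha))$, so the upper summation limit can be taken to be $\lfloor 2\alpha\rfloor$ as stated.

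The main obstacle is recognising the primitive $g$ that makes $\sum q_i p_i^2$ telescope in the right way; once this identity is in hand, the rest is careful bookkeeping to check that the $\mathcal O(1/\alpha)$ error from approximating $q_i$, the $\mathcal O(\gamma^2/(\log\alpha)^2)$ error from expanding the exponential, and the Theorem~\ref{T1} error all combine into the claimed $\mathcal O(1/(\log\alpha)^2)$ uniformly on compacta in $\gamma$ and $\theta$.
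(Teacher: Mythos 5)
Your proposal is correct and follows essentially the same route as the paper: reduce to $\mathbb P[1\sim_\Upsilon 2]$ via Theorem \ref{T1}, condition on the Yule-time of the MRCA to get the weight $q_i\approx \tfrac{2i}{(i+\theta)(i+1+\theta)(i+2+\theta)}$ times $p_i^2$, and extract the leading term by summation by parts together with the expansion of $p_{i}^2-p_{i-1}^2$. The only (cosmetic) differences are that you compute $q_i$ by a backward exchangeability argument rather than the paper's forward product over Yule times, and that you identify the telescoping primitive $g(i)=\tfrac{2i+\theta}{(i+\theta)(i+1+\theta)}$ directly instead of passing through the paper's partial-fraction decomposition; both yield identical expressions.
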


\begin{figure}
\begin{center}
\includegraphics[width=13.5cm]{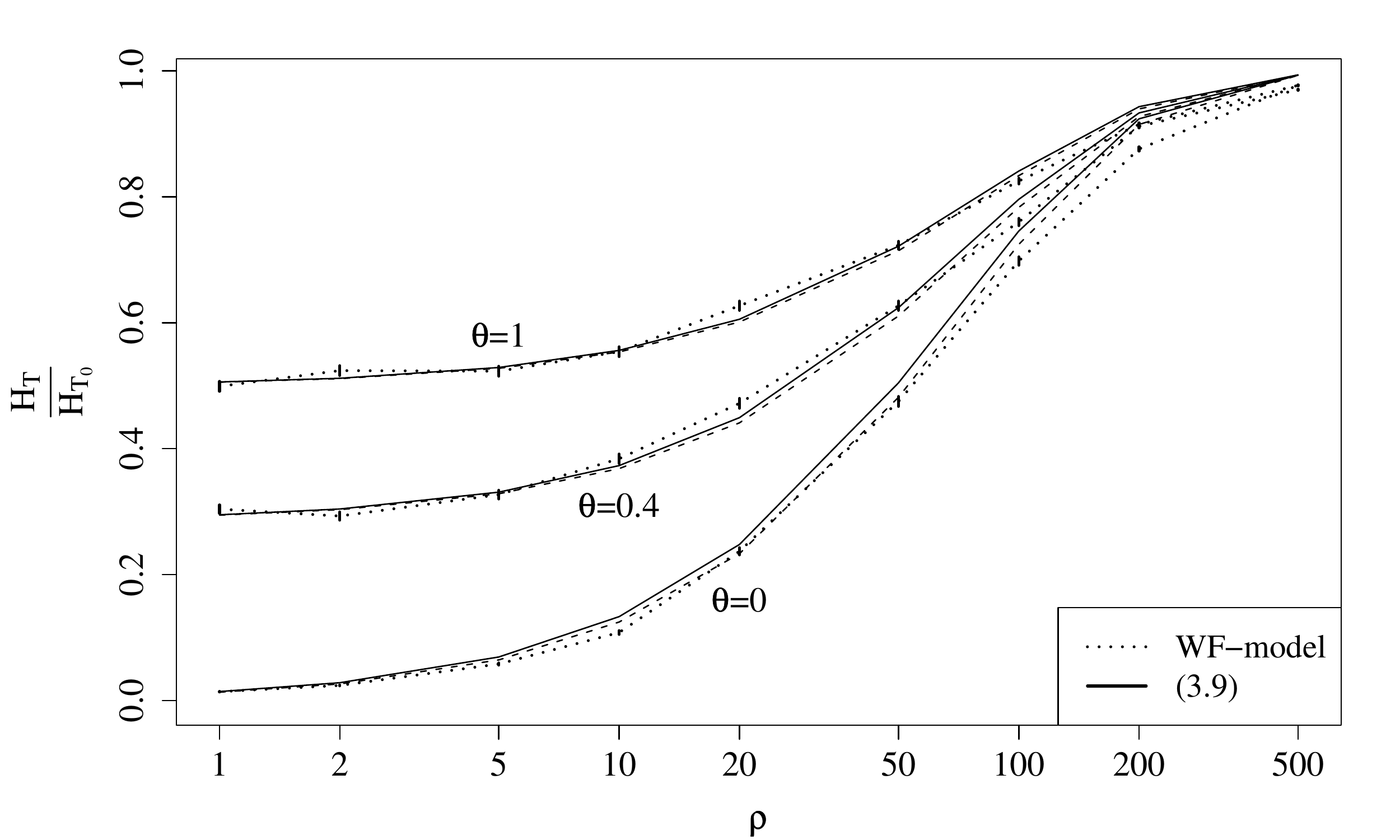}
\end{center}
\caption{\label{fig:sim} Reduction in heterozygosity at time of
  fixation of the beneficial allele. The $x$-axis shows the
  recombination distance of the selected from the neutral locus. Solid
  lines connect results from the analytical approximation. Dotted
  lines show simulation results of a structured coalescent in a
  Wright-Fisher model with $N = 10^4$ and $\alpha = 1000$. Small
  vertical bars indicate standard errors from $10^3$ numerical
  iterations.}
\end{figure}

\begin{remark}
\mbox{}

  \begin{enumerate}
  \item The formula \eqref{eq:P3} establishes that
    \begin{align}\label{eq:P3a}
      \frac{H_T}{H_{T_0}} = 1 - \frac{p_{1}^2 }{\theta+1} + \mathcal
      O\Big( \frac{1}{\log\alpha}\Big).
    \end{align}
    In particular, to a first approximation, two lines taken from the
    population at time $T$ are identical by descent if their linked
    selected locus has the same origin (probability $\frac
    1{1+\theta}$) and if both lines were not hit by independent
    recombination events (probability $p_1^2$).
  \item We investigated the quality of the approximation \eqref{eq:P3}
    by numerical simulations. The outcome can be seen in Figure
    \ref{fig:sim}. As we see, for $\alpha=1000$, our approximation
    works well for all values of $\theta\leq 1$ up to
    $\rho/\alpha=0.1$, i.e., $\gamma=0.7$.
  \item We can compare Proposition \ref{P3} with the result for the
    heterozygosity under a star-like approximation for the genealogy
    at the selected site, which was used by
    \citet[eq. (8)]{PenningsHermisson2006b}, i.e.
    \begin{align}\label{eq:P3PH}
      \frac{H_T}{H_{T_0}} \approx 1 - \frac{e^{-2\gamma} }{\theta+1}.
    \end{align}
    Note that this formula also arises approximately by taking
    $p_1^{\lfloor 2\alpha\rfloor}(\gamma,0)$ instead of $p_1$ in
    \eqref{eq:P3a}.  As shown in Table \ref{tab:num}, the additional
    terms from the Yule process approximation lead to an improvement
    over the simple star-like approximation result.

    \begin{table}
      \begin{center}
        \begin{tabular}{|c|cccc|} \hline & $\theta= 0,\rho=2$ &
          $\theta= 0,\rho=5$ & $\theta=0, \rho=10$ & $\theta=
          0,\rho=50$ \\\hline
          WF-model &0.024&0.058&0.108&0.475\\
          \eqref{eq:P3} &0.028(17\%)&0.069(19\%)&0.133(23\%)&0.504(6\%)\\
          \eqref{eq:P3PH} &0.032(33\%)&0.079(36\%)&0.151(40\%)&0.559(18\%)\\
          \hline \hline & $\theta= 0.1,\rho=2$ & $\theta= 0.1,\rho=5$
          & $\theta=0.1, \rho=10$ & $\theta= 0.1,\rho=50$ \\\hline
          WF-model &0.112&0.153&0.223&0.507\\
          \eqref{eq:P3} &0.116(4\%)&0.152(1\%)&0.209(6\%)&0.541(7\%)\\
          \eqref{eq:P3PH} &0.12(7\%)&0.162(6\%)&0.228(2\%)&0.599(18\%)\\
          \hline \hline & $\theta= 1,\rho=2$ & $\theta= 1,\rho=5$ &
          $\theta=1, \rho=10$ & $\theta= 1,\rho=50$ \\\hline
          WF-model &0.524&0.523&0.554&0.723\\
          \eqref{eq:P3} &0.512(2\%)&0.529(1\%)&0.556(0\%)&0.722(0\%)\\
          \eqref{eq:P3PH} &0.516(2\%)&0.539(3\%)&0.575(4\%)&0.779(8\%)\\
          \hline
        \end{tabular}
      \end{center}
      \caption{\label{tab:num}Comparison of numerical simulation of a Wright-Fisher 
        model to \eqref{eq:P3} and \eqref{eq:P3PH}. Numbers in brackets are the 
        relative error of the approximation. For $\theta=0$ and $\theta=1$, 
        the same set of simulations as in Figure 
        \ref{fig:sim} are used. In particular, $N=10^4$ and $\alpha=1000$.}
    \end{table}

  \item The quantification of sequence diversity patterns for
    selective sweeps with recurrent mutation using the Yule process
    approximation is not restricted to heterozygosity. Properties of
    several other statistics could be computed. As an example, we
    mention the site frequency spectrum, which describes the number of
    singleton, doubleton, tripleton, etc, mutations in the sample.

    Moreover, as pointed out by Pennings and Hermisson (2006b),
    selective sweeps with recurrent mutation also lead to a distinct
    haplotype pattern around the selected site. Intuitively, every
    beneficial mutant at the selected site brings along its own
    genetic background leading to several extended
    haplotypes. Quantifying such haplotypes patterns would require
    models for more than one neutral locus.
  \end{enumerate}
\end{remark}

\begin{proof}[Proof of Proposition \ref{P3}]
  Using Theorem \ref{T1} we have to establish that $\mathbb
  P_{\alpha,\theta}[\xi_{\beta_0}^b=\{\{1,2\}\}|\xi_0 =
  (\{1\},\{2\},\emptyset)]$ is approximately given by the right hand
  side of \eqref{P3}. To see this, we compute, accounting for all
  possibilities when coalescence of two lines can occur,
\begin{align*}
  \mathbb P[\Upsilon=\{\{1,2\}\}] & = \sum_{i=1}^{\lfloor
    2\alpha\rfloor} \frac{i}{i+\theta}\frac{1}{\binom{i+1}{2}} p_i^2
  \cdot \prod_{j=i+1}^{\lfloor 2\alpha\rfloor }
  \Big(\frac{\theta}{j+\theta} \Big( 1 - \frac{2}{j+1}\Big) +
  \frac{j}{j+\theta} \Big( 1 - \frac{1}{\binom{j+1}{2}}\Big)\Big) \\ &
  = \sum_{i=1}^{\lfloor 2\alpha\rfloor }
  \frac{2p_i^2}{(i+\theta)(i+1)} \cdot \prod_{j=i+1}^{\lfloor
    2\alpha\rfloor } \Big( \theta \frac{j-1}{(j+\theta)(j+1)} +
  \frac{(j-1)(j+2)}{(j+\theta)(j+1)}\Big)\\
  & = \sum_{i=1}^{\lfloor 2\alpha\rfloor }
  \frac{2p_i^2}{(i+\theta)(i+1)} \cdot \prod_{j=i+1}^{\lfloor
    2\alpha\rfloor } \frac{j-1}{j+1}\frac{j+2+\theta}{j+\theta} \\ & =
  \sum_{i=1}^{\lfloor 2\alpha\lfloor } \frac{2p_i^2}{i+\theta}
  \frac{i}{(i+1+\theta)(i+2+\theta)} + \sO\Big(\frac 1 \alpha\Big)
  \\
  & = \sum_{i=1}^{\lfloor 2\alpha\rfloor} \Big(
  \frac{2}{(i+1+\theta)(i+2+\theta)} -
  \frac{2\theta}{(i+\theta)(i+1+\theta)(i+2+\theta)}\Big) p_{i}^2 +
  \sO\Big(\frac 1\alpha\Big).
\end{align*}
Rewriting gives
\begin{align*}
  \mathbb P[\Upsilon=\{\{1,2\}\}] & = \sum_{i=1}^{\lfloor
    2\alpha\rfloor } \Big( \frac{2p_{i}^2}{i+1+\theta} -
  \frac{2p_{i}^2}{i+2+\theta}\Big) \\ & \qquad \qquad \qquad \qquad -
  \theta\sum_{i=1}^{\lfloor 2\alpha\rfloor } \Big(
  \frac{p_{i}^2}{(i+\theta)(i+1+\theta)}-
  \frac{p_{i}^2}{(i+1+\theta)(i+2+\theta)} \Big) + \sO\Big( \frac
  1\alpha \Big) \\ & = \frac{2p_{1}^2}{\theta+2} + \sum_{i=1}^{\lfloor
    2\alpha\rfloor } \frac{2(p_{i+1}^2-p_{i}^2)}{i+2+\theta} \\ &
  \qquad \qquad \qquad \qquad -
  \theta\frac{p_{1}^2}{(\theta+1)(\theta+2)} -
  \theta\sum_{i=1}^{\lfloor 2\alpha \rfloor}
  \frac{p_{i+1}^2-p_{i}^2}{(i+1+\theta)(i+2+\theta)}
  + \sO\Big(\frac 1\alpha\Big) \\
  & = \frac{p_{1}^2}{\theta+1} + \sum_{i=1}^{\lfloor 2\alpha\rfloor }
  \frac{2i+\theta+2}{(i+1+\theta)(i+2+\theta)}(p_{i+1}^2-p_{i}^2) +
  \sO\Big(\frac 1\alpha\Big) \\ & = \frac{p_{1}^2}{\theta+1} +
  \frac{2\gamma}{\log\alpha} \sum_{i=2}^{\lfloor 2\alpha\rfloor}
  p_i^2\frac{2i+\theta}{(i+\theta)^2(i+1+\theta)} + \sO\Big(\frac
  1{(\log\alpha)^2}\Big)
\end{align*}
where the last equality follows from
$$ p_{i+1}^2-p_{i}^2 = p_{i+1}^2
\Big( 1 - \exp\Big(-\frac{2\gamma}{\log\alpha}\frac 1{i+1+\theta}
\Big)\Big) = p_{i+1}^2 \frac{2\gamma}{\log\alpha}\frac 1{i+1+\theta} +
\frac 1{i^2}\sO\Big(\frac{1}{(\log\alpha)^2}\Big).$$
\end{proof}

\section{Proof of Proposition \ref{P1} (Fixation times)}
\label{sub:P1}
Our calculations are based on the Green function $t(.;.)$ for the
diffusion $\mathcal X=(X_t)_{t\geq 0}$. This function satisfies
\begin{align}\label{eq:green1}
  \mathbb E_{\alpha,\theta}^p\Big[ \int_{0}^T f(X_t) dt\Big] =
  \int_0^1 t(x;p)f(x) dx
\end{align}
and
\begin{align}
  \label{eq:green2}
  \mathbb E_{\alpha,\theta}^p\Big[ \int_{0}^T \int_{t}^T f(X_t) g(X_s)
  ds dt\Big] = \int_0^1 \int_0^1 t(x;p) t(y;x) f(x) g(y) dy dx.
\end{align}
Using
\begin{equation*}
  \begin{aligned}
    \psi_{\alpha,\theta}(y) := \psi(y) := \exp\Big( -2\int_1^y
    \frac{\mu_{\alpha,\theta}(z)}{\sigma^2(z)} dz \Big) =
    \frac{1}{y^{\theta}} \exp(2\alpha(1-y))
  \end{aligned}
\end{equation*}
the Green function for $\mathcal X$, started in $p$, is given by
(compare \cite{Ewens2004}, (4.40), (4.41))
\begin{equation*}
  \begin{aligned}
    t_{\alpha,\theta}(x;p) & = \frac{2}{\sigma^2(x)\psi(x)}\int_{x\vee
      p}^1 \psi(y) dy  =
    \frac{2}{x(1-x)} \int_{x\vee p}^1
    e^{-2\alpha (y-x)} \Big(\frac{x}{y}\Big)^{\theta}dy.
  \end{aligned}
\end{equation*}
Since $T^\ast$ depends only on the path conditioned not to hit 0, we
need the Green function of the conditioned diffusion. To derive its
infinitesimal characteristics, we need the absorption probability,
i.e., given a current frequency of $p$ of the beneficial allele, its
probability of absorption at 1 before hitting 0. This probability is
given by
\begin{equation*}
  \begin{aligned}
    P^1_{\alpha,\theta}(p) & = \frac{\int_0^p \psi(y) dy}{\int_0^1
      \psi(y)dy} = \frac{\int_0^p \frac{e^{-2\alpha
          y}}{y^{\theta}}dy}{\int_0^1 \frac{e^{-2\alpha
          y}}{y^{\theta}}dy}
  \end{aligned}
\end{equation*}
for $\theta<1$. For $\theta\geq 1$, we have $P^1_{\alpha,\theta}=1$,
i.e., 0 is an inaccessible boundary. In the case $\theta<1$, the Green
function of the conditioned process is for $p\leq x$ (compare
\cite{Ewens2004}, (4.50))
\begin{equation*}
  \begin{aligned}
    t^\ast_{\alpha,\theta}(x;p) = P^1_{\alpha,\theta}(x) \cdot
    t_{\alpha,\theta}(x;p)
  \end{aligned}
\end{equation*}
and for $x\leq p$ (see \cite{Ewens2004}, (4.49))
\begin{equation*}
  \begin{aligned}
    t^\ast_{\alpha,\theta}(x;p) & = \frac{2}{\sigma^2(x)\psi(x)}
    \frac{(1-P^1_{\alpha,\theta}(x))P^1_{\alpha,\theta}(x)}{P^1_{\alpha,\theta}(p)}
    \int_0^x \psi(y) dy \\ & = 2 \frac{1}{\sigma^2(x)\psi(x)}
    \frac{\int_p^1 \psi(y) dy \int_0^x \psi(y)dy \int_0^x \psi(y)
      dy}{\int_0^p \psi(y) dy \int_0^1 \psi(y) dy}.
  \end{aligned}
\end{equation*}


~

Before we prove Proposition \ref{P1} we give some useful estimates.

\begin{lemma}
\label{L1a}
\begin{enumerate}
\item
For $\varepsilon,K\in (0;\infty)$ there exists $C\in\mathbb R$ such that
\begin{align}\label{L1a:1a}
  \sup_{\varepsilon\leq x\leq 1, 0\leq \theta\leq K}
  \Big|\frac{1-x^\theta}{\theta(1-x)}\Big| \leq C.
\end{align} 
\item For $\theta\in[0;1)$,
\begin{equation}\label{eq:Gamm1}
  \begin{aligned}
    \int_0^1 z^{-\theta} e^{-2\alpha z}dz =
    \frac{1}{2\alpha^{1-\theta}} \Gamma(1-\theta) + \mathcal
    O(e^{-2\alpha})
  \end{aligned}
\end{equation}
where $\Gamma(.)$ is the Gamma function.
\item The bounds
\begin{align}
  \label{L1a:3b}
  \int_0^1 x^{\theta-1} e^{-2\alpha(1-x)} dx & = \mathcal O\Big( \frac
  1\alpha\Big) + \frac 1\theta \mathcal O\big( \alpha e^{-\alpha}\big) ,\\
  \label{L1a:3a}
  \int_0^1 \frac{1-e^{-2\alpha x}}{x} dx - \log2\alpha + \gamma_e & =
  \mathcal O\Big(\frac 1\alpha\Big), \\
  \label{L1a:3c}
  \int_0^1 \frac{1-x^\theta}{1-x} e^{-2\alpha(1-x)} dx & = \mathcal O
  \Big( \frac 1\alpha\Big),\\
  \label{L1a:3d}
  \int_0^1 \int_0^{y/2} \frac 1{1-x} \Big( \frac xy\Big)^\theta
  e^{-2\alpha(y-x)} dx dy &= \mathcal O\Big( \frac{1}{\alpha^2}\Big)
\end{align}
hold in the limit of large $\alpha$, and uniformly on compacta in
$\theta$.
\end{enumerate}

\end{lemma}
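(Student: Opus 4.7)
All four estimates are purely elementary: in each case one localizes the integral to a ``bulk'' region where the integrand is tame and absorbs the remainder into an exponentially small tail coming from the $e^{-\alpha}$ or $e^{-2\alpha}$ factor. Uniformity in $\theta$ on compacta will be evident from the explicit constants produced at each step.

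For (i), I would write
\[
\frac{1-x^\theta}{\theta(1-x)} \;=\; \frac{1}{1-x}\int_x^1 y^{\theta-1}\,dy,
\]
which is simply the average of $y^{\theta-1}$ over $[x,1]$; since $y\in[\varepsilon,1]$ and $0\leq\theta\leq K$, this average is bounded by a constant depending only on $\varepsilon$ and $K$. For (ii), the substitution $u=2\alpha z$ converts the integral to $(2\alpha)^{\theta-1}\int_0^{2\alpha}u^{-\theta}e^{-u}\,du$; the tail $\int_{2\alpha}^\infty u^{-\theta}e^{-u}\,du$ is $\mathcal{O}(e^{-2\alpha})$ (uniformly for $\theta$ in a compact subset of $[0,1)$), and $\int_0^\infty u^{-\theta}e^{-u}\,du = \Gamma(1-\theta)$ yields the claimed leading term.

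For (iii)--(vi) the common strategy is to substitute $u=1-x$ (or $u=2\alpha x$) and to split the resulting $u$-integral at $u=\tfrac{1}{2}$. On $[0,\tfrac{1}{2}]$ the integrand is bounded uniformly (by direct inspection, or by part (i) applied to $\frac{1-(1-u)^\theta}{u}$), and $\int_0^{1/2}e^{-2\alpha u}\,du = \mathcal{O}(1/\alpha)$ supplies the leading error. On $[\tfrac{1}{2},1]$ the factor $e^{-2\alpha u}\leq e^{-\alpha}$ kills everything, including any $1/\theta$ singularity coming from $\int_0^{1/2}v^{\theta-1}\,dv=(1/2)^\theta/\theta$, which is precisely the source of the second summand in (iii). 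For (iv) I would first use $u=2\alpha x$ and then the standard exponential-integral identity
\[
\int_0^N \frac{1-e^{-u}}{u}\,du \;=\; \gamma_e + \log N + \mathcal{O}(e^{-N}\log N),
\]
proved by integration by parts together with $\int_0^\infty e^{-u}\log u\,du = -\gamma_e$; specializing to $N=2\alpha$ gives the claim. For (vi), the constraint $x\leq y/2$ makes the argument almost trivial: one has $y-x\geq y/2$, $(x/y)^\theta\leq 1$, and $1/(1-x)\leq 2$, so the double integral is dominated by $\int_0^1 y\,e^{-\alpha y}\,dy = \mathcal{O}(1/\alpha^2)$.

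No real obstacle arises here; every integrand is explicit and every error region is killed by a single exponential weight. The only care required is tracking the behavior as $\theta\to 0$ in (iii) and (v), which is exactly where part (i) of the lemma is used and which explains the separate $\tfrac{1}{\theta}$ term appearing in (iii).
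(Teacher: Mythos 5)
Your proposal is correct and proves every part of the lemma; the overall strategy --- localize each integral to a bulk region where the integrand admits explicit pointwise bounds and absorb the rest into an exponentially small tail --- is the same as the paper's, but several of the individual devices differ. For part 1 the paper Taylor-expands $x\mapsto x^\theta$ at $x=1$ with a Lagrange remainder, while you write $\frac{1-x^\theta}{\theta(1-x)}$ as the average of $y^{\theta-1}$ over $[x,1]$; both are one-line arguments, and yours extends more transparently to $\theta=0$. For \eqref{L1a:3b} the paper integrates by parts and then reuses part 1, which is where the stated remainder $\frac 1\theta\mathcal O(\alpha e^{-\alpha})$ comes from; your direct split at $x=\tfrac 12$ yields the sharper remainder $\frac 1\theta \mathcal O(e^{-\alpha})$, which of course implies the stated bound. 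For \eqref{L1a:3c} the paper bounds $1-x^\theta\le 1-x^{\lceil\theta\rceil}$ and integrates a finite geometric sum, where you instead invoke part 1 on $[\tfrac 12,1]$; for \eqref{L1a:3a} the paper cites a handbook where you sketch the standard exponential-integral identity; parts 2 and \eqref{L1a:3d} are argued essentially identically. One remark: the identity you quote carries $+\gamma_e$, i.e.\ $\int_0^N\frac{1-e^{-u}}{u}\,du=\log N+\gamma_e+\mathcal O(\cdot)$, which is the correct sign and the one actually used downstream in the proof of Proposition \ref{P1}; the display \eqref{L1a:3a} as printed contains a sign typo (the $+\gamma_e$ on its left-hand side should read $-\gamma_e$), so your computation establishes the intended statement.
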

\begin{proof}
\begin{enumerate}
\item By a Taylor approximation of $x\mapsto x^\theta$ around $x=1$ we
  obtain
$$ x^\theta = 1 + \theta(1-x) + \tfrac{\theta(\theta-1)}{2} \xi^{\theta-2}(1-x)^2$$
for some $x\leq \xi\leq 1$ and the result follows.
\item We simply compute
  \begin{equation}\label{eq:log1}
    \begin{aligned}
      \int_0^1 z^{-\theta} e^{-2\alpha z}dz =
      \frac{1}{(2\alpha)^{1-\theta}} \int_0^{2\alpha} e^{-z}
      z^{-\theta} dz = \frac{1}{(2\alpha)^{1-\theta}} \Gamma(1-\theta)
      + \mathcal O(e^{-2\alpha})
    \end{aligned}
  \end{equation}
\item For \eqref{L1a:3b}, we write
  \begin{equation*}
    \begin{aligned}
      \int_0^1 x^{\theta-1} e^{-2\alpha(1-x)} dx &= \frac 1\theta
      x^\theta e^{-2\alpha(1-x)}\Big|_0^1 +
      \frac{2\alpha}\theta\int_0^1 x^\theta e^{-2\alpha(1-x)} dx \\ &
      = \frac 1\theta - \frac {2\alpha}\theta \int_0^1 e^{-2\alpha x}
      dx + \mathcal O\Big( \tfrac {2\alpha}\theta e^{-\alpha} +
      2\alpha \int_0^1 (1-x) e^{-2\alpha(1-x)} dx\Big) \\ & = \mathcal
      O\Big( \frac 1\alpha\Big) + \frac 1 \theta \mathcal O\big(
      \alpha e^{-\alpha}\big)
    \end{aligned}
  \end{equation*}
  where we have used 1. for $\varepsilon=\tfrac 12$. For
  \eqref{L1a:3a}, see \cite[p. 61]{Bronstein1982}.  Equation
  \eqref{L1a:3c} follows from
  \begin{align*}
    \int_0^1 \frac{1-x^\theta}{1-x} e^{-2\alpha(1-x)} dx & \leq
    \int_0^1 \frac{1-x^{\lceil \theta\rceil}}{1-x} e^{-2\alpha(1-x)}
    dx = \sum_{i=0}^{\lceil \theta \rceil} \int_0^1 x^i
    e^{-2\alpha(1-x)}dx \leq \frac{\lceil\theta\rceil}{2\alpha}
  \end{align*}
  and \eqref{L1a:3d} from 
  \begin{align*}
    \int_0^1 \int_0^{y/2} \frac 1{1-x} \Big( \frac xy\Big)^\theta
    e^{-2\alpha(y-x)} dx dy & \leq 2\int_0^1\int_0^{y/2}
    e^{-2\alpha(y-x)}dx dy \\ & = \frac{1}{2\alpha} \int_0^1
    e^{-2\alpha y} - e^{-2\alpha y/2} dy = \mathcal O\Big(
    \frac{1}{\alpha^2}\Big).
  \end{align*}
\end{enumerate}
\end{proof}

\begin{lemma}
  \label{l:fixP}
  Let $2\alpha\geq 1$. There is $C>0$ such that for all
  $\theta\in[0;1]$ and $x\in[0;1]$
  $$ P^1_{\alpha,\theta}(x) \leq \big( C(2\alpha x)^{1-\theta}\big) \wedge 1. $$
\end{lemma}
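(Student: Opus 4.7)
The plan is to reduce the bound to a clean form by changing variables in the defining integral of $P^1_{\alpha,\theta}$, then to estimate numerator and denominator separately, being careful that the factor $1/(1-\theta)$ that blows up as $\theta \uparrow 1$ cancels between the two.

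First I would substitute $z = 2\alpha y$ to rewrite, for $\theta < 1$,
$$P^1_{\alpha,\theta}(x) = \frac{\int_0^{2\alpha x} z^{-\theta} e^{-z}\, dz}{\int_0^{2\alpha} z^{-\theta} e^{-z}\, dz},$$
so that both the ``target'' quantity $(2\alpha x)^{1-\theta}$ and the hypothesis $2\alpha \geq 1$ live on the $z$-scale in a natural way. The case $\theta = 1$ (where $P^1 = 1$ by definition and the right-hand side of the claim is $\geq 1$) is trivial and handled separately.

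Next I would lower-bound the denominator using $2\alpha \geq 1$:
$$\int_0^{2\alpha} z^{-\theta}e^{-z}\,dz \;\geq\; \int_0^1 z^{-\theta}e^{-z}\,dz \;\geq\; e^{-1}\int_0^1 z^{-\theta}\,dz \;=\; \frac{e^{-1}}{1-\theta}.$$
Then I split the estimate of $P^1_{\alpha,\theta}(x)$ into two regimes according to the size of $2\alpha x$. If $2\alpha x \leq 1$, the numerator is bounded by
$$\int_0^{2\alpha x} z^{-\theta} e^{-z}\,dz \;\leq\; \int_0^{2\alpha x} z^{-\theta}\,dz \;=\; \frac{(2\alpha x)^{1-\theta}}{1-\theta},$$
and combining with the denominator bound the factors $1/(1-\theta)$ cancel, yielding $P^1_{\alpha,\theta}(x) \leq e\,(2\alpha x)^{1-\theta}$. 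If instead $2\alpha x > 1$, then for $\theta \leq 1$ we have $(2\alpha x)^{1-\theta} \geq 1$, so the right-hand side of the claim reduces to $1$, and the trivial bound $P^1_{\alpha,\theta}(x) \leq 1$ suffices.

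Putting the two regimes together gives the lemma with $C = e$. The only subtlety is the cancellation of the singularity at $\theta = 1$: both integrals behave like $(1-\theta)^{-1}$ there, and making sure this cancellation is exhibited cleanly (rather than appealing to $\Gamma(1-\theta)$, which blows up) is what motivated using the very crude bound $e^{-1}\int_0^1 z^{-\theta}dz$ for the denominator instead of, say, Lemma \ref{L1a}(2). This is the only real obstacle; after that the proof is routine.
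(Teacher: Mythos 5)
Your proposal is correct and follows essentially the same route as the paper: after the substitution $z=2\alpha y$, the paper also bounds the numerator by $\int_0^{2\alpha x}z^{-\theta}dz$, lower-bounds the denominator by $e^{-1}\int_0^1 z^{-\theta}dz$ using $2\alpha\geq 1$, lets the $1/(1-\theta)$ factors cancel to get $C=e$, and invokes $P^1_{\alpha,\theta}(x)\leq 1$ for the $\wedge\,1$ part. Your split into the regimes $2\alpha x\leq 1$ and $2\alpha x>1$ is harmless but unnecessary, since the numerator bound already holds for all $x\in[0;1]$.
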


\begin{proof}
  By a direct calculation, we find
  \begin{align*}
    P^1_{\alpha,\theta}(x) & = \frac{\int_0^{2\alpha x}
      \frac{e^{-y}}{y^{\theta}}dy}{\int_0^{2\alpha}
      \frac{e^{-y}}{y^{\theta}}dy} \leq \frac{\int_0^{2\alpha x}
      y^{-\theta}}{\int_0^1 \frac{e^{-1}}{y^\theta}} = e \cdot
    (2\alpha x)^{1-\theta}
  \end{align*}
  Moreover, since $P^1_{\alpha,\theta}(x)$ is a probability, the bound
  $P^1_{\alpha,\theta}(x)\leq 1$ is obvious.
\end{proof}

\begin{proof}[Proof of Proposition \ref{P1}]
  We start with the proof of \eqref{eq:P1a}, i.e., we set $f=1$ in
  \eqref{eq:green1}.  We split the integral of $\mathbb
  E_{\alpha,\theta}[T]$ by using $\tfrac 1{x(1-x)} = \tfrac 1x +
  \tfrac 1{1-x}$, i.e.,
\begin{equation*}
\begin{aligned}
  \mathbb E_{\alpha,\theta}[T] & = 2\int_0^1 \int_0^y \frac 1x \Big(
  \frac xy\Big)^\theta e^{-2\alpha(y-x)} dx dy + 2\int_0^1 \int_0^y
  \frac 1{1-x} \Big( \frac xy\Big)^\theta e^{-2\alpha(y-x)} dx dy.
\end{aligned}
\end{equation*}
For the first part,
\begin{align*}
  2\int_0^1 & \int_0^y \frac 1x \Big( \frac xy\Big)^\theta
  e^{-2\alpha(y-x)} dx dy \stackrel{x\to x/y}{=} 2\int_0^1 \int_0^1
  x^{\theta-1} e^{-2\alpha y(1-x)} dy dx \\ & = \frac 1{\alpha} \int_0^1
  x^{\theta-1} \frac 1{1-x} \big( 1 - e^{-{2\alpha}(1-x)}\big) dx \\ & =
  \frac 1{\alpha} \int_0^1 \Big( x^{\theta-1} +
  \frac{x^\theta}{1-x}\Big)\big( 1 - e^{-{2\alpha}(1-x)}\big) dx \\ &
  =\frac1{\alpha}\Big( \frac 1\theta - \int_0^1
  \frac{1-x^\theta}{1-x}(1-e^{-{2\alpha}(1-x)})dx + \int_0^1
  \frac{1-e^{-{2\alpha}(1-x)}}{1-x}dx \Big) + \mathcal O\Big( \frac
  1{\alpha^2}\Big) + \frac 1\theta \mathcal O\big( \alpha
  e^{-\alpha}\big)\\ & = \frac 1{\alpha}\Big( \frac 1\theta -
  \sum_{n=0}^\infty \int_0^1 (x^n - x^{n+\theta})dx + \log({2\alpha}) +
  \gamma_e \Big) + \mathcal O\Big( \frac 1{\alpha^2}\Big) + \frac
  1\theta \mathcal O\big( {2\alpha} e^{-{\alpha}}\big)\\ & = \frac
  1{\alpha}\Big( \frac 1\theta - \theta \sum_{n=1}^\infty \frac{1}{n( n
    + \theta )} + \log({2\alpha}) + \gamma_e \Big) + \mathcal O\Big( \frac
  1{\alpha^2}\Big)+ \frac 1\theta \mathcal O\big( \alpha
  e^{-{\alpha}}\big).
\end{align*}
where we have used \eqref{L1a:3b} in the fourth and both,
\eqref{L1a:3a} and \eqref{L1a:3c} in the fifth equality.  The second
part gives, using \eqref{L1a:3d} and \eqref{L1a:1a},
\begin{equation}\label{eq:pp411}
\begin{aligned}
  2\int_0^1 \int_0^y & \frac 1{1-x} \Big( \frac xy\Big)^\theta
  e^{-{2\alpha}(y-x)} dx dy = 2\int_0^1 \int_0^{y/2} \frac 1{1-x}
  \Big( \frac xy\Big)^\theta e^{-{2\alpha}(y-x)} dx dy \\ & \qquad
  \qquad \qquad \qquad \qquad \qquad \qquad \qquad + 2\int_0^1
  \int_0^{y/2} \frac{1}{1-y+x}\Big( 1 - \frac xy \Big)^\theta
  e^{-{2\alpha} x} dx dy \\ & = 2\int_0^1 \int_0^y \frac 1{1-y+x}
  e^{-{2\alpha} x} dx dy + \mathcal O\Big( \int_0^1 \int_0^y
  \frac{x}{1+x} \Big( \frac{1}{1-y+x} + \frac{1}{y}\Big) e^{-{2\alpha}
    x} dx dy\Big) + \mathcal O\Big( \frac{1}{\alpha^2}\Big) \\ & =
  2\int_0^1 \int_0^y \frac{1}{1-x} e^{-{2\alpha}(y-x)} dy dx +
  \mathcal O\Big( \int_0^1 x \log\Big(
  \frac{y}{1-y+x}\Big)\Big|_{y=x}^{y=1} e^{-{2\alpha} x} dx\Big) +
  \mathcal O\Big( \frac{1}{\alpha^2}\Big) \\ & = \frac 1{\alpha}
  \int_0^1 \frac{1-e^{-{2\alpha}(1-x)}}{1-x}dx + \mathcal O\Big(
  \frac{1}{\alpha^2} \int_0^{2\alpha} x \log\Big( \frac {2\alpha}
  x\Big) e^{-x} dx\Big) + \mathcal O\Big( \frac{1}{\alpha^2}\Big) \\ &
  = \frac 1{\alpha} \big( \log 2 \alpha + \gamma_e\big) + \mathcal O
  \Big(\frac {\log\alpha}{\alpha^2}\Big)
\end{aligned}
\end{equation}
and \eqref{eq:P1a} follows. For the proof of \eqref{eq:P2a} we have
\begin{equation}\label{eq:pp412a}
  \begin{aligned}
    \mathbb E_{\alpha,\theta}[T^\ast] = \frac{2 \int_0^1 \int_x^1
      \int_0^x
      \frac{e^{-2\alpha(y-x)}}{x(1-x)}\Big(\frac{x}{yz}\Big)^\theta
      e^{-2\alpha z} dz dy dx}{\int_0^1 z^{-\theta} e^{-2\alpha z}dz}
  \end{aligned}
\end{equation}
By using $\tfrac 1{x(1-x)} = \tfrac 1x + \tfrac 1{1-x}$ we again split
the integral in the numerator. For the $\tfrac 1x$-part we find
\begin{equation*}
  \begin{aligned}
    2 \int_0^1 \int_x^1 \int_0^x & \frac{e^{-2\alpha(y-x)}}{x} \Big(
    \frac{x}{yz}\Big)^\theta e^{-2\alpha z} dz dy dx \stackrel{x\to
      x/y}{=} 2\int_0^1\int_0^1 \int_0^{xy}
    \frac{e^{-2\alpha(y(1-x))}}{x} \Big( \frac xz\Big)^\theta
    e^{-2\alpha z} dz dy dx \\ & \stackrel{z\to
      z/x}{=}2\int_0^1\int_0^1
    \int_0^y e^{-2\alpha y(1-x)} z^{-\theta} e^{-2\alpha zx} dz dy dx \\
    & = \frac 1\alpha \int_0^1\int_0^1 \frac{1}{1-x} \big( e^{-2\alpha
      z(1-x)} - e^{-2\alpha(1-x)}\big) z^{-\theta} e^{-2\alpha zx} dz
    dx \\ & \stackrel{1\to 1-x}{=} \frac 1\alpha \int_0^1 \int_0^1
    \frac 1x \big( e^{-2\alpha z} - e^{-2\alpha (x+z-xz)}\big)
    z^{-\theta} dz dx \\ & = \frac 1\alpha \int_0^1
    \frac{1-e^{-2\alpha x}}{x} dx \int_0^1 e^{-2\alpha z} z^{-\theta}
    dz + \frac 1\alpha \int_0^1 \int_0^1 \frac{1}{x} e^{-2\alpha
      (x+z)}\big(1 - e^{2\alpha xz}\big) z^{-\theta} dx dz.
  \end{aligned}
\end{equation*}
Using \eqref{eq:Gamm1} we see that 
\begin{equation}\label{eq:pp413}
  \begin{aligned}
    \int_0^1 \int_0^1 & \frac 1x e^{-2\alpha(x+z)}\big( e^{2\alpha xz}
    -1\big) z^{-\theta} dx dz = \sum_{n=1}^\infty \int_0^1 e^{-2\alpha
      z} z^{n-\theta} dz \int_0^{2\alpha} e^{-x} \frac{x^{n-1}}{n!} dx \\
    & = \mathcal O\Big( \int_0^1 e^{-2\alpha z}
    z^{-\theta}\sum_{n=1}^\infty \frac{z^n}{n} dz\Big) \\ & = \mathcal
    O\Big( \int_0^1 e^{-2\alpha z} z^{-\theta}\log(1-z) dz\Big) \\ & =
    \mathcal O\Big( \int_0^{1} z^{1-\theta} e^{-2\alpha z} dz\Big) \\
    & = \Gamma(2-\theta) \mathcal O\Big(
    \frac{1}{\alpha^{2-\theta}}\Big)
  \end{aligned}
\end{equation}
such that, with \eqref{eq:Gamm1},
\begin{equation}\label{eq:pp415}
  \begin{aligned}
    \frac{2\int_0^1 \int_x^1 \int_0^x
      \frac{e^{-\alpha(y-x)}}{x}\Big(\frac{x}{yz}\Big)^\theta
      e^{-\alpha z} dz dy dx}{\int_0^1 z^{-\theta} e^{-\alpha z}dz} =
    \frac 1\alpha \big( \log2\alpha + \gamma_e\big) + \mathcal O\Big(
    \frac{1}{\alpha^2}\Big).
  \end{aligned}
\end{equation}
For the $\tfrac{1}{1-x}$-part, we write 
\begin{equation*}
  \begin{aligned}
    \Big| \int_0^1 \int_x^1 \int_0^x &
    \frac{e^{-2\alpha(y-x)}}{1-x}\Big(\frac{x}{yz}\Big)^\theta
    e^{-2\alpha z} dz dy dx - \Big( \int_0^1 z^{-\theta} e^{-2\alpha
      z}dz\Big) \Big( \int_0^1 \int_x^1 \frac {e^{-2\alpha(y-x)}}{1-x}
    \Big( \frac xy\Big)^\theta dy dx\Big)\Big| \\ & = \int_0^1
    \int_x^1 \int_x^1 \frac{e^{-2\alpha(y-x)}}{1-x} \Big(
    \frac{x}{yz}\Big)^\theta e^{-2\alpha z} dzdydx \\ & = \mathcal
    O\Big( \int_0^1 z^{-\theta} e^{-2\alpha z} \int_0^z \int_x^1
    \frac{e^{-2\alpha(y-x)}}{1-x} dy dx dz\Big) \\ & = \mathcal O\Big(
    \frac 1\alpha \int_0^1 z^{-\theta} e^{-2\alpha z} \int_{1-z}^1
    \frac{1-e^{-2\alpha x}}{x} dx dz \Big) \\ & = \mathcal O\Big(
    \frac 1\alpha\int_0^1 z^{-\theta} e^{-2\alpha z} \log(1-z)dz \Big)
    \\ & = \mathcal O \Big( \frac 1\alpha \int_0^1
    z^{1-\theta}e^{-2\alpha z} dz \Big) \\ & = \Gamma(2-\theta)
    \mathcal O\Big( \frac{1}{\alpha^{3-\theta}} \Big)
\end{aligned}
\end{equation*}
where we have used \eqref{eq:Gamm1} in the last step. Hence, by
\eqref{eq:pp411},
\begin{equation}\label{eq:pp417}
  \begin{aligned}
    \frac{2\int_0^1 \int_x^1 \int_0^x
      \frac{e^{-2\alpha(y-x)}}{1-x}\Big(\frac{x}{yz}\Big)^\theta
      e^{-2\alpha z} dz dy dx}{\int_0^1 z^{-\theta} e^{-2\alpha z}dz}
    = \frac 1\alpha \big( \log 2\alpha + \gamma_e\big) + \theta
    \mathcal O\Big( \frac{\log\alpha}{\alpha^2}\Big) + \mathcal
    O\Big(\frac 1{\alpha^2}\Big).
  \end{aligned}
\end{equation}
Plugging \eqref{eq:pp415} and \eqref{eq:pp417} into \eqref{eq:pp412a}
gives \eqref{eq:P2a}.

For the variance we start with $\theta<1$. By \eqref{eq:green2} and a
similar calculation as in the proof of Lemma \ref{l:fixP}, for some
finite $C$ (which is independent of $\theta$ and $\alpha$), using
\eqref{eq:green2} 
\begin{equation}
  \label{eq:42a0}
  \begin{aligned}
    \mathbb V^0& [T^\ast] = 2\int_0^1 \int_0^w t^\ast_\theta (w;0)
    t^\ast_\theta (x;w) dx dw \\ & = 4 \int_0^1 \int_0^w
    \frac{e^{2\alpha(w+x)}}{w^{1-\theta}(1-w)x^{1-\theta}(1-x)}
    \Big(\int_w^1 \frac{e^{-2\alpha y}}{y^\theta} dy\Big)
    \Big(\int_w^1 \frac{e^{-2\alpha z}}{z^\theta} dz\Big) \Big(
    \frac{\int_0^x \frac{e^{-2\alpha \hat z}}{\hat z^\theta}d\hat
      z}{\int_0^1 \frac{e^{-2\alpha \tilde z}}{\tilde z^\theta}d\tilde
      z}\Big)^2 dx dw \\ &
    \stackrell{w,x,y,z\to}{\leq}{2\alpha(w,x,y,z)} \frac{C}{\alpha^2}
    \int_0^{2\alpha} \int_0^w \int_w^1 \int_w^1
    \frac{e^{w+x-y-z}}{w(1-\frac w{2\alpha})x(1-\frac x{2\alpha})}
    \Big( \frac{wx}{yz}\Big)^\theta (x^{2-2\theta}\wedge 1) dz dy dx
    dw \\ & = \frac{2C}{\alpha^2} \int_0^{2\alpha} \int_0^z \int_0^y
    \int_0^w \Big( \frac 1w + \frac{1}{2\alpha-w}\Big) \Big( \frac 1x +
    \frac{1}{2\alpha-x}\Big)e^{w+x-y-z} \Big( \frac{wx}{yz}\Big)^\theta
    (x^{2-2\theta}\wedge 1) dx dw dy dz
  \end{aligned}
\end{equation}
where the last equality follows by the symmetry of the integrand with
respect to $y$ and $z$. We divide the last integral into several
parts. Moreover, we use that 
\begin{align}\label{eq:trick}
  \int_0^{2\alpha} \int_0^z \int_0^y \int_0^w ... dx dw dy dz =
  \int_0^{2\alpha} \int_0^z \int_0^y \int_0^{w\wedge 1} ... dx dw dy
  dz + \int_1^{2\alpha} \int_1^z \int_1^y \int_1^w ... dx dw dy dz.
\end{align}
First,
\begin{equation}
  \label{eq:42aa}
  \begin{aligned}
    \int_0^{2\alpha} & \int_0^z \int_0^y \int_0^w \frac 1{wx}
    e^{w+x-y-z} \Big( \frac{wx}{yz}\Big)^\theta (x^{2-2\theta}\wedge
    1) dx dw dy dz \\ & = \mathcal O\Big( \int_0^\infty \int_0^z
    \int_0^y \int_0^{w\wedge 1} \frac{e^{-z}}{wx}\Big(
    \frac{wx}{yz}\Big)^\theta x^{2-2\theta} dx dw dy dz +
    \int_1^\infty \int_1^z \int_1^y \int_1^w \frac{e^{w+x-y-z}}{wx} dx
    dw dy dz\Big) \\ & = \mathcal O\Big(
    \frac{1}{2-\theta}\int_0^\infty \int_0^z \int_0^y e^{-z} w \Big(
    \frac{1}{yz}\Big)^\theta dw dy dz + \int_1^\infty \int_x^\infty
    \int_w^\infty \int_y^\infty \frac{e^{w+x-y-z}}{wx} dz dy dw
    dx\Big) \\ & = \mathcal O\Big( \int_0^\infty \int_0^z
    \frac{e^{-z}}{z^\theta} y^{2-\theta} dy dz + \int_1^\infty
    \int_x^\infty \frac{e^{x-w}}{wx} dw dx\Big) \\ & = \mathcal O\Big(
    \int_0^\infty z^{3-2\theta} e^{-z}dz + \int_1^\infty \frac{1}{x^2}
    dx\Big) = \mathcal O(1)
  \end{aligned}
\end{equation}
Second, since $\frac{1}{w(2\alpha-x)}\leq \frac{1}{x(2\alpha-w)}$ for
$x\leq w$,
\begin{equation}
  \label{eq:42ab}
  \begin{aligned}
    \int_0^{2\alpha} & \int_0^z \int_0^y \int_0^w \Big( \frac
    1{w(2\alpha-x)} + \frac{1}{x(2\alpha-w)}\Big) e^{w+x-y-z} \Big(
    \frac{wx}{yz}\Big)^\theta (x^{2-2\theta}\wedge 1) dx dw dy dz \\ &
    = \mathcal O\Big( \int_0^\infty \int_0^z \int_0^y \int_0^{w\wedge
      1} \frac{e^{-z}}{x(2\alpha-w)} \Big(\frac{wx}{yz}\Big)^\theta
    x^{2-2\theta} dx dw dy dz \\ & \qquad \qquad \qquad \qquad \qquad
    \qquad \qquad \qquad + \int_1^{2\alpha} \int_1^z \int_1^y \int_1^w
    \frac{e^{w+x-y-z}}{x({2\alpha}-w)} dx dw dy dz\Big) \\ & =
    \mathcal O\Big( \int_0^{2\alpha} \int_0^z \int_0^y
    \underbrace{\frac{w^2}{{2\alpha}-w}}_{\leq
      \frac{({2\alpha})^2}{{2\alpha}-w}-{2\alpha}} e^{-z} \Big(
    \frac{1}{yz}\Big)^\theta dw dy dz + \int_1^{2\alpha}
    \int_x^{2\alpha} \int_w^{2\alpha} \int_y^\infty
    \frac{e^{w+x-y-z}}{x({2\alpha}-w)} dz dy dw dx\Big) \\ & =
    \mathcal O\Big( \alpha^2 \int_0^\infty \int_0^z \big(
    -\log(1-\tfrac y{2\alpha}) - \tfrac y {2\alpha}\big) e^{-z} \Big(
    \frac{1}{yz}\Big)^\theta dy dz + \int_1^{2\alpha} \int_x^{2\alpha}
    \frac{e^{w+x}(e^{-2w} - e^{-4\alpha})}{x({2\alpha}-w)} dw dx\Big) \\
    & = \mathcal O \Big( \int_0^\infty \int_0^z
    y^{2-\theta}\frac{e^{-z}}{z^\theta} dy dz + \int_1^{2\alpha}
    \int_0^{{2\alpha}-x} \frac{e^{-{2\alpha}+x}(e^w-e^{-w})}{wx} dw dx\Big) \\
    & = \mathcal O\Big( \int_0^\infty z^{3-2\theta}e^{-z}dz +
    \int_1^{{2\alpha}-1} \underbrace{\frac{1}{x({2\alpha}-x)}}_{=
      \frac 1{2\alpha}\big( \frac 1x + \frac{1}{{2\alpha}-x}\big)} dx
    \Big) + \mathcal O(1) \\ & = \mathcal O(1).
  \end{aligned}
\end{equation}
Third, 
\begin{equation}
  \label{eq:42ac}
  \begin{aligned}
    \int_0^{2\alpha} & \int_0^z \int_0^y \int_0^w \frac
    1{({2\alpha}-w)({2\alpha}-x)} e^{w+x-y-z} \Big(
    \frac{wx}{yz}\Big)^\theta (x^{2-2\theta}\wedge 1) dx dw dy dz \\ &
    \stackrell{(w,x,y,z)\to}{=}{{2\alpha}-(w,x,y,z)}\mathcal O\Big(
    \int_0^{2\alpha} \int_z^{2\alpha}\int_y^{2\alpha}\int_w^\infty
    \frac{e^{y+z-x-w}}{wx}dx dwdydz\Big) \\ & = \mathcal O\Big(
    \int_0^{2\alpha} \int_0^x\int_0^w\int_0^y \frac{e^{y+z-x-w}}{wx}
    dz dy dw dx\Big) \\ & = \mathcal O\Big( \int_0^{2\alpha} \int_0^x
    \frac{e^{-x}(e^{w}-e^{-w})}{xw}dw dx\Big) \\ & = \mathcal O\Big(
    \int_0^{2\alpha} \int_0^{x\wedge 1} \frac{e^{-x}}{x} dw dx +
    \int_1^{2\alpha} \int_1^x \frac{e^{w-x}}{xw} dwdx \Big) = \mathcal
    O(1).
  \end{aligned}
\end{equation}
Plugging \eqref{eq:42aa}, \eqref{eq:42ab}, \eqref{eq:42ac} into
\eqref{eq:42a0} gives \eqref{eq:P2b} for $\theta<1$.

For $\theta \geq 1$, we compute
\begin{equation}\label{eq:Var1}
\begin{aligned}
  \mathbb V_{\alpha,\theta}[T^\ast] & = \mathbb V_{\alpha,\theta}[T] =
  2\int_0^1 \int_0^w \int_w^1 \int_w^1
  \frac{e^{-{2\alpha}(w+x-y-z)}}{w(1-w)x(1-x)} \Big(
  \frac{wx}{yz}\Big)^\theta dz dy dx dw \\ & \leq 4 \int_0^1 \int_0^z
  \int_0^y \int_0^w \frac{e^{-{2\alpha}(w+x-y-z)}}{y(1-w)z(1-x)} dx dw
  dy dz \\ & \stackrell{(w,x,y,z)\to}{=}{{2\alpha}(w,x,y,z)} \mathcal
  O\Big( \int_0^{2\alpha} \int_0^z \int_0^y \int_0^{w\wedge 1}
  \frac{e^{-z}}{y({2\alpha}-w)z({2\alpha}-x)} dx dw dy dz \\ & \qquad
  \qquad \qquad \qquad \qquad \qquad + \int_1^{2\alpha}
  \int_x^{2\alpha} \int_w^{2\alpha} \int_y^\infty
  \frac{e^{w+x-y-z}}{({2\alpha}-w)({2\alpha}-x)yz} dz dy dw dx\Big) \\
  & = \mathcal O\Big( \frac 1\alpha \int_0^{2\alpha} \int_0^z \int_0^y
  \frac{e^{-z}}{yz} \frac{w\wedge 1}{{2\alpha}-w} dw dy dz +
  \int_1^{2\alpha} \int_x^{2\alpha} \int_w^{2\alpha}
  \frac{e^{w+x-2y}}{({2\alpha}-w)({2\alpha}-x)y^2} dy dw dx \Big) \\ &
  = \mathcal O \Big( \frac{1}{\alpha^2} \int_0^{2\alpha} \int_0^z
  \frac{e^{-z}}{z} \log\big(1-\tfrac y{2\alpha}\big) dy dz +
  \int_1^{{2\alpha}-1}
  \int_x^{{2\alpha}-1}\frac{e^{x-w}}{({2\alpha}-w)({2\alpha}-x)w^2} dw
  dx + \frac 1{\alpha^2} \Big) \\ & = \mathcal O\Big(
  \frac{1}{\alpha^2} \int_0^{2\alpha} e^{-z} \log\big( 1 - \tfrac
  z{2\alpha}\big) dz + \int_1^{{2\alpha}-1}
  \Big(\frac{1}{({2\alpha}-x)x}\Big)^2 dx + \frac{1}{\alpha^2} \Big)
  \\ & = \mathcal O\Big( \frac{1}{\alpha^2} +\frac{1}{\alpha^2}
  \int_1^{{2\alpha}-1}\Big( \frac 1x + \frac{1}{{2\alpha}-x}\Big)^2 dx
  \Big) = \mathcal O \Big( \frac{1}{\alpha^2}\Big).
\end{aligned}
\end{equation}
\end{proof}

\section{Key Lemmata}
\label{sub:key}
In this section we prove some key facts for the proof of Theorem
\ref{T1}. Recall $\rho = \gamma \frac{\log\alpha}{\alpha}$ from
\eqref{eq:defGamma} and let $\xi_1^{\mathcal X}, \xi_2^{\mathcal X},
\xi_3^{\mathcal X}, \xi_4^{\mathcal X}, \xi_5^{\mathcal X}$ and
$\xi_6^{\mathcal X}$ be Poisson-processes conditioned on $\mathcal X$
with rates $\tfrac 1{X_t}, \frac\theta 2\frac{1-X_t}{X_t}, \rho
(1-X_t), \rho X_t, 1$ and $\frac{X_t}{1-X_t}$, at time $t$, as given
in Table \ref{tab:1}. Moreover, let $T_i^{\mathcal X} := \sup \xi_i^{\mathcal
  X}$ be the last event of $\xi_i^{\mathcal X}$, $i=1,...,4$ in
$[0;T]$.

{\color{black} Note that $\xi_1^{\mathcal X}$ give the pair coalescence
  rates in $B$. In addition, coalescences in the wild-type background
  might happen due to events in $\xi_5^{\mathcal X} \cup
  \xi_6^{\mathcal X}$ since $1 + \frac{X_t}{1-X_t} =
  \frac{1}{1-X_t}$. The other processes determine changes in the
  genetic background due to mutation ($\xi_2^{\mathcal X}$) and
  recombination ($\xi_3^{\mathcal X}, \xi_4^{\mathcal X}$).}

\begin{table}
  \begin{center}
\begin{tabular}{|c|cccccc|}\hline
  \rule[-3mm]{0cm}{1cm}process & \hspace{2ex}$\xi_1^{\mathcal X}$\hspace{2ex} & \hspace{2ex}$\xi_2^{\mathcal X}$\hspace{2ex} & \hspace{2ex}$\xi_3^{\mathcal X}$\hspace{2ex} & \hspace{2ex}$\xi_4^{\mathcal X}$\hspace{2ex} & \hspace{2ex}$\xi_5^{\mathcal X}$\hspace{2ex} & \hspace{2ex}$\xi_6^{\mathcal X}$\hspace{2ex} \\[2ex]\hline
  \rule[-3mm]{0cm}{1cm}rate & $\tfrac{1}{X_t}$ & $\frac \theta 2\frac{1-X_t}{X_t}$ & $\rho(1-X_t)$ &  $\rho X_t$ & $1$ & $\frac{X_t}{1-X_t}$\\\hline
  \rule[-5mm]{0cm}{1.2cm} interpretation & \parbox{1.9cm}{coalescence in $B$} & \parbox{1.9cm}{mutation from $B$ to $b$} & \parbox{1.9cm}{recombination from $B$ to $b$} & \parbox{1.9cm}{recombination from $b$ to $B$} & \multicolumn{2}{c|}{\parbox{1.9cm}{coalescence in $b$}} \\\hline
\end{tabular}
\end{center}
\caption{\label{tab:1}Rates of Poisson processes}
\end{table}

We will prove three Lemmata. The first deals with events of the Poisson
processes during $[0;T_0]$. Recall that $T_0>0$ iff $\theta < 1$. The
second lemma is central for \eqref{eq:alsu}, i.e., to prove that no
lines are in the beneficial background at time $T_0$. The third Lemma
helps to order events during $[T_0;T]$. We use the convention that
$[s;t]=\emptyset$ for $s>t$.

\begin{lemma}
  \label{l:est1}
  Let $\theta<1$. Then,
  \begin{align}
    \label{eq:l:est1:1}
    \mathbb P^0_{\alpha,\theta}[\xi_4^{\mathcal X} \cap[0;T_0]\neq
    \emptyset] & = \mathcal O\Big(
    \frac{1}{\alpha\log\alpha}\Big),\\
    \label{eq:l:est1:2}
    \mathbb P^0_{\alpha,\theta}[\xi_6^{\mathcal X} \cap[0;T_0]\neq
    \emptyset] & = \mathcal O\Big( \frac{1}{\alpha^2}\Big).
  \end{align}
  All error terms are in the limit for large $\alpha$, are uniform in
  $\theta$ and uniform on compacta in $\gamma$.
\end{lemma}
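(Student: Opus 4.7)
The plan is as follows. Conditional on the path $\mathcal X$, each $\xi_i^{\mathcal X}$ is a Poisson point process with rate $\lambda_i(X_t)$ as listed in Table \ref{tab:1}, so
$$\mathbb P^0_{\alpha,\theta}\bigl[\xi_i^{\mathcal X}\cap[0;T_0]\neq\emptyset\bigr]\le \mathbb E^0_{\alpha,\theta}\Big[\int_0^{T_0}\lambda_i(X_t)\,dt\Big].$$
For $\theta\ge 1$ the boundary $0$ is inaccessible and $T_0=0$ a.s., so there is nothing to prove; I therefore assume $\theta<1$. The key tool is a Green-function identity: by excursion theory from $0$ (in the spirit of \cite{BartonEtheridgeSturm2004,Taylor2007}), the occupation density of the final successful excursion at level $x$ equals $P^1_{\alpha,\theta}(x)\,t_{\alpha,\theta}(x;0)$. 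Subtracting this from the full Green function yields, for any non-negative $f$,
\begin{equation*}
\mathbb E^0_{\alpha,\theta}\Big[\int_0^{T_0}f(X_t)\,dt\Big] \;=\; \int_0^1 \bigl(1-P^1_{\alpha,\theta}(x)\bigr)\,t_{\alpha,\theta}(x;0)\,f(x)\,dx.
\end{equation*}

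To prove \eqref{eq:l:est1:1} I would specialise to $f(x)=\rho x$ and use $t_{\alpha,\theta}(x;0)=\tfrac{2}{x(1-x)}\int_x^1 e^{-2\alpha(y-x)}(x/y)^\theta dy$ to rewrite the integrand as $\tfrac{2\rho(1-P^1_{\alpha,\theta}(x))}{1-x}\int_x^1 e^{-2\alpha(y-x)}(x/y)^\theta dy$. The inner integral is always bounded by $1/(2\alpha)$. I would then split the outer integration at $x_0:=1/(2\alpha)$: on $[0,x_0]$ the bounds $(1-P^1_{\alpha,\theta})\le 1$ and $(1-x)^{-1}\le 2$ give $\mathcal O(1/\alpha^2)$, while on $(x_0,1]$ the representation
$$1-P^1_{\alpha,\theta}(x) \;=\; \frac{\int_{2\alpha x}^{2\alpha} e^{-y}y^{-\theta}\,dy}{\int_0^{2\alpha} e^{-y}y^{-\theta}\,dy}\;\le\; \frac{(2\alpha x)^{-\theta}e^{-2\alpha x}}{\Gamma(1-\theta)+\mathcal O(e^{-2\alpha})}$$
(uniform on $\theta$-compacta thanks to \eqref{eq:Gamm1}) together with the substitution $u=2\alpha x$ gives another $\mathcal O(1/\alpha^2)$. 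Multiplying by $\rho=\gamma\alpha/\log\alpha$ yields $\mathcal O(1/(\alpha\log\alpha))$.

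The argument for \eqref{eq:l:est1:2} is analogous with $f(x)=x/(1-x)$, producing the factor $(1-x)^{-2}$; the new singularity at $x=1$ is controlled by further splitting $(x_0,1]$ into $(x_0,1/2]$ and $(1/2,1]$. On the last piece one uses the sharper bound $1-P^1_{\alpha,\theta}(x)\le C\alpha^{1-\theta}(1-x)e^{-2\alpha}$ (from $y^{-\theta}\le(2\alpha x)^{-\theta}$ on $[2\alpha x,2\alpha]$) together with $\int_x^1 e^{-2\alpha(y-x)}(x/y)^\theta dy\le 1-x$, which jointly tame both the $(1-x)^{-2}$ factor and the shrinking inner integral; all pieces then combine to $\mathcal O(1/\alpha^2)$. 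The hard part is the justification of the Green-function identity at the boundary starting point $p=0$, i.e.\ the identification of the post-$T_0$ segment with an excursion of $\mathcal X$ conditioned to reach $1$ before returning to $0$; this should be argued via Itô excursion theory following \cite{BartonEtheridgeSturm2004,Taylor2007}. Once the identity is in hand, the remainder is routine bookkeeping that uses only Lemma \ref{l:fixP} and \eqref{eq:Gamm1}.
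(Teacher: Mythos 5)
Your route is the paper's route: bound the non-emptiness probability by the first moment $\mathbb E^0_{\alpha,\theta}\bigl[\int_0^{T_0}\lambda_i(X_t)\,dt\bigr]$, express this via the last-exit (excursion) decomposition as $\int_0^1\bigl(t_{\alpha,\theta}(x;0)-t^\ast_{\alpha,\theta}(x;0)\bigr)\lambda_i(x)\,dx=\int_0^1\bigl(1-P^1(x)\bigr)t_{\alpha,\theta}(x;0)\lambda_i(x)\,dx$, and then estimate explicitly using the exponential decay of $1-P^1(x)$. So the strategy is sound and matches Section \ref{sub:P1}'s machinery exactly. However, two of your concrete estimates need repair. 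First, in the proof of \eqref{eq:l:est1:1} on $(x_0,1]$ you cannot combine ``inner integral $\le 1/(2\alpha)$'' with $1-P^1_{\alpha,\theta}(x)\le Ce^{-2\alpha x}$: the resulting bound is $\tfrac{C}{\alpha}\int_{x_0}^1\tfrac{e^{-2\alpha x}}{1-x}\,dx=+\infty$, since $e^{-2\alpha x}$ does not vanish at $x=1$. You must keep the inner integral as $\int_x^1e^{-2\alpha(y-x)}\,dy=\tfrac{1}{2\alpha}\bigl(1-e^{-2\alpha(1-x)}\bigr)$, whose vanishing at $x=1$ cancels the $(1-x)^{-1}$ (equivalently, use $I(x)\le 1-x$ near $x=1$, the very device you invoke for the second bound). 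Second, your ``sharper bound'' $1-P^1_{\alpha,\theta}(x)\le C\alpha^{1-\theta}(1-x)e^{-2\alpha}$ on $(1/2,1]$ is false as stated: at $x=\tfrac12$ the left-hand side is of order $\alpha^{-\theta}e^{-\alpha}$, which is much larger than $\alpha^{1-\theta}e^{-2\alpha}$. The correct inequality, obtained from $\int_{2\alpha x}^{2\alpha}e^{-y}y^{-\theta}\,dy\le(2\alpha x)^{-\theta}e^{-2\alpha x}\cdot 2\alpha(1-x)$, has $e^{-2\alpha x}$ in place of $e^{-2\alpha}$, and that version does suffice for the $\mathcal O(1/\alpha^2)$ conclusion.

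Both issues are avoided at a stroke in the paper by using monotonicity of $P^1_{\alpha,\theta}$ in $\theta$ to reduce to $\theta=0$, where $1-P^1_{\alpha,0}(x)=\tfrac{e^{-2\alpha x}-e^{-2\alpha}}{1-e^{-2\alpha}}=\mathcal O\bigl(e^{-2\alpha x}(1-e^{-2\alpha(1-x)})\bigr)$ carries the factor vanishing at $x=1$ automatically; this also removes any need to track $\Gamma(1-\theta)$ for uniformity in $\theta$. I recommend adopting that reduction (or at least the corrected bounds above); with it, your argument closes and is essentially the published one.
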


\begin{lemma}
  \label{l:est1a}
  For all values of $\theta$ and $\alpha$, 
  \begin{align*}
    \mathbb P^0_{\alpha,\beta}[ \xi_2^{\mathcal X}\cap [T_0; T_0+) =\emptyset] = 0.
  \end{align*}
\end{lemma}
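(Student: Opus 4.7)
The plan is to reduce the statement to a divergence result for a path-functional of $\mathcal X$, and then to derive that divergence via Itô's formula applied to $\log X_s$. Begin by writing
$$\{\xi_2^{\mathcal X}\cap [T_0;T_0+)=\emptyset\}=\bigcup_{n\ge 1}\{\xi_2^{\mathcal X}\cap [T_0;T_0+\tfrac{1}{n})=\emptyset\}.$$
By countable subadditivity it suffices, for each fixed $\epsilon>0$, to show that $\mathbb P^0_{\alpha,\theta}[\xi_2^{\mathcal X}\cap [T_0;T_0+\epsilon)=\emptyset]=0$. Conditionally on $\mathcal X$, the point process $\xi_2^{\mathcal X}$ is Poisson with intensity $\tfrac{\theta}{2}\tfrac{1-X_s}{X_s}$, so
$$\mathbb P\big[\xi_2^{\mathcal X}\cap [T_0;T_0+\epsilon)=\emptyset\,\big|\,\mathcal X\big]=\exp\Big(-\int_{T_0}^{T_0+\epsilon}\tfrac{\theta}{2}\tfrac{1-X_s}{X_s}\,ds\Big).$$
Continuity of $X$ together with $X_{T_0}=0$ forces $1-X_s\ge\tfrac{1}{2}$ on a random right-neighborhood of $T_0$, and the task reduces to showing that $\int_{T_0}^{T_0+\epsilon}\tfrac{ds}{X_s}=+\infty$ almost surely.

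For that I would apply Itô's formula to $\log X_s$ on intervals $[T_0+r,T_0+\epsilon]$ with $0<r<\epsilon$, where $X_s>0$ by definition of $T_0$ as the last zero of $X$. A direct computation yields
$$\log X_{T_0+\epsilon}-\log X_{T_0+r}=\int_{T_0+r}^{T_0+\epsilon}\Big[\tfrac{\theta-1}{2}\tfrac{1-X_u}{X_u}+\alpha(1-X_u)\Big]\,du+\int_{T_0+r}^{T_0+\epsilon}\sqrt{\tfrac{1-X_u}{X_u}}\,dW_u.$$
As $r\downarrow 0$ the left-hand side tends to $+\infty$ almost surely, because $X_{T_0+r}\to 0$. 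Suppose, for contradiction, that the event $A:=\{\int_{T_0}^{T_0+\epsilon}\tfrac{1-X_u}{X_u}\,du<\infty\}$ has positive probability. On $A$ the drift on the right stays bounded as $r\downarrow 0$: its first summand is at most $\tfrac{|\theta-1|}{2}\int_{T_0}^{T_0+\epsilon}\tfrac{1-X_u}{X_u}\,du$, which is finite by the defining hypothesis of $A$, while the second is bounded by $\alpha\epsilon$. Moreover the Itô integrand lies in $L^2(du)$ over $[T_0,T_0+\epsilon]$ on $A$, so by Dambis--Dubins--Schwarz the stochastic integral extends continuously to $r=0$ as an almost surely finite random variable. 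Hence the right-hand side stays bounded on $A$, contradicting the divergence of the left-hand side, and $\mathbb P(A)=0$ is forced.

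The main obstacle is the rigorous treatment of the Itô integral in the limit $r\downarrow 0$: the crux is that $L^2(du)$-finiteness of the integrand on $A$ is exactly the condition needed (via a Brownian time-change) for the integral to admit a finite limit, and this finiteness is what powers the contradiction with $\log X_{T_0+r}\to-\infty$. A minor point is the case $\theta=0$, for which $T_0=+\infty$ under $\mathbb P^0$; there the statement is either vacuous or must be read as tacitly assuming $\theta>0$, the regime of interest throughout the paper.
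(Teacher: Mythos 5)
Your reduction of the lemma to the almost sure divergence of $\int_{T_0}^{T_0+\epsilon}\frac{1-X_s}{X_s}\,ds$ is exactly the reduction the paper makes; the difference is that the paper then simply cites Lemma 2.1 of Taylor (2007) (extending Lemma 4.4 of Barton, Etheridge and Sturm (2004)) for this divergence, applied to the time-reversed path, whereas you attempt to prove it from scratch by a McKean-type argument on $\log X$. The idea is right, but as written there is a genuine gap at the one step that carries all the difficulty: $T_0$ is a \emph{last-exit} time, not a stopping time. Consequently $\int_{T_0+r}^{T_0+\epsilon}\sqrt{(1-X_u)/X_u}\,dW_u$ is not an It\^o integral in the usual sense. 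You cannot realize it as $M_{T_0+\epsilon}-M_{T_0+r}$ for a local martingale $M_t=\int_0^t\sqrt{(1-X_u)/X_u}\,dW_u$, because for $\theta<1$ the bracket $\int_0^t\frac{1-X_u}{X_u}\,du$ is a.s.\ infinite once $[0,t]$ contains a zero of $X$ --- which is essentially the fact being proved, so assuming its finiteness to define $M$ would be circular. For the same reason Dambis--Dubins--Schwarz does not apply to $r\mapsto\int_{T_0+r}^{T_0+\epsilon}\cdots\,dW_u$: its endpoints are anticipating, so it is not a (local) martingale for any of the natural filtrations. The $L^2(du)$ condition you single out as the crux is necessary, but the real obstacle is adaptedness; note also that the divergence cannot be captured by deterministic approximations of the lower limit, since $\int_q^{q+\epsilon}\frac{1-X_u}{X_u}\,du<\infty$ pathwise whenever $X>0$ on $[q,q+\epsilon]$.

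The standard repair is precisely the device used in the references the paper cites: time-reverse. The process $Z_t=X_{T-t}$ is again a one-dimensional diffusion, $T^\ast=T-T_0$ is its \emph{first} hitting time of $0$ --- a genuine stopping time --- and $\int_s^{T^\ast}\frac{1-Z_t}{Z_t}\,dt=\int_{T_0}^{T-s}\frac{1-X_u}{X_u}\,du$, after which your It\^o-plus-time-change argument goes through verbatim near $T^\ast$. (Alternatively one may use the Williams-type decomposition and work with the $h$-transformed diffusion describing $(X_{T_0+t})_{t\geq 0}$ with $h=P^1_{\alpha,\theta}$, but then the drift acquires the extra term $\sigma^2 h'/h$, which must be carried through your estimate.) For $\theta\geq 1$ one has $T_0=0$ almost surely, the lower endpoint is deterministic, and your argument is already rigorous via localization at the stopping times $\inf\{t:\int_0^t\frac{1-X_u}{X_u}\,du\geq n\}$. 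Your closing remark on $\theta=0$ is correct: the rate of $\xi_2^{\mathcal X}$ then vanishes identically and the assertion fails as literally stated, so $\theta>0$ is tacitly assumed.
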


\begin{lemma}
  \label{l:est2}
  The bounds
  \begin{align}
    \label{eq:l:est2:1}
    \mathbb P^0_{\alpha,\theta}[\xi_4^{\mathcal X}
    \cap[T_0;T_2^{\mathcal X}]\neq \emptyset] & = \mathcal O\Big(
    \frac{1}{\sqrt\alpha}\Big),\\
    \label{eq:l:est2:1b}
    \mathbb P^0_{\alpha,\theta}[\xi_4^{\mathcal X} \cap[T_0;T_3^{\mathcal X}]\neq
    \emptyset] & = \mathcal O\Big(
    \frac{1}{(\log\alpha)^2}\Big),\\
    \label{eq:l:est2:2}
    \mathbb P^0_{\alpha,\theta}[\xi_5^{\mathcal X} \cap[T_0;T]\neq
    \emptyset] & = \mathcal O\Big( \frac{\log\alpha}{\alpha}\Big),\\
    \label{eq:l:est2:3}
    \mathbb P^0_{\alpha,\theta}[\xi_6^{\mathcal X}
    \cap[T_0;T_2^{\mathcal X}]\neq \emptyset] & =
    \mathcal O\Big( \frac{1}{\sqrt\alpha}\Big),\\
    \label{eq:l:est2:3b}
    \mathbb P^0_{\alpha,\theta}[\xi_6^{\mathcal X}
    \cap[T_0;T_3^{\mathcal X}]\neq \emptyset] & = \mathcal O\Big(
    \frac{\log\alpha}{\alpha}\Big).
  \end{align}
  hold in the limit for large $\alpha$, are uniform on compacta in
  $\theta$ and $\gamma$.
\end{lemma}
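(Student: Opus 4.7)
The plan is to treat all five bounds via a common recipe: (i) bound $\mathbb P[\xi_i^{\mathcal X}\cap I\ne\emptyset]$ by its expected count $\mathbb E\int_I \lambda_i(X_t)\,dt$ with $\lambda_i$ the intensity listed in Table~\ref{tab:1}, and (ii) evaluate the resulting expectation against the Green function $t^\ast_{\alpha,\theta}(\cdot;0)$ of the diffusion $\mathcal X$ conditioned on not returning to $0$ during $[T_0;T]$ (for $\theta\ge 1$ no conditioning is needed and one uses $t_{\alpha,\theta}$). Bound \eqref{eq:l:est2:2} is immediate: since $\lambda_5\equiv 1$, we get $\mathbb E[T^\ast]=\mathcal O(\log\alpha/\alpha)$ from Proposition~\ref{P1}.

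For the four bounds involving an auxiliary ``last event'' time $T_j^{\mathcal X}$ with $j\in\{2,3\}$, I would apply Fubini together with the Poisson void estimate
\[
  \mathbb P[T_j^{\mathcal X}\ge t\mid\mathcal X]\;=\;1-\exp\!\Big(-\!\!\int_t^T \lambda_j(X_u)\,du\Big)\;\le\;\min\!\Big(1,\int_t^T \lambda_j(X_u)\,du\Big)
\]
to obtain
\[
  \mathbb P[\xi_i^{\mathcal X}\cap[T_0;T_j^{\mathcal X}]\ne\emptyset]\;\le\;\mathbb E\int_{T_0}^T\!\!\int_t^T\lambda_i(X_t)\,\lambda_j(X_u)\,du\,dt,
\]
and then use \eqref{eq:green2} to rewrite the right-hand side as the double integral $\int\!\int t^\ast(x;0)\,t^\ast(y;x)\,\lambda_i(x)\lambda_j(y)\,dy\,dx$. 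For bound \eqref{eq:l:est2:1b}, the product $\lambda_4(x)\lambda_3(y)=\rho^2\,x(1-y)$ kills both boundary singularities of the Green functions, so the substitution $x,y\mapsto 2\alpha x,\,2\alpha y$ together with the estimates of Lemma~\ref{L1a} produces $\mathcal O(1/\alpha^2)$ for the Green integral; combined with $\rho^2=\gamma^2\alpha^2/(\log\alpha)^2$ this yields the claimed $\mathcal O(1/(\log\alpha)^2)$. Bound \eqref{eq:l:est2:3b} is analogous: the $1/(1-x)$-singularity of $\lambda_6(x)=x/(1-x)$ survives, but is absorbed by the $(1-y)$-factor from $\lambda_3$ together with the fast decay of $1-X_u$ near $T$, giving $\mathcal O(\log\alpha/\alpha)$.

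The main obstacle lies in bounds \eqref{eq:l:est2:1} and \eqref{eq:l:est2:3}: here $\lambda_2(y)=\tfrac{\theta}{2}(1-y)/y$ contributes a $1/y$-singularity at the left boundary that is not cancelled by $\lambda_4(x)=\rho x$ or $\lambda_6(x)=x/(1-x)$, and the na\"ive $1-e^{-x}\le x$ bound indeed diverges---already $\mathbb E\int_{T_0}^T(1-X_u)/X_u\,du=\infty$, reflecting that infinitely many $\xi_2$-events accumulate at $T_0$ by Lemma~\ref{l:est1a}. To circumvent this, I would split the $\xi_i$-integration on a threshold $c$: on $\{X_t\le c\}$ the crude estimates $\rho X_t\le\rho c$ and $X_t/(1-X_t)\le c/(1-c)$ combined with $\mathbb E[T^\ast]=\mathcal O(\log\alpha/\alpha)$ give contributions of order $\gamma c$ and $c/(1-c)\cdot\log\alpha/\alpha$, respectively. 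On $\{X_t>c\}$ the strong Markov property at time $t$ reduces $\mathbb P[\xi_2^{\mathcal X}\cap[t;T]\ne\emptyset\mid X_t=x]$ to the corresponding probability $\mathbb P^x[\xi_2\ne\emptyset]$ for the diffusion started at $x>c$, whose Green integral $\int_0^1 t^\ast(y;x)\,(1-y)/y\,dy$ is finite since for $y\ll x$ the asymptotic $t^\ast(y;x)\asymp P^1(y)/P^1(x)$ together with Lemma~\ref{l:fixP} gives $P^1(y)\lesssim (2\alpha y)^{1-\theta}$, rendering the $1/y$-singularity integrable. Optimising $c\sim 1/\sqrt\alpha$ balances the two contributions to $\mathcal O(1/\sqrt\alpha)$, and the uniformity in $\theta$ and $\gamma$ on compacta is inherited from the uniformity already built into Lemma~\ref{L1a} and Lemma~\ref{l:fixP}.
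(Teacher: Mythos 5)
Your recipe coincides with the paper's proof for \eqref{eq:l:est2:2} (just $\mathbb E[T^\ast]$), for \eqref{eq:l:est2:1b} (the paper bounds $\mathbb P[\xi_4\cap[T_0;T_3^{\mathcal X}]\neq\emptyset]$ by exactly $\rho^2\int\!\!\int t^\ast(x;0)t^\ast(y;x)x(1-y)\,dy\,dx$, splitting into $y<x$, controlled by $\mathbb V^0[T^\ast]$, and $y>x$, computed directly), and for \eqref{eq:l:est2:3b} (same double Green integral with $\frac{w}{1-w}(1-x)$). Where you genuinely diverge is on the two $\mathcal O(1/\sqrt\alpha)$ bounds \eqref{eq:l:est2:1} and \eqref{eq:l:est2:3}. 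The paper does \emph{not} threshold on the value of $X_t$; it introduces the last hitting time $\widetilde T_\varepsilon^{\mathcal X}=\sup\{t\le T: X_t=\varepsilon\}$ with $\varepsilon=\log\alpha/\sqrt\alpha$, writes $\mathbb P[\xi_4\cap[T_0;T_2^{\mathcal X}]\neq\emptyset]\le\mathbb P[\xi_4\cap[T_0;\widetilde T_\varepsilon^{\mathcal X}]\neq\emptyset]+\mathbb P[\widetilde T_\varepsilon^{\mathcal X}\le T_2^{\mathcal X}]$, bounds the first term with the Green function $t^{\ast\ast}(\cdot;\varepsilon)$ of the \emph{time-reversed} diffusion, and the second by $\frac{\theta}{2\varepsilon}\mathbb E[T^\ast]$ since $X_t\ge\varepsilon$ after its last visit to $\varepsilon$. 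Your space-decomposition on $\{X_t\le c\}$ versus $\{X_t>c\}$ is a legitimate alternative and avoids the reversed Green function altogether.

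However, as written your argument for \eqref{eq:l:est2:1} and \eqref{eq:l:est2:3} has a real gap at the decisive step. On $\{X_t>c\}$ you only assert that $\int_0^1 t^\ast(y;x)\frac{1-y}{y}\,dy$ is \emph{finite}. Finiteness is not enough: if this quantity were merely $\mathcal O(1)$ uniformly in $x>c$, the $\{X_t>c\}$ contribution to \eqref{eq:l:est2:1} would be bounded only by $\rho\,\mathbb E[T^\ast]\cdot\mathcal O(1)=\mathcal O(\gamma)$, which proves nothing, and there would be no second quantity depending on $c$ to ``balance'' against $\gamma c$. What actually saves the argument is a quantitative decay, roughly $\int_0^1 t^\ast(y;x)\frac{1-y}{y}\,dy=\mathcal O\big(\frac{1}{\alpha x}\big)$ for $\alpha x$ large: the $y\ge x$ part follows from $t^\ast(y;0)\le t(y;0)\lesssim\frac{1}{\alpha y(1-y)}\wedge\frac{2}{y}$, and the $y<x$ part is controlled not by $P^1(y)\lesssim(2\alpha y)^{1-\theta}$ alone but by the factor $1-P^1(x)\approx e^{-2\alpha x}(2\alpha x)^{-\theta}$ in $t^\ast(y;x)$, i.e.\ by the exponentially small probability that the conditioned path returns below $x$. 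Neither estimate appears in your sketch, and the $1/y$-integrability you do invoke only shows the integral is not infinite. For \eqref{eq:l:est2:3} there is a second unaddressed point: on $\{X_t>c\}$ one has $\mathbb E\big[\int_{T_0}^T\frac{X_t}{1-X_t}\,dt\big]=\infty$ (the $\xi_6$-events accumulate at $T$, mirroring Lemma \ref{l:est1a} at $T_0$), so a bound on the $\xi_2$-tail probability that is uniform in $x$ cannot be integrated against $\lambda_6$; you must use its decay as $x\to1$ (the $(1-y)$ factor of $\lambda_2$ integrated over $[x,1]$ produces an extra $(1-x)$), exactly the cancellation you describe for \eqref{eq:l:est2:3b} but omit here. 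Both repairs are available, so the route is viable, but the proposal as stated does not yet contain the estimates that make it close.
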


\begin{remark}\label{rem:5}
  Lemmata \ref{l:est1} and \ref{l:est2} are crucial in ordering events
  in $\xi^{\mathcal X}$ (recall all rates from Table \ref{tab:2}). In
  particular, let us consider events in $[T_0;T]$, i.e., the bounds
  from Lemma \ref{l:est2}. The full argument for the application of
  Lemmata \ref{l:est1}-\ref{l:est2} is given in the proof of Theorem
  \ref{T1} in Section \ref{sub:T1}.  Consider a single line (i.e. a
  sample of size 1).  {\color{black}Recall from Table \ref{tab:1} that
    the} processes $\xi_i^{\mathcal X}, i=2,3,4$ determine
  changes in the genetic background due to mutation ($\xi_2^{\mathcal
    X}$) and recombination ($\xi_3^{\mathcal X}, \xi_4^{\mathcal
    X}$).

  As we see from \eqref{eq:l:est2:1b}, the event that the line
  (backwards in time) changes background by recombination to the
  wild-type and back to the beneficial background has a probability of
  order $\mathcal O \big( \frac{1}{(\log\alpha)^2}\big)$. The event
  that the line changes genetic background by mutation and recombines
  back to the beneficial background has a probability of order
  $\mathcal O \big( \frac{1}{\sqrt\alpha}\big)$ by
  \eqref{eq:l:est2:1}.  The event of a coalescence in the wild-type
  background requires that both lines change background to the
  wild-type and so, necessarily, one event from \eqref{eq:l:est2:2},
  \eqref{eq:l:est2:3} or \eqref{eq:l:est2:3b} must take place. Hence,
  the probability of a coalescence event in the wild-type background
  is of the order $\mathcal O \big( \frac{1}{\sqrt\alpha}\big)$.
\end{remark}

\begin{proof}[Proof of Lemma \ref{l:est1}]
  For \eqref{eq:l:est1:1}, since $P^1_\theta$ is monotone increasing
  in $\theta$ and $P_{\alpha,0}^1(p) = \frac{1-e^{-2\alpha
      p}}{1-e^{-2\alpha}}$, we compute, using Lemma \ref{l:fixP} and
  $\rho=\mathcal O\big( \frac{\alpha}{\log\alpha}\big)$,
  \begin{align*}
    \mathbb P^0_{\alpha,\theta}[\xi_4^{\mathcal X} \cap[0;T_0]\neq
    \emptyset] & = \mathbb E^0_{\alpha,\theta} \Big[ 1 - \exp\Big( -
    \int_0^{T_0} \rho X_t dt
    \Big)\Big] \leq \mathbb E^0_{\alpha,\theta} \Big[ \int_0^{T_0} \rho X_t dt\Big] \\
    & = \rho \int_0^1 \big( t_{\alpha,\theta}(x;0) -
    t^\ast_{\alpha,\theta}(x;0)\big) x dx \\ & \leq \rho \int_0^1
    (1-P^1_{\alpha,0}(x))x t_{\alpha,\theta}(x;0) dx \\ & = \mathcal
    O\Big( \rho \int_0^1 \int_x^1 \frac{e^{-2\alpha y}}{1-x}
    \underbrace{\Big( \frac{x}{y} \Big)^{\theta}}_{\leq 1} dy dx \Big)
    \\ & = \mathcal O\Big( \frac{\rho}{\alpha} e^{-2\alpha} \int_0^1
    \frac{e^{2\alpha(1-x)}-1}{1-x}dx\Big) \\ & = \mathcal
    O\Big(\frac{e^{-2\alpha} }{\log\alpha} \int_0^{2\alpha}
    \frac{e^x-1}{x} dx \Big) \\ & = \mathcal O\Big(\frac{1}{\alpha
      \log\alpha}\Big).
  \end{align*}
  For $\xi_6^{\mathcal X}$, by a similar calculation,
  \begin{align*}
    \mathbb P^0_{\alpha,\theta}[\xi_6 \cap[0;T_0]\neq \emptyset] &
    \leq \int_0^1 \big( t_{\alpha,\theta}(x;0) -
    t^\ast_{\alpha,\theta}(x;0)\big) \frac{x}{1-x} dx \\ & = \mathcal
    O\Big( \int_0^1 \int_x^1 \frac{e^{-2\alpha y}(1-e^{-2\alpha(1-x)})
    }{(1-x)^2} \Big( \frac{x}{y} \Big)^{\theta} dy dx\Big) \\ & \leq
    \mathcal O\Big( \frac 1\alpha e^{-2\alpha} \int_0^1
    \frac{(e^{2\alpha (1-x)}-1)(1-e^{-2\alpha(1-x)})}{(1-x)^2} dx
    \Big) \\
    & =
    \mathcal O\Big( e^{-2\alpha} \int_0^{2\alpha} \frac{(e^x-1)(1-e^{-x})}{x^2}dx \Big) \\
    & = \mathcal O\Big( \frac 1{\alpha^2}\Big).
  \end{align*}
\end{proof}

\begin{proof}[Proof of Lemma \ref{l:est1a}]
  Note that the process $\mathcal X$ as well as its time-reversion
  $\mathcal Z=(Z_t)_{t\geq 0}$ with $Z_t:=X_{T-t}$ are special cases
  of the diffusion studied in \cite{Taylor2007}.  We use Lemma 2.1 of
  that paper, which extends Lemma 4.4 of
  \cite{BartonEtheridgeSturm2004}. Their Lemma 2.1 shows that, for all
  $0\leq s \leq T^\ast$,
  $$ \mathbb P_{\alpha,\theta}\Big[ \int_s^{T^\ast} \frac {1-Z_t}{Z_t}dt = \infty \Big] = 1. $$
  In particular,
  $$ \mathbb P_{\alpha,\theta}[\xi_2^{\mathcal X} \cap [T_0, T_0+s) =\emptyset] = 
  \mathbb E_{\alpha,\theta}\Big[ \exp\Big( - \int_{T_0}^{T_0+s} \frac
  \theta 2\frac{1-X_t}{X_t} dt\Big)\Big] = 0.$$ Hence the result
  follows for $s\to 0$.
\end{proof}

\begin{proof}[Proof of Lemma \ref{l:est2}]
  \noindent\emph{Proof of \eqref{eq:l:est2:1}:} Set $\mathcal Y = (Y_t)_{0\leq
    t\leq T^\ast}$ with $Y_t = X_{T-t}$, i.e. $\mathcal Y$ is the
  time-reversion of $(X_{T_0+t})_{0\leq t\leq T^\ast}$. Recall that
  the Green function of the time-reversed diffusion $\mathcal Y$ is
  given for $x\leq p$ by (see \cite{Ewens2004}, (4.51))
  \begin{equation*}
    \label{eq:green3}
    \begin{aligned}
      t^{\ast\ast}(x;p) = 2 \frac{1}{\sigma^2(x) \psi(x)}
      \frac{\int_x^1 \psi(y)dy \int_0^x \psi(y) dy}{\int_0^1 \psi(y)
        dy}
    \end{aligned}
  \end{equation*}
  and for $p\leq x$ by (see \cite{Ewens2004}, (4.52))
  \begin{equation*}
    \label{eq:green3a}
    \begin{aligned}
      t^{\ast\ast}(x;p) = 2 \frac{1}{\sigma^2(x) \psi(x)}
      \frac{\int_x^1 \psi(y)dy \int_0^p \psi(y) dy \int_x^1 \psi(y)
        dy}{\int_p^1 \psi(y) dy \int_0^1 \psi(y) dy}
    \end{aligned}
  \end{equation*}
  with the convention that $\frac{\int_{0}^x\psi(y) dy}{\int_0^1
    \psi(y)dy}=1$ for $\theta\geq 1$. Denote by
  \begin{align*} \widetilde T_{\varepsilon}^{\mathcal X} :=
    \sup\{{t\leq T: X_t=\varepsilon}\} = T - \inf\{t\geq 0:
    Y_t=\varepsilon\}.
  \end{align*}
  We will use
  \begin{align}
    \label{eq:pr1g}
    \mathbb P_{\alpha,\theta}^0[\xi_4^{\mathcal X} \cap [T_0;
    T_2^{\mathcal X}] \neq \emptyset] \leq \mathbb
    P_{\alpha,\theta}^0[\xi_4^{\mathcal X} \cap [T_0; \widetilde
    T_\varepsilon^{\mathcal X}] \neq \emptyset] + \mathbb
    P_{\alpha,\theta}^0[\widetilde T^{\mathcal X}_\varepsilon\leq
    T_2^{\mathcal X} ]
  \end{align}
  and bound both terms on the right hand side separately for
  $\varepsilon = \varepsilon(\alpha) =
  \frac{\log\alpha}{\sqrt\alpha}$.  The bound of the first term is
  established by
  \begin{align*}
    \frac{\int_x^1 \frac{e^{-2\alpha
          y}}{y^\theta}dy}{\int_\varepsilon^1 \frac{e^{-2\alpha
          y}}{y^\theta}dy} = \mathcal O\Big(
    \Big(\frac{\varepsilon}{x}\Big)^\theta
    e^{-2\alpha(x-\varepsilon)}\Big)
  \end{align*}
  uniformly for $\varepsilon\leq x\leq 1$ and
  \begin{align*}
    \mathbb P_{\alpha,\theta}^0 & [\xi_4^{\mathcal X} \cap [T_0;
    \widetilde T_\varepsilon^{\mathcal X}] \neq \emptyset] = \mathbb
    E^0_{\alpha,\theta}\Big[ 1 - \exp\Big( - \rho \int_0^{\widetilde
      T_\varepsilon^{\mathcal X}} X_t dt \Big)\Big] \\ & = \mathbb
    E^\varepsilon_{\alpha,\theta}\Big[ 1 - \exp\Big( - \rho
    \int_0^{\infty} Y_t dt \Big)\Big] \leq \rho \int_0^1
    t^{\ast\ast}_{\alpha,\theta}(x;\varepsilon) x dx \\ & = \mathcal
    O\Big( \rho \int_0^\varepsilon \int_x^1 \frac{1}{1-x} \Big(\frac
    xy\Big)^\theta e^{-2\alpha(y-x)} dy dx + \rho \int_\varepsilon^1
    \int_x^1 \frac{1}{1-x}\Big( \frac\varepsilon y\Big)^\theta
    e^{-2\alpha(y-\varepsilon)}dy dx\Big) \\ & = \mathcal O\Big( \rho
    \int_0^\varepsilon \int_x^1 e^{-2\alpha(y-x)} dy dx + \rho
    \int_\varepsilon^1 \int_\varepsilon^y \frac{1}{1-x}
    e^{-2\alpha(y-\varepsilon)} dx dy\Big) \\ & = \mathcal O\Big(
    \frac{\rho}{\alpha} \int_0^\varepsilon dx + \rho\int_\varepsilon^1
    \log\big( 1 - (y-\varepsilon)\big) e^{-2\alpha(y-\varepsilon)}
    dy\Big) \\ & = \mathcal O\Big( \frac{1}{\sqrt\alpha} +
    \rho\int_0^1 y e^{-2\alpha y}dy\Big) = \mathcal O\Big(
    \frac{1}{\sqrt\alpha} \Big),
  \end{align*}
  while the bound of the second term follows from
  \begin{align}\label{eq:bound1}
    \mathbb P_{\alpha,\theta}^0[\widetilde T^{\mathcal
      X}_\varepsilon\leq T_2^{\mathcal X} ] = \mathbb
    E^0_{\alpha,\theta} \Big[ 1- \exp\Big( - \int_{\widetilde
      T^{\mathcal X}_\varepsilon}^{T} \frac \theta 2
    \frac{1-X_t}{X_t}dt\Big) \Big] \leq \frac \theta 2
    \frac{1}{\varepsilon} \mathbb E^0_{\alpha,\theta}[T^\ast] =
    \mathcal O \Big( \frac{1}{\sqrt\alpha}\Big).
  \end{align}
  Hence, we have bounded both terms on the right hand side of
  \eqref{eq:pr1g} and thus have proved \eqref{eq:l:est2:1}.

  \noindent\emph{Proof of \eqref{eq:l:est2:1b}:}
  Note that by \eqref{eq:green2}
  \begin{equation}
    \label{eq:proof1a0}
    \begin{aligned}
      \mathbb P^0_{\alpha,\theta}& [\xi_4^{\mathcal X}
      \cap[T_0;T_3^{\mathcal X}]\neq \emptyset] \\ & = \mathbb
      E^0_{\alpha,\theta} \Big[ \int_{T_0}^T \Big( 1 - \exp\Big( -
      \int_{T_0}^t \rho X_s ds \Big) \Big) \rho(1-X_t) \exp\Big(
      -\int_t^T \rho(1-X_s)ds \Big) dt \Big] \\ & \leq \rho^2 \mathbb
      E_{\alpha,\theta}^0\Big[ \int_{T_0}^T (1-X_t) \int_{T_0}^tX_s ds
      dt \Big] \\ & = \rho^2 \int_0^1 \int_0^1 t_\theta^\ast(x;0)
      t_\theta^\ast(y;x) x(1-y) dy dx.
    \end{aligned}
  \end{equation}
  We split the last double integral into parts. First,
  \begin{equation}
    \label{eq:proof1a}
    \begin{aligned}
      \int_0^1 \int_0^x t_\theta^\ast(x;0) t_{\alpha,\theta}^\ast(y;x)
      x(1-y) dy dx \leq \mathbb V^0_{\alpha,\theta}[T^\ast] = \mathcal
      O\Big(\frac{1}{\alpha^2}\Big)
    \end{aligned}
  \end{equation}
  by Proposition \ref{P1}, \eqref{eq:P2b}.  Second, recall
  $t_{\alpha,\theta}^\ast(y,x) = t_{\alpha,\theta}^\ast(y;0)$ for
  $x\leq y$. So we have, for all values of $\theta$,
  \begin{equation}
    \label{eq:proof1}
    \begin{aligned}
      \int_0^1 \int_x^1 & t_{\alpha,\theta}^\ast(x;0)
      t_{\alpha,\theta}^\ast(y;0) x(1-y) dy dx \leq \int_0^1 \int_x^1
      t_{\alpha,\theta}(x;0) t_{\alpha,\theta}(y;0) x(1-y) dy dx \\ &
      = \mathcal O \Big( \int_0^1 \int_x^1 \int_x^1 \int_y^1
      e^{-2\alpha(z+z'-x-y)} \Big( \frac{xy}{zz'}\Big)^\theta
      \frac{1}{1-x}\frac 1y dz' dz dy dx \Big) \\ & = \mathcal O \Big(
      \frac{1}{\alpha^2}\int_0^{2\alpha} \int_x^{2\alpha}
      \int_x^{2\alpha} \int_y^{2\alpha} e^{-(z+z'-x-y)}
      \underbrace{\Big( \frac{xy}{zz'}\Big)^\theta}_{\leq 1}
      \frac{1}{2\alpha-x}\frac 1y dz' dz dy dx \Big) \\ & = \mathcal O
      \Big( \frac{1}{\alpha^2} \int_0^{2\alpha} \int_0^{y}
      \frac{1}{2\alpha-x}\frac{1}{y} dx dy\Big) = \mathcal O \Big(
      \frac{1}{\alpha^2}
      \int_0^{2\alpha} \frac{\log(1-\tfrac{y}{2\alpha})}{y} dy\Big) \\
      & = \mathcal O \Big( \frac{1}{\alpha^2}\int_0^1
      \frac{\log(1-y)}{y} dy\Big) = \mathcal O\Big(\frac
      1{\alpha^2}\Big).
    \end{aligned}
  \end{equation}
  Hence, plugging \eqref{eq:proof1a} and \eqref{eq:proof1} into
  \eqref{eq:proof1a0} establishes \eqref{eq:l:est2:1b} since $\rho =
  \mathcal O\Big(\frac{\alpha}{\log\alpha}\Big)$.

  \noindent \emph{Proof of \eqref{eq:l:est2:2}:} We simply observe,
  using Proposition \ref{P1},
  \begin{align*}
    \mathbb P^0_{\alpha,\theta}[\xi_4^{\mathcal X} \cap[T_0;T]\neq
    \emptyset] & = \mathbb E_{\alpha,\theta}^0[1-e^{-T^\ast}] \leq
    \mathbb E^0_{\alpha,\theta}[T^\ast] = \mathcal
    O\Big(\frac{\log\alpha}{\alpha}\Big).
  \end{align*}

  \noindent \emph{Proof of \eqref{eq:l:est2:3}:} We will use the
  time-reversed process $\mathcal Y$ as in the proof of
  \eqref{eq:l:est2:1}. Note that
  \begin{align}
    \label{eq:pr1h}
    \mathbb P_{\alpha,\theta}^0[\xi_6^{\mathcal X} \cap [T_0;
    T_2^{\mathcal X}] \neq \emptyset] \leq \mathbb
    P_{\alpha,\theta}^0[\xi_6^{\mathcal X} \cap [T_0; \widetilde
    T_\varepsilon^{\mathcal X}] \neq \emptyset] + \mathbb
    P_{\alpha,\theta}^0[\widetilde T^{\mathcal X}_\varepsilon\leq
    T_2^{\mathcal X} ]
  \end{align}
  and the last term is bounded by \eqref{eq:bound1}. The first term is
  bounded using
  \begin{align*}
    \frac{1}{\int_\varepsilon^1 \frac{e^{-2\alpha y}}{y^\theta} dy} =
    \mathcal O\big(\alpha \varepsilon^\theta
    e^{2\alpha\varepsilon}\big)
  \end{align*}
  by (recall $\varepsilon=\varepsilon(\alpha) =
  \frac{\log\alpha}{\sqrt\alpha}$)
  \begin{align*}
    \mathbb P_{\alpha,\theta}^0 & [\xi_6^{\mathcal X} \cap [T_0;
    \widetilde T_\varepsilon^{\mathcal X}] \neq \emptyset] \leq
    \int_0^1 t^{\ast\ast}_{\alpha,\theta}(x;\varepsilon) \frac{x}{1-x}
    dx \\ & = \mathcal O\Big( \int_0^\varepsilon \int_x^1
    \frac{1}{(1-x)^2} \Big(\frac xy\Big)^\theta e^{-2\alpha(y-x)} dy
    dx \\ & \qquad \qquad \qquad \qquad \qquad + \alpha
    \int_\varepsilon^1 \int_x^1 \int_x^1 \frac{1}{(1-x)^2}\Big(
    \frac{x\varepsilon}{yz}\Big)^\theta
    e^{-2\alpha(y+z-x-\varepsilon)}dz dy dx\Big) \\ & = \mathcal
    O\Big( \int_0^\varepsilon \int_x^1 e^{-2\alpha(y-x)} dy dx +
    \frac{1}{\alpha} \int_\varepsilon^1 e^{-2\alpha(x-\varepsilon)} dx
    \Big) \\ & = \mathcal O \Big( \frac{\log\alpha}{\alpha^{3/2}} +
    \frac 1{\alpha^2}\Big) = \mathcal O \Big(
    \frac{\log\alpha}{\alpha^{3/2}}\Big).
  \end{align*}
  \noindent\emph{Proof of \eqref{eq:l:est2:3b}:} Note that
  \begin{align*}
    \mathbb P^0[\xi_6^{\mathcal X} \cap[T_0;T_3^{\mathcal X}]\neq
    \emptyset] & \leq \rho \int_0^1 \int_0^1 t_\theta^\ast(w;0)
    t_\theta^\ast(x;w) \frac{w}{1-w} (1-x) dx dw.
  \end{align*}
  We split the last integral and use that $t^\ast(x;w) = t^\ast(x;0)$
  for $w\leq x$, such that
  \begin{align*}
    \int_0^1 \int_w^1 t_{\alpha,\theta}^\ast(w;0)
    t_{\alpha,\theta}^\ast(x;0) \frac{w}{1-w} (1-x) dx dw & \leq
    \mathcal O\Big(\Big( \int_0^1
    t_{\alpha,\theta}^\ast(w;0)dw\Big)^2\Big) \\ & = \mathcal O \big(
    (\mathbb E_{\alpha,\theta}^0[T^\ast])^2\big) = \mathcal
    O\Big(\frac{(\log\alpha)^2}{\alpha^2}\Big)
  \end{align*}
  by Proposition \ref{P1}. For the second part, using
  \eqref{eq:trick}, we have in the case $\theta<1$, by a calculation
  similar to \eqref{eq:42ab},
  \begin{equation}\label{eq:sim1a}
    \begin{aligned}
      \int_0^1 \int_0^w & t_{\alpha,\theta}^\ast(w;0)
      t_{\alpha,\theta}^\ast(x;w) \frac{w}{1-w}(1-x) dx dw \\ & =
      \mathcal O \Big( \int_0^1 \int_0^w \int_w^1 \int_w^1
      \frac{e^{2\alpha(w+x-y-z)}}{(1-w)^2 x} \Big(
      \frac{wx}{yz}\Big)^\theta (2\alpha x \wedge 1)^{2-2\theta} dz dy
      dx dw\Big) \\ & = \mathcal O\Big( \frac 1\alpha \int_0^{2\alpha}
      \int_0^z \int_0^y \int_0^w \frac{e^{w+x-y-z}}{x(2\alpha-w)^2}
      \Big(\frac{wx}{yz}\Big)^\theta (x\wedge 1)^{2-2\theta} dx dw dy
      dz \Big) \\ & = \mathcal O \Big( \frac 1\alpha
      \int_0^{2\alpha}\int_0^z \int_0^y \int_0^{w\wedge 1}
      \frac{e^{-z}}{x(2\alpha-w)^2} \Big( \frac{wx}{yz}\Big)^\theta
      x^{2-2\theta} dx dw dy dz \\ & \qquad \qquad \qquad \qquad
      \qquad + \frac 1\alpha \int_1^{2\alpha} \int_x^{2\alpha}
      \int_w^{2\alpha} \int_y^{2\alpha}
      \frac{e^{w+x-y-z}}{x(2\alpha-w)^2}dz dy dw dx\Big) \\ & =
      \mathcal O \Big( \frac{1}{\alpha} \int_0^{2\alpha} \int_0^z
      \int_0^y \frac{e^{-z} w^2}{(2\alpha-w)^2} \frac{1}{(yz)^\theta}
      dw dy dz \\ & \qquad \qquad \qquad \qquad \qquad + \frac 1\alpha
      \int_1^{2\alpha} \int_x^{2\alpha} \int_w^{2\alpha}
      \frac{e^{w+x-y}(e^{-y} - e^{-2\alpha})}{x(2\alpha-w)^2} dy dw dx
      \Big)\\ & = \mathcal O \Big( \frac{1}{\alpha^3} \int_0^{2\alpha}
      \int_0^z \int_0^{y\wedge \alpha} \frac{e^{-z}w^2}{(yz)^\theta}dw
      dy dz + e^{-\alpha}\alpha^{1-2\theta} \int_\alpha^{2\alpha}
      \int_w^{2\alpha} \int_w^{2\alpha} \frac{1}{(2\alpha-w)^2} dz dy
      dw \\ & \qquad \qquad \qquad \qquad + \frac 1\alpha
      \int_1^{2\alpha} \int_x^{2\alpha} \frac{e^{w+x}(\tfrac 12
        (e^{-2w} - e^{-4\alpha})
        - e^{-w-2\alpha} + e^{-4\alpha})}{x(2\alpha-w)^2} dw dx\Big) \\
      & \stackrel{w\to 2\alpha-w}{=} \mathcal O\Big(
      \frac{1}{\alpha^3} \int_0^{2\alpha} e^{-z} z^{4-2\theta} dz \\ &
      \qquad \qquad \qquad \qquad \qquad + \frac {e^{-4\alpha}}\alpha
      \int_1^{2\alpha} \int_0^{2\alpha-x} \frac{e^{2\alpha-w+x}
        (\tfrac 12 (e^{2w}-1) -e^w+1)}{xw^2} dw dx \Big) \\ & =
      \mathcal O \Big( \frac{1}{\alpha^3} + \frac{1}{\alpha}
      \int_1^{2\alpha-1} \frac{e^{-2x}}{x(2\alpha-x)^2} dx \Big) =
      \mathcal O\Big( \frac{1}{\alpha^3}\Big).
    \end{aligned}
  \end{equation}
  For $\theta \geq 1$, we compute, similar to \eqref{eq:Var1},
  \begin{align*}
    \int_0^1 \int_0^w & t_{\alpha,\theta}^\ast(w;0)
    t_{\alpha,\theta}^\ast(x;w) \frac{w}{1-w}(1-x) dx dw = \int_0^1
    \int_0^w \int_w^1 \int_w^1
    \frac{e^{{2\alpha}(w+x-y-z)}}{(1-w)^2x} \Big(
    \frac{wx}{yz}\Big)^\theta dz dy dx dw \\ &
    \stackrell{(w,x,y,z)\to}{\leq}{{2\alpha}(w,x,y,z)} \mathcal O
    \Big( \frac 1\alpha \int_0^{2\alpha} \int_0^z \int_0^y \int_0^w
    \frac{e^{w+x-y-z}}{(2\alpha-w)^2 }\frac{w}{yz} dx dw dy dz\Big)
    \\ & = \mathcal O\Big( \frac 1\alpha \int_0^{2\alpha} \int_0^z
    \int_0^y \int_0^{w\wedge 1} \frac{e^{-z}w}{({2\alpha}-w)^2 yz} dx
    dw dy dz \\ & \qquad \qquad \qquad \qquad \qquad \qquad + \frac
    1\alpha \int_1^{2\alpha} \int_x^{2\alpha} \int_w^{2\alpha}
    \int_y^{2\alpha} \frac{e^{w+x-y-z}w}{(2\alpha-w)^2 yz} dz dy dw
    dx\Big) \\ & = \mathcal O\Big( \frac 1{\alpha^3} \int_0^{2\alpha}
    \int_0^z \int_0^{y\wedge \alpha} e^{-z} dw dy dz + \frac 1\alpha
    \int_1^{2\alpha} \int_x^{2\alpha} \int_w^{2\alpha}
    \int_y^{2\alpha} \frac{e^{w+x-y-z}}{(2\alpha-w)^2 x} dz dy dw
    dx\Big) \\ & = \mathcal O\Big( \frac{1}{\alpha^3}\Big),
  \end{align*}
  since the last term in the second to last line equals the term in
  the fifth line of \eqref{eq:sim1a} such that we are done.
\end{proof}

\section{Proof of Theorem \ref{T1}}
\label{sub:T1}
Recall the transition rates of the process $\xi^{\mathcal X}$ given in
Table \ref{tab:2}.  We prove Theorem \ref{T1} in four steps. First, we
establish that almost surely, all lines in $\xi^{\mathcal X}$ are in
the wild-type background by time $\beta_0$. In Step 2, we give an
approximate structured coalescent $\eta^{\mathcal X}$, which has
different rates before and after $\beta_0$. This process already
provides us with a good approximation for $\xi_{\geq\beta_0}$. In Step
3, we will use a random time-change of the diffusion $\mathcal X$ to a
supercritical Feller diffusion $\mathcal Y$ with immigration. In Step
4 we will use facts about the connection of the supercritical
branching process with immigration to a Yule process with immigration.

\subsection{Step 1: All lines in wild-type background by time $\beta_0$}
We will show below that all lines in the structured coalescent
$\xi^{\mathcal X}$ are in the wild-type background by time $\beta_0$.

\begin{proposition}\label{P4}
  For all values of $\theta, \alpha$,
  \begin{align*}
    \mathbb P_{\alpha,\theta}[\xi_{\beta_0}^B = \emptyset] = 1.
  \end{align*}
\end{proposition}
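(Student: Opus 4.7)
The plan is to prove the claim line-by-line: fix a single sampled line $i\in\{1,\ldots,n\}$ and show that line $i$ is in the wild-type background at coalescent time $\beta_0$ almost surely; then intersecting the $n$ resulting measure-one events yields $\xi^B_{\beta_0}=\emptyset$. Setting $Y_\beta:=X_{T-\beta}$, so that $Y_{\beta_0}=X_{T_0}=0$, the only mechanisms that can move line $i$ between backgrounds are $\xi_2^{\mathcal X}$ (mutation $B\to b$), $\xi_3^{\mathcal X}$ (recombination $B\to b$) and $\xi_4^{\mathcal X}$ (recombination $b\to B$); only the last of these can carry line $i$ into $B$.

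First I would observe that the total intensity of $\xi_4^{\mathcal X}$ along line $i$ over $[0,\beta_0]$ is
\[
\rho\int_0^{\beta_0} Y_\beta\,d\beta \;=\; \rho\int_{T_0}^{T} X_t\,dt \;\le\; \rho\,T^\ast,
\]
which is almost surely finite. Hence line $i$ enters $B$ only finitely often on $[0,\beta_0]$ (counting the initial presence at $\beta=0$ as the first entry), so there is a last $B$-episode starting at some random $\sigma<\beta_0$. Once this episode ends, no further $b\to B$ events occur by the choice of $\sigma$, so it suffices to show that the last $B$-episode terminates strictly before $\beta_0$ almost surely.

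This is where Lemma~\ref{l:est1a} enters: conditional on $\mathcal X$ and on $\sigma$, the probability that line $i$ remains in $B$ throughout $[\sigma,\beta_0)$ is bounded by the probability that no $\xi_2^{\mathcal X}$-mutation affects it in that interval, namely
\[
\exp\Bigl(-\int_\sigma^{\beta_0} \tfrac{\theta}{2}\tfrac{1-Y_\beta}{Y_\beta}\,d\beta\Bigr) \;=\; \exp\Bigl(-\int_{T_0}^{T-\sigma}\tfrac{\theta}{2}\tfrac{1-X_t}{X_t}\,dt\Bigr).
\]
Applying Lemma~\ref{l:est1a} on the right-neighbourhood $[T_0,T_0+s)$ of $T_0$ with $s=T-\sigma-T_0>0$ shows the integral in the exponent is almost surely infinite, so this probability vanishes. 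Thus line $i$ almost surely leaves $B$ before $\beta_0$ and stays in $b$ until $\beta_0$.

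The main technical concern is the well-posedness of $\xi^{\mathcal X}$ up to and including $\beta_0$, since the per-line rate $\tfrac{\theta}{2}\tfrac{1-Y_\beta}{Y_\beta}$ blows up as $\beta\uparrow\beta_0$. I plan to handle this by building the structured coalescent through the Poisson processes of Table~\ref{tab:1}, letting a given point act only when the line it refers to is in the relevant background: the divergent intensity of $\xi_2^{\mathcal X}$ then merely reflects the fact that a line in $B$ close to $\beta_0$ mutates essentially instantaneously, which is exactly what the argument establishes. In particular each line experiences only finitely many background transitions on $[0,\beta_0]$, which legitimises the construction and yields the proposition.
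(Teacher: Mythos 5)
Your proposal is correct and follows essentially the same route as the paper: the decisive ingredient in both is Lemma~\ref{l:est1a}, i.e.\ the almost sure accumulation of $\xi_2^{\mathcal X}$-mutation events immediately to the right of $T_0$, which forces any line still in the beneficial background near $\beta_0$ to escape to the wild-type background. The only difference is that you spell out a detail the paper leaves implicit — that $\rho\int_{T_0}^{T}X_t\,dt\le\rho T^\ast<\infty$ a.s., so $\xi_4^{\mathcal X}$ contributes only finitely many $b\to B$ returns and a last $B$-episode is well defined — which is a welcome tightening rather than a departure.
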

\begin{proof}
  Note that the structured coalescent $\xi^{\mathcal X}$ can be
  constructed using a finite number of processes $\xi_1^{\mathcal X},
  \xi_2^{\mathcal X}, \xi_3^{\mathcal X}, \xi_4^{\mathcal X}$ (compare
  Table \ref{tab:1}). In particular, the escape of lines in the
  beneficial background to the wild-type background due to mutation is
  given by the processes $\xi_2^{\mathcal X}$. Moreover, we know from
  Lemma \ref{l:est1a} that any line in the beneficial background by
  time $T_0+\varepsilon$ for some $\varepsilon>0$ will experience such
  an escape since $\xi_2^{\mathcal X}\cap [T_0;T_0+]\neq \emptyset$
  almost surely. Hence the assertion follows.
\end{proof}

\subsection{Step 2: Approximation of $\xi_{\geq 0}$ by $\eta_{\geq 0}$}
In order to define the process $\eta^{\mathcal X}$ we use transition
rates as given in Tables \ref{tab:3} and \ref{tab:4}. Moreover, set
$$\eta_{\geq s} := \int \mathbb P_{\alpha,\theta}[d\mathcal X]
(\eta_{s+t}^{\mathcal X})_{t\geq 0}.$$ We will establish that
$\xi_{\geq 0}$ and $\eta_{\geq 0}$ are close in variational distance.

\begin{proposition}\label{P5}
  The bound
  \begin{align*}
    d_{TV}(\xi_{\geq 0}, \eta_{\geq 0}) = \mathcal O \Big(
    \frac{1}{(\log\alpha)^2}\Big).
  \end{align*}
  holds in the limit of large $\alpha$ and uniformly on compacta in
  $n, \gamma$ and $\theta$.
\end{proposition}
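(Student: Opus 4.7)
The plan is to realize $\xi^{\mathcal X}$ and $\eta^{\mathcal X}$ on a single probability space by a driving construction based on the six Poisson processes $\xi_1^{\mathcal X},\dots,\xi_6^{\mathcal X}$ of Section \ref{sub:key}, then to identify a small collection of \emph{bad} events on whose complement the two coupled processes agree pathwise. Since total variation is dominated by the probability that a coupling fails, the bound follows from Lemmas \ref{l:est1} and \ref{l:est2} together with the description of the rates in Tables \ref{tab:2}, \ref{tab:3} and \ref{tab:4}. The computation is carried out per lineage, so the bounds pick up a factor of the sample size $n$, which is harmless since the error is claimed only uniformly on compacta in $n$.

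First I would construct $\xi^{\mathcal X}$ by the following thinning: given the frequency path $\mathcal X$, each $B$-line participates in coalescences driven by $\xi_1^{\mathcal X}$, is forced to the wild-type background at events of $\xi_2^{\mathcal X}$ or $\xi_3^{\mathcal X}$, and each $b$-line is moved to the beneficial background at events of $\xi_4^{\mathcal X}$ and coalesces with another $b$-line through events of $\xi_5^{\mathcal X}$ and $\xi_6^{\mathcal X}$ (using $1+\tfrac{X_t}{1-X_t}=\tfrac{1}{1-X_t}$). On the same Poisson data I would define $\eta^{\mathcal X}$ by declaring that, during $[0,T_0]$, coalescences in $b$ are driven only by $\xi_5^{\mathcal X}$ (so that $b$-lines behave like a Kingman coalescent) and events of $\xi_4^{\mathcal X}$ and $\xi_6^{\mathcal X}$ are ignored, whereas during $[T_0,T]$ the only accepted events are coalescences and mutations/recombinations out of the beneficial background up to the first time a line leaves $B$; any subsequent visit of a line to $B$ via $\xi_4^{\mathcal X}$ and any coalescence in $b$ via $\xi_6^{\mathcal X}$ is suppressed. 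This is precisely the behaviour encoded in Tables \ref{tab:3} and \ref{tab:4}.

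With this coupling the two processes agree on the complement of the event
\[
\mathcal B := \bigl\{\xi_4^{\mathcal X}\cap[0,T_0]\neq\emptyset\bigr\}\cup\bigl\{\xi_6^{\mathcal X}\cap[0,T_0]\neq\emptyset\bigr\}\cup\bigl\{\xi_4^{\mathcal X}\cap[T_0,T_3^{\mathcal X}]\neq\emptyset\bigr\}\cup\bigl\{\xi_6^{\mathcal X}\cap[T_0,T_3^{\mathcal X}]\neq\emptyset\bigr\},
\]
where the last two terms capture the possibility that a line, after reaching the wild-type background at an event of $\xi_2^{\mathcal X}\cup\xi_3^{\mathcal X}$, either recombines back into $B$ or coalesces with another $b$-line before time $T$. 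The standard inequality $d_{TV}(\xi_{\geq 0},\eta_{\geq 0})\leq \mathbb P[\mathcal B]$ combined with the sub-additivity of probability and with Lemmas \ref{l:est1} and \ref{l:est2} gives
\[
d_{TV}(\xi_{\geq 0},\eta_{\geq 0}) \leq n\Bigl(\mathcal O\bigl(\tfrac{1}{\alpha\log\alpha}\bigr)+\mathcal O\bigl(\tfrac{1}{\alpha^2}\bigr)+\mathcal O\bigl(\tfrac{1}{(\log\alpha)^2}\bigr)+\mathcal O\bigl(\tfrac{\log\alpha}{\alpha}\bigr)\Bigr),
\]
whose dominant term is $\mathcal O(1/(\log\alpha)^2)$, as claimed.

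The main obstacle I expect is bookkeeping: one must verify that the coupling as defined really does keep the two processes identical outside $\mathcal B$, which requires arguing that on $\mathcal B^c$ a line in $b$ stays in $b$ and never coalesces via $\xi_6^{\mathcal X}$, so that the two rate tables produce exactly the same transitions. A secondary subtlety is the handling of the sample size: because a line is counted once it has ever reached the $b$ background, the relevant union of bad events is over at most $n$ lines, and one needs to invoke Proposition \ref{P4} (from Step 1) to conclude that every line has indeed reached $b$ by $\beta_0$, so that the bounds in Lemma \ref{l:est2} apply along each line separately.
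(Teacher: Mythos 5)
Your overall strategy coincides with the paper's: realize $\xi^{\mathcal X}$ and $\eta^{\mathcal X}$ on one probability space through the driving Poisson processes $\xi_1^{\mathcal X},\dots,\xi_6^{\mathcal X}$, bound the total variation distance by the probability that the coupling fails, and control that probability with Lemmas \ref{l:est1} and \ref{l:est2}. The identification of the dominant error as the double-recombination event \eqref{eq:l:est2:1b} of order $\mathcal O(1/(\log\alpha)^2)$ is also correct.

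There is, however, a genuine gap: the two coupled processes do \emph{not} agree on the complement of your event $\mathcal B$. During the sweep phase $[T_0;T]$ a line leaves the beneficial background either by recombination (a point of $\xi_3^{\mathcal X}$) \emph{or by mutation} (a point of $\xi_2^{\mathcal X}$); in the latter case a subsequent back-recombination is a point of $\xi_4^{\mathcal X}\cap[T_0;T_2^{\mathcal X}]$, which your $\mathcal B$ does not contain (you only include $\xi_4^{\mathcal X}\cap[T_0;T_3^{\mathcal X}]$). This is precisely why the paper invokes \eqref{eq:l:est2:1}, contributing $\mathcal O(1/\sqrt\alpha)$, alongside \eqref{eq:l:est2:1b}; the same remark applies to coalescences in $b$ driven by $\xi_6^{\mathcal X}$, where \eqref{eq:l:est2:3} must supplement \eqref{eq:l:est2:3b}. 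A second omission concerns the rate-$1$ discrepancies: by Table \ref{tab:3}, during $[0;\beta_0]$ the process $\eta^{\mathcal X}$ coalesces in $B$ at rate $\frac{1-X_t}{X_t}=\frac{1}{X_t}-1$ rather than $\frac{1}{X_t}$, and in $b$ at rate $0$ rather than $\frac{1}{1-X_t}=1+\frac{X_t}{1-X_t}$, so the coupling also fails on $\{\xi_5^{\mathcal X}\cap[T_0;T]\neq\emptyset\}$, costing $\mathcal O(\log\alpha/\alpha)$ by \eqref{eq:l:est2:2}; your construction leaves ambiguous whether $\xi_5^{\mathcal X}$-events are suppressed in $\eta$ during this phase. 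All of the missing probabilities are $o(1/(\log\alpha)^2)$, so the stated bound survives once $\mathcal B$ is enlarged by $\{\xi_4^{\mathcal X}\cap[T_0;T_2^{\mathcal X}]\neq\emptyset\}$, $\{\xi_6^{\mathcal X}\cap[T_0;T_2^{\mathcal X}]\neq\emptyset\}$ and $\{\xi_5^{\mathcal X}\cap[T_0;T]\neq\emptyset\}$, but as written the claim that the processes coincide off $\mathcal B$ is false, and the proof is incomplete until the bad-event list is corrected.
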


\begin{remark}
  Note that $(\eta^{\mathcal X}_t)_{t \geq \beta_0}$ does not depend
  on $\mathcal X$ (i.e. $\eta_{\geq \beta_0}=(\eta^{\mathcal X}_t)_{t
    \geq \beta_0}$ in distribution for all realizations of $\mathcal
  X$). Using the same argument as in Step 1 all lines of
  $\eta_{\beta_0}$ are in the wild-type background. These two facts
  together imply that $\xi_{\geq \beta_0}$ approximately has the same
  transition rates as the finite Kingman coalescent $\mathcal C$,
  which is the statement of \eqref{eq:bound2}.
\end{remark}
 
\begin{table}
  \begin{center}
    \begin{tabular}{|c|ccccc|}\hline
      \rule[-3mm]{0cm}{1cm}event & \hspace{0.5ex} coal in $B$ \hspace{0.5ex}& \hspace{0.5ex} coal in $b$ \hspace{0.5ex} &  \hspace{0.5ex} mut from $B$ to $b$\hspace{0.5ex} & \hspace{0.5ex} rec from $B$ to $b$ \hspace{0.5ex} & \hspace{0.5ex} rec from $b$ to $B$   \\[2ex]\hline
      \rule[-3mm]{0cm}{1cm}rate & $\frac{1-X_t}{X_t}$ & $0$ & $\frac \theta 2\frac{1-X_t}{X_t}$ & $\rho(1-X_t)$ &  $0$ \\\hline
\end{tabular}
\end{center}
\caption{\label{tab:3}Transition rates of $\eta^{\mathcal X}$ in the interval
  $[0;\beta_0]$.}
\end{table}

\begin{table}
  \begin{center}
    \begin{tabular}{|c|ccccc|}\hline
      \rule[-3mm]{0cm}{1cm}event & \hspace{0.5ex} coal in $B$ \hspace{0.5ex}& \hspace{0.5ex} coal in $b$ \hspace{0.5ex} &  \hspace{0.5ex} mut from $B$ to $b$\hspace{0.5ex} & \hspace{0.5ex} rec from $B$ to $b$ \hspace{0.5ex} & \hspace{0.5ex} rec from $b$ to $B$   \\[2ex]\hline
      \rule[-3mm]{0cm}{1cm}rate & $0$ & $1$ & $0$ & $0$ &  $0$ \\\hline
\end{tabular}
\end{center}
\caption{\label{tab:4}Transition rates of $\eta^{\mathcal X}$ in the interval
  $[\beta_0;\infty]$.}
\end{table}
\begin{proof}[Proof of Proposition \ref{P5}]
  Again it is important to note that $\xi^{\mathcal X}$ can be
  constructed using a finite number of Poisson processes
  $\xi_1^{\mathcal X}, ..., \xi_6^{\mathcal X}$. In the same way,
  $\eta^{\mathcal X}$ can be constructed using a finite number of
  Poisson processes $\xi_1^{\mathcal X}, \xi_2^{\mathcal X},
  \xi_3^{\mathcal X}, \xi_5^{\mathcal X}$ and Poisson processes with
  rates $\frac{1-X_t}{X_t}$.

  Consider times $0\leq \beta\leq \beta_0$ first {\color{black} and
    recall $T_i^{\mathcal X} = \sup\xi_i^{\mathcal X}$.} A single line
  may escape the beneficial background and recombine back in
  $\xi^{\mathcal X}$, while this is not possible in $\eta^{\mathcal
    X}$. Such an event in $\xi^{\mathcal X}$ requires that either
  $\xi_4^{\mathcal X}\cap [T_0; T_2^{\mathcal X}] \neq \emptyset$ or
  $\xi_4^{\mathcal X}\cap [T_0; T_3^{\mathcal X}] \neq \emptyset$ for
  one triple of the processes $\xi_2^{\mathcal X}, \xi_3^{\mathcal X},
  \xi_4^{\mathcal X}$, which has a probability of order $\mathcal
  O\Big( \frac{1}{(\log\alpha)^2}\Big)$ by \eqref{eq:l:est2:1} and
  \eqref{eq:l:est2:1b}. Hence, ignoring these events produces a total
  variation distance of at most $\mathcal O\Big(
  \frac{1}{(\log\alpha)^2}\Big)$. The coalescence rates in the
  beneficial background of the processes $\xi^{\mathcal X}$ and
  $\eta^{\mathcal X}$ differ by 1. By the bound
  \eqref{eq:l:est2:2}, the different coalescence rates in the
  beneficial background produce a total variation distance of
  $\mathcal O\big( \frac{\log\alpha}{\alpha}\big)$. Lastly, since
  $\frac 1{1-X_t} = 1 + \frac{X_t}{1-X_t}$, we can assume that
  coalescences in the wild-type background in $\xi^{\mathcal X}$ occur
  along events of one pair of processes $\xi_5^{\mathcal
    X}\cup\xi_6^{\mathcal X}$. Such an event requires that either
  $\xi_5^{\mathcal X}\cap [T_0;T] \neq\emptyset$, $\xi_6^{\mathcal X}
  \cap [T_0;T_2^{\mathcal X}]\neq \emptyset$ or $\xi_6^{\mathcal X}
  \cap [T_0;T_3^{\mathcal X}]\neq \emptyset$. These events together
  have a probability of order $\mathcal O\big(
  \frac{1}{\sqrt\alpha}\big)$ by \eqref{eq:l:est2:2},
  \eqref{eq:l:est2:3} and \eqref{eq:l:est2:3b} and hence, ignoring
  these events gives a total variation distance of order $\mathcal
  O\big( \frac{1}{\sqrt\alpha}\big)$. Hence, $\xi^{\mathcal X}$ and
  $\eta^{\mathcal X}$ are close for times $0\leq \beta\leq \beta_0$.

  Let us turn to times $\beta\geq \beta_0$. It is important to notice
  that, using the same arguments as in the proof of Proposition
  \ref{P4}, $\mathbb P_{\alpha,\theta}[\eta_{\beta_0}^B=\emptyset]=1$.
  Note that $\eta^{\mathcal X}$ differs from $\xi^{\mathcal X}$ by
  ignoring back-recombinations along processes $\xi^{\mathcal X}_4$
  and by changing the coalescence rate in the wild-type background from
  $\frac{1}{1-X_t}$ to 1. Considering a single line, ignoring events
  in $\xi^{\mathcal X}_4$ produces a total variation distance of order
  $\mathcal O\big( \frac{1}{\alpha\log\alpha}\big)$ by
  \eqref{eq:l:est1:1}. Hence, we can assume that all lineages are in
  the wild-type background for $\beta\geq \beta_0$. For coalescences
  in the wild-type background, we are using that $\frac 1{1-X_t} = 1 +
  \frac{X_t}{1-X_t}$ and the fact that ignoring events, which occur
  along one process $\xi_6^{\mathcal X}$ produces a total variation
  distance of order $\mathcal O\big( \frac{1}{\alpha^2}\big)$ by
  \eqref{eq:l:est1:2}.

  Putting all arguments together, we have
  $$ d_{TV}(\xi_{\geq 0}, \eta_{\geq 0}) \leq 
  d_{TV}(\xi_{0\leq \beta\leq \beta_0}, \eta_{0\leq \beta\leq
    \beta_0}) + d_{TV}(\xi_{\geq \beta_0}, \eta_{\geq \beta_0}) =
  \mathcal O\Big( \frac{1}{(\log\alpha)^2}\Big).$$
\end{proof}

\subsection{Step 3: Random time-change to a supercritical branching process}
By a random time change, the diffusion \eqref{eq:diff} is taken to a
supercritical branching process with immigration. Specifically, use
the random time change $d\tau = (1-X_t)dt$ to see that the
time-changed process $\mathcal Y = (Y_\tau)_{\tau\geq 0}$ solves
\begin{align}\label{eq:diff3}
  dY = \big(\tfrac \theta 2 + \alpha Y\big) d\tau + \sqrt{Y}
  d\widetilde W,
\end{align}
stopped when $Y_\tau=1$, with some Brownian motion $(\widetilde
W_\tau)_{\tau\geq 0}$ (see e.g. \cite{EthierKurtz1986}, Theorem
6.1.3). Hence, $\mathcal Y$ is a supercritical branching process with
immigration.  Analogous to $T_0$ and $T$, define the random times
\begin{align*}
  \widetilde T_0 := \sup\{\tau \geq 0: Y_\tau =0\},\qquad \qquad
  \widetilde T := \inf\{\tau \geq 0: Y_\tau =1\}
\end{align*}
as well as
$$ \widetilde \beta:= \widetilde T - \tau, \qquad \qquad \widetilde \beta_0 := \widetilde T - \widetilde T_0.$$
Conditioned on $\mathcal Y$, we define the structured coalescent
$\zeta^{\mathcal Y}:=(\zeta^{\mathcal Y}_{\widetilde \beta})_{0\leq
  \widetilde \beta\leq \widetilde \beta_0}$ with transition rates
defined in Table \ref{tab:5}. Setting
$$ \zeta_{\widetilde\beta_0} := \int \mathbb P[d\mathcal Y] \zeta_{\widetilde\beta_0}^{\mathcal Y}$$
we immediately obtain the following result.

\begin{table}
  \begin{center}
    \begin{tabular}{|c|ccccc|}\hline
      \rule[-3mm]{0cm}{1cm}event & \hspace{0.5ex} coal in $B$ \hspace{0.5ex}& \hspace{0.5ex} coal in $b$ \hspace{0.5ex} &  \hspace{0.5ex} mut from $B$ to $b$\hspace{0.5ex} & \hspace{0.5ex} rec from $B$ to $b$ \hspace{0.5ex} & \hspace{0.5ex} rec from $b$ to $B$   \\[2ex]\hline
      \rule[-3mm]{0cm}{1cm}rate & $\frac{1}{X_t}$ & $0$ & $\frac \theta 2\frac{1}{X_t}$ & $\rho$ &  $0$ \\\hline
    \end{tabular}
  \end{center}
  \caption{\label{tab:5}Transition rates of $\zeta^{\mathcal Y}$.}
\end{table}

\begin{proposition}\label{P6}
  For all $\theta, \alpha$ and $\gamma$,
  \begin{align*}
    d_{TV}(\zeta_{\widetilde\beta_0}, \eta_{\beta_0}) = 0.
  \end{align*}
\end{proposition}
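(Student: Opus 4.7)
The plan is to observe that the random time change $d\tau=(1-X_t)dt$, which was just introduced to pass from $\mathcal X$ to the supercritical Feller branching diffusion with immigration $\mathcal Y$, simultaneously carries the structured coalescent $\eta^{\mathcal X}$ on $[0,\beta_0]$ onto $\zeta^{\mathcal Y}$ on $[0,\widetilde\beta_0]$, so that the two endpoint distributions are in fact \emph{equal}, not merely close.

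First I would pin down the time change on the level of paths. The map $t\mapsto\tau(t):=\int_0^t(1-X_s)\,ds$ is strictly increasing and absolutely continuous on $[0,T]$, since $1-X_s\in(0,1]$ for $s\in[0,T)$; it sends $T_0$ to $\widetilde T_0$ and $T$ to $\widetilde T$, and satisfies $Y_{\tau(t)}=X_t$. Passing to backward times via $\beta=T-t$, $\widetilde\beta=\widetilde T-\tau$ yields the differential relation $d\widetilde\beta=(1-X_{T-\beta})\,d\beta$, and in particular the bijection $\beta_0\leftrightarrow\widetilde\beta_0$.

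Second I would compare Tables \ref{tab:3} and \ref{tab:5} rate by rate. Any jump mechanism whose rate per unit $\beta$ is $r(X_{T-\beta})$ acquires rate $r(X_{T-\beta})/(1-X_{T-\beta})$ per unit $\widetilde\beta$. Reading off Table \ref{tab:3}, coalescence in $B$ with rate $(1-X_t)/X_t$ becomes $1/Y_\tau$, mutation $B\to b$ with rate $(\theta/2)(1-X_t)/X_t$ becomes $(\theta/2)/Y_\tau$, and recombination $B\to b$ with rate $\rho(1-X_t)$ becomes $\rho$; the zero rates (coalescence in $b$, recombination $b\to B$) are trivially preserved. These are exactly the entries of Table \ref{tab:5}.

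Third I would conclude. Conditional on $\mathcal X$, the coalescent $\eta^{\mathcal X}$ is an inhomogeneous pure-jump Markov process, and a deterministic monotone reparametrisation of the time axis leaves the ordered sequence of jumps, and hence the state at the terminal backward time, pathwise unchanged. Thus $\eta^{\mathcal X}_{\beta_0}=\zeta^{\mathcal Y}_{\widetilde\beta_0}$ as random variables under the coupling induced by the time change, and integrating against the common law of $\mathcal X$ (equivalently the law of $\mathcal Y$, as noted in the paragraph preceding the proposition) gives $\mathcal L(\eta_{\beta_0})=\mathcal L(\zeta_{\widetilde\beta_0})$, i.e.\ $d_{TV}=0$. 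There is no real obstacle beyond the bookkeeping; the only point that needs a line of verification is that $\tau$ is well defined and bijective up to and including $\beta_0$, which follows from $1-X_t\in(0,1]$ on $[0,T)$ together with $\widetilde T_0=\int_0^{T_0}(1-X_s)\,ds<\infty$.
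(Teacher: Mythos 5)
Your proof is correct and takes essentially the same route as the paper's: a perfect (pathwise) coupling of $(\mathcal X,\eta^{\mathcal X})$ with $(\mathcal Y,\zeta^{\mathcal Y})$ via the random time change $d\tau=(1-X_t)\,dt$, under which $\beta_0$ is carried to $\widetilde\beta_0$ and the rates of Table \ref{tab:3} are carried exactly onto those of Table \ref{tab:5}. The paper compresses this into two sentences; your rate-by-rate check and the verification that $t\mapsto\tau(t)$ is a strictly increasing bijection are precisely the details it leaves implicit.
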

\begin{proof}
  The pairs $(\mathcal X, \eta^{\mathcal X})$ and $(\mathcal Y,
  \zeta^{\mathcal Y})$ can be perfectly coupled by setting $d\tau =
  (1-X_t)dt$. Under this random time change $\beta_0$ becomes
  $\widetilde \beta_0$ and hence, the averaged processes
  $\eta_{\beta_0}$ and $\zeta_{\widetilde\beta_0}$ can also be
  perfectly coupled, leading to a distance of 0 in total variation.
\end{proof}

\subsection{Step 4: Genealogy of $\mathcal Y$ is $\Upsilon$}

\begin{proposition}
  \label{P7}
  Let $\mathcal Y$ be a supercritical Feller branching process
  governed by \eqref{eq:diff3} started in $0$ and let
  $\widetilde{\mathcal F}^{\mathcal Y}$ be the forest of individuals
  with infinite descent. Then the following statements are true:
  \begin{enumerate}
  \item $\widetilde{\mathcal F} = \int \mathbb P[d\mathcal Y]
    \widetilde{\mathcal F}^{\mathcal Y}$ is a Yule tree with birth
    rate $\alpha$ and immigration rate $\alpha\theta$.
  \item The number of lines in $\widetilde{\mathcal F}$ extant at time
    $\widetilde T$ (when $\mathcal Y$ hits 1 for the first time) has a
    Poisson distribution with mean $2\alpha$.
  \item Given $\mathcal Y$, the pair coalescence rate of
    $\widetilde{\mathcal F}^{\mathcal Y}$ is $1/Y_\tau$ and the rate
    by which migrants occur is $\frac \theta 2 \frac{1}{Y_\tau}$.
  \end{enumerate}
\end{proposition}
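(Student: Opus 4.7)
The three assertions rest on a single construction: approximate $\mathcal Y$ by a sequence of continuous-time Galton--Watson processes with immigration, in which each individual branches at rate $N$ with selection bias $s = \alpha/N$ (offspring $2$ with probability $(1+s)/2$, $0$ otherwise) and fresh individuals immigrate at rate $\theta N/2$ per unit of $\tau$-time, so that $Y_\tau := K_\tau/N$ converges weakly to the Feller diffusion \eqref{eq:diff3}. Each individual has survival probability $p = 2s/(1+s) = 2\alpha/N + o(1/N)$. Parts~(1) and~(3) are routine consequences of this set-up. Conditional on infinite descent, each branching of a skeleton line must be a split, of which the fraction $p/(2-p)$ produces two infinite-descent offspring (a skeleton split); the per-skeleton-line rate of skeleton splits is therefore $N\cdot p/(2-p)\to\alpha$, and infinite-descent immigrants arrive at rate $(\theta N/2)\cdot p\to\alpha\theta$. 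Similarly, two specified lines among the $Y_\tau N$ individuals share a parent at rate $1/Y_\tau$ per unit $\tau$-time, and a specified line is the newest immigrant at rate $(\theta/2)/Y_\tau$. Passing to the limit via \citep{OConnell1993, EvansOConnell1994} identifies $\widetilde{\mathcal F}$ as the Yule tree with branching rate $\alpha$ and immigration rate $\alpha\theta$ and recovers the transition rates of Table~\ref{tab:5}.

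\emph{Part~(2), the main technical point.} I would prove the fixed-time identity
\begin{equation*}
N_\tau \mid Y_\tau \sim \mathrm{Poisson}(2\alpha Y_\tau), \qquad \tau \geq 0,
\end{equation*}
and apply it at the stopping time $\widetilde T$, at which $Y_{\widetilde T} = 1$ almost surely, to conclude $N_{\widetilde T}\sim\mathrm{Poisson}(2\alpha)$. To establish the identity I would compute the joint generating functional
\begin{equation*}
f(\lambda, z; \tau) := \mathbb E\bigl[e^{-\lambda Y_\tau}\, z^{N_\tau}\bigr],
\end{equation*}
derive its Kolmogorov forward equation from the joint dynamics of $(Y_\tau, N_\tau)$, and check that both $f$ and $g(\lambda, z; \tau) := \mathbb E\bigl[\exp\bigl(-(\lambda + 2\alpha(1-z))\, Y_\tau\bigr)\bigr]$ solve it with the same initial datum~$1$; uniqueness of the solution then finishes the argument. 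Equivalently, one can appeal to the backbone decomposition of supercritical continuous-state branching processes with immigration, in which the total mass $Y_\tau$ splits into $N_\tau$ independent ``family'' contributions distributed so that the Poisson mixture relation holds automatically.

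\emph{Main obstacle.} The difficulty is essentially confined to Part~(2): the marginal laws of $N_\tau$ (Yule with immigration, from Part~(1)) and of $Y_\tau$ (Feller with immigration) are each routine, but the Poisson mixture linking them requires a genuine computation involving the \emph{joint} dynamics of $(N_\tau, Y_\tau)$. Once the fixed-time identity $N_\tau\mid Y_\tau\sim\mathrm{Poisson}(2\alpha Y_\tau)$ is in hand, evaluation at the hitting time $\widetilde T$ is immediate.
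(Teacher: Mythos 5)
Your treatment of Parts (1) and (3) is essentially the paper's own argument: both approximate $\mathcal Y$ by a continuous-time Galton--Watson process with immigration (branching rate $N$, bias $s=\alpha/N$, immigration rate $\theta N/2$), compute the skeleton split rate, the prolific-immigrant rate, and the per-pair coalescence and per-line immigration rates in the particle system, and pass to the limit via \cite{OConnell1993}; your rate computations ($N\cdot p/(2-p)=Ns\to\alpha$, $(\theta N/2)p\to\alpha\theta$, $Ny^N\tfrac{1+s}{2}\binom{y^N}{2}^{-1}\to 1/Y_\tau$, $(\theta N/2)/y^N\to\theta/(2Y_\tau)$) all check out, modulo the small remark, which the paper makes explicitly, that by exchangeability these rates are the same for lines of finite and infinite descent.

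Where you genuinely diverge is Part (2), and here you have made the problem harder than it is. You propose to establish the fixed-time mixture identity $N_\tau\mid Y_\tau\sim\mathrm{Poisson}(2\alpha Y_\tau)$ for all $\tau$ (via a joint generating functional and a uniqueness argument, or via the backbone decomposition) and then evaluate at $\widetilde T$. That identity is true and classical --- $2\alpha$ is the largest root of the branching mechanism $\psi(\lambda)=\tfrac12\lambda^2-\alpha\lambda$, and by the Markov property the immigration after time $\tau$ is irrelevant to which time-$\tau$ individuals are prolific --- so your route would work, but it requires controlling the joint dynamics of $(Y_\tau,N_\tau)$, which is exactly the technical overhead you identify as the ``main obstacle.'' The paper's proof sidesteps all of this by working at the stopping time $\widetilde T$ directly in the particle system: at the first time $Y^N_t=N$ the population size is \emph{deterministic}, equal to $N$, and by the strong Markov and branching properties the $N$ individuals present are independently prolific with probability $2s+\mathcal O(s^2)$ each, so $N_{\widetilde T}$ is the limit of $\mathrm{Binomial}(N,2s+\mathcal O(s^2))$, i.e.\ $\mathrm{Poisson}(2\alpha)$. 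No mixture over the law of $Y_\tau$ is needed because the mass at $\widetilde T$ is not random. Your approach buys a stronger statement (the conditional Poisson law at every fixed time, which is the $y=1$ slice the proposition actually uses), at the cost of a genuinely nontrivial computation; the paper's buys exactly the stated claim with a two-line argument. If you do pursue your route, note also that evaluating the fixed-time identity at the hitting time $\widetilde T$ still requires the strong Markov property, so the stopping-time issue does not disappear --- it is just as present in your argument as in the paper's, where it is handled implicitly in the same way.
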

\begin{proof}
  The proposition is analogous to Lemma 4.5 of
  \cite{EtheridgePfaffelhuberWakolbinger2006} and can be proved along
  similar lines. We give an alternative proof based on an
  approximation of $\mathcal Y$ by finite models.

  Statement 1.\ is an extension of Theorem 3.2 of
  \cite{OConnell1993}. Consider a time-continuous supercritical
  Galton-Watson process $\mathcal Y^N=(Y^N_t)_{t\geq 0}$ with
  immigration, starting with 0 individuals. Each individual branches
  after an exponential waiting time with rate $N$. (Note that $N$ is a
  scaling parameter and not directly related to the population size.)
  It splits in two or dies with probabilities $\frac{1+s}{2}$ and
  $\frac{1-s}{2}$, respectively. New lines enter the population at
  rate $\frac{\theta N}{2}$. Then, $\mathcal Y^N/N \Rightarrow
  \mathcal Y$, the solution of \eqref{eq:diff3} as $N\to\infty$, if
  $Ns\xrightarrow{N\to\infty}\alpha$.  Moreover, the probability that
  an individual of the population has an infinite line of descent is
  $2s+\mathcal O(s^2)$ for small $s$. As a consequence, the rate of
  immigration of individuals with an infinite line of descent is
  $\theta \alpha$ in $\mathcal Y$. In addition, each such line has
  descendants, which have an infinite line of descent. In particular,
  each immigrant with an infinite line of descent is founder of a Yule
  tree with branching rate $\alpha$; see \cite{OConnell1993}.

  For 2., consider times $t$ when $Y_t^N/N=1$, i.e., $Y^N_t=N$ for the
  first time. Since all lines have an infinite number of offspring
  independently of each other, each with probability $2s+\mathcal
  O(s^2)$, the total number of lines with infinite descent is
  binomially distributed with parameters $N$ and $2s+\mathcal
  O(s^2))$. In the limit $N\to \infty$, this becomes a Poisson number
  of lines in $\widetilde{\mathcal F}$ with parameter $2\alpha$ at
  times $t$ when $Y_t=1$.

  For 3., let $Y^N_\tau=y^N$ such that
  $y^N/N\xrightarrow{N\to\infty}y$. Note that by exchangeability the
  coalescence and mutation rates are the same for lines of finite and
  infinite descent. Since $\mathcal Y^N/N$ converges to a diffusion
  process, we can assume that $\sup_{\tau-1/N\leq s\leq \tau}
  |Y^N_{s}-y^N|=\mathcal O(\sqrt{N})$. Consider the emergence of a
  migrant first and recall that migrants enter the population at rate
  $\frac{\theta N}{2}$, independent of $Y^N_\tau$. Since we pick a
  specific line among all $y^N$ lines with probability $1/y^N$, that
  rate of immigration for times $[\tau-1/N;\tau]$ is $\frac{\theta
    N}{2y^N+\mathcal O(\sqrt{N})} \xrightarrow{N\to\infty}
  \frac{\theta}{2}\frac{1}{y}$. Next, turn to coalescence of a pair of
  lines. Observe that such events may only occur along birth events
  forward in time, which occur at rate $Ny^N\frac{1+s}{2}$. Since the
  probability that a specific pair out of $y^N$ lines coalesces is
  $1/\binom{y^N}{2}$ we find that the coalescence rate for times
  $[\tau-1/N;\tau]$ is
  $$ N(y^N+\mathcal O(\sqrt{N})) \frac{1+s}{2} \frac{1}{\binom{y^N+\mathcal O(\sqrt{N})}{2}} 
  \xrightarrow{N\to\infty} \frac{1}{y}.$$ Hence we are done.

\end{proof}

\begin{proposition}
  \label{P8}
  The bound
  \begin{align*}
    d_{TV} ( \zeta_{\widetilde\beta_0}, \Upsilon) = \mathcal O \Big(
    \frac{1}{(\log\alpha)^2}\Big)
  \end{align*}
  holds for large $\alpha$ and is uniform on compacta in $n, \gamma$
  and $\theta$.
\end{proposition}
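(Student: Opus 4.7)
The plan is to couple $\zeta^{\mathcal Y}_{\widetilde\beta_0}$ and $\Upsilon$ on a common probability space built from $\mathcal Y$ via Proposition \ref{P7}, and then bound the probability that the two partitions differ. I would split the argument into a \emph{tree part} (coalescence and mutation in the beneficial background) and a \emph{mark part} (recombination), which can be handled separately because in Table \ref{tab:5} recombination has no back-transition and thus commutes with the other events.

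For the tree part, I would invoke parts 1 and 3 of Proposition \ref{P7}: given $\mathcal Y$, the Yule forest of lines with infinite descent has branching rate $\alpha$ and immigration rate $\alpha\theta$, and its backward pair-coalescence rate is $1/Y_\tau$ while incoming migrants arrive at rate $(\theta/2)\cdot(1/Y_\tau)$. These are precisely the coalescence and B-to-b mutation rates of $\zeta^{\mathcal Y}$ in Table \ref{tab:5}. Hence a random sample of $n$ leaves of the Yule forest at time $\widetilde T$, together with the tree partition it induces, has exactly the same law as $\zeta^{\mathcal Y}_{\widetilde\beta_0}$ with recombination suppressed. The only discrepancy with $\widetilde\Upsilon$ is that Proposition \ref{P7}.2 gives a Poisson$(2\alpha)$ number of leaves at time $\widetilde T$ whereas $\widetilde\Upsilon$ stops the Yule forest at the deterministic cutoff $\lfloor 2\alpha\rfloor$. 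Since a sample of fixed size $n$ only explores the last few Yule-times and the Hoppe-urn increments are $\mathcal O(1/i)$ at Yule-time $i\approx 2\alpha$, this contributes at most $\mathcal O(1/\sqrt\alpha)$ to the total variation, which is absorbed in the claimed error.

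For the mark part, recombinations in $\zeta^{\mathcal Y}$ are independent rate-$\rho$ Poisson processes on each extant line. Conditional on the Yule forest, the time during which the Yule process with immigration has exactly $i$ lines is exponential with rate $\alpha(i+\theta)$, so the probability that a given branch carries no recombination while the Yule process has $i$ lines is
\begin{equation*}
\frac{\alpha(i+\theta)}{\alpha(i+\theta)+\rho}=\frac{1}{1+\gamma/((i+\theta)\log\alpha)}=\exp\!\Big(-\frac{\gamma}{(i+\theta)\log\alpha}\Big)+\mathcal O\!\Big(\frac{1}{(i+\theta)^2(\log\alpha)^2}\Big).
\end{equation*}
Multiplying over $i=i_1+1,\ldots,i_2$ and using $\sum_{i\ge 1}(i+\theta)^{-2}=\mathcal O(1)$ shows that the no-mark probability of any branch agrees with $p_{i_1}^{i_2}(\gamma,\theta)$ from \eqref{eq:pii} up to a multiplicative $\mathcal O(1/(\log\alpha)^2)$. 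Because marks on distinct branches are independent in both processes and the sampled subtree has $\mathcal O(n)$ relevant branches, a union bound preserves the bound uniformly on compacta in $n,\gamma,\theta$.

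The main obstacle is the bookkeeping in the coupling that produces both $\zeta^{\mathcal Y}_{\widetilde\beta_0}$ and $\Upsilon$ from the same realization of $\mathcal Y$ and the same independent recombination clocks, and then summing the per-branch errors without inflating the exponent of $1/\log\alpha$. This works because the per-Yule-time error is suppressed by an extra factor $1/(i+\theta)^2$, so summing across the $\mathcal O(\log\alpha)$ Yule-times on a branch still only contributes $\mathcal O(1/(\log\alpha)^2)$; the remaining discrepancy from the Poisson$(2\alpha)$ vs.\ $\lfloor 2\alpha\rfloor$ stopping is of strictly smaller order, yielding the total-variation bound $\mathcal O(1/(\log\alpha)^2)$ asserted in Proposition \ref{P8}.
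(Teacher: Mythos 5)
Your overall strategy coincides with the paper's: both reduce the comparison of $\zeta_{\widetilde\beta_0}$ and $\Upsilon$ to (a) the Poisson$(2\alpha)$ versus $\lfloor 2\alpha\rfloor$ stopping of the Yule forest and (b) the constant-rate marking versus the per-branch no-mark probabilities $p_{i_1}^{i_2}(\gamma,\theta)$ of \eqref{eq:pii}, and both obtain (b) from the competing-exponentials identity $\alpha(i+\theta)/(\alpha(i+\theta)+\rho)=\exp\big(-\tfrac{\gamma}{(i+\theta)\log\alpha}\big)+\mathcal O\big((i+\theta)^{-2}(\log\alpha)^{-2}\big)$, summed over the $\mathcal O(\log\alpha)$ relevant Yule-times.

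The one step that does not hold as stated is your claim that ``marks on distinct branches are independent in both processes.'' In $\Upsilon$ they are independent by construction, but in $\zeta^{\mathcal Y}$ the marks on all lines extant during a given Yule-time interval are driven by the same exponential holding time, so they are only conditionally independent given that holding time and are positively correlated after averaging over it. Concretely, for two branches coexisting at Yule-time $i$ the joint no-mark probability is $\alpha(i+\theta)/(\alpha(i+\theta)+2\rho)$, which exceeds the square of the marginal by roughly $\gamma^2/((i+\theta)^2(\log\alpha)^2)$. The paper closes exactly this gap with its ``double hit'' estimate: the probability that two marks fall within the same Yule-time interval is bounded by $\sum_i\big(\rho/(\alpha(i+\theta)+\rho)\big)^2=\mathcal O(1/(\log\alpha)^2)$, and only after discarding such events may the marks be treated as independent. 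Since this correction is of the same order as the target bound, your conclusion survives, but the independence assertion requires this additional argument rather than being immediate. Your treatment of the Poisson versus deterministic stopping is also looser than the paper's, which cites the $\mathcal O(1/(\sqrt\alpha\log\alpha))$ estimate of Etheridge, Pfaffelhuber and Wakolbinger; any bound that is $o(1/(\log\alpha)^2)$ suffices there, so this is a matter of detail rather than substance.
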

\begin{proof}
  The statement as well as its proof is analogous to Proposition 4.7
  in \cite{EtheridgePfaffelhuberWakolbinger2006}. By Proposition
  \ref{P7}, the random partition $\zeta_{\widetilde\beta_0}$ arises by
  picking $n$ lines from the tips of a Yule tree with birth rate
  $\alpha$ with immigration rate $\alpha\theta$ and which has grown to
  a Poisson$(2\alpha)$ number of lines, and marking all lines at
  constant rate $\rho$. Hence, the difference of
  $\zeta_{\widetilde\beta_0}$ and $\Upsilon$ arises from

  ~~

  \parbox[t]{6cm}{\hspace{1cm}$\zeta_{\widetilde\beta_0}$:
  \begin{enumerate}
  \item picking from a Yule tree with Poisson$(2\alpha)$ tips
  \item a constant marking rate $\rho$ for all lines
  \end{enumerate}}
\parbox[t]{6cm}{\hspace{1cm} $\Upsilon$: \begin{enumerate} \item[1'.] picking
    from a Yule tree with $\lfloor 2\alpha\rfloor$ tips
    \item[2'.] a marking probability of $1-p_{i_1}^{i_2}(\gamma,\theta)$ for a 
      branch, which starts at Yule-time $i_1$ and ends at Yule-time $i_2$. 
    \end{enumerate}}

  ~~

  \noindent Both differences only have an effect if they lead to
  different marks of the Yule tree with immigration. To bound the
  probability of the difference of 1. and 1'., note that the Poisson
  distribution has a variance of $2\alpha$ and hence, typical
  deviations are of the order $\sqrt\alpha$. Given such a typical
  deviation of the Poisson from its mean, the probability of a
  different marking of both Yule trees is of the order $\mathcal
  O\big( \frac{1}{\sqrt{\alpha}\log\alpha}\big)$, as shown below (4.9)
  in \cite{EtheridgePfaffelhuberWakolbinger2006}. For the different
  marks from 2. and 2'. note first that the probability that two marks
  occur within any Yule-time is, since the marks and splits of the
  Yule tree having competing exponential distributions, bounded by
  $$ \sum_{i=1}^{\lfloor 2\alpha\rfloor} \Big(\frac{\rho}{\alpha (i + \theta) + 
    \rho}\Big)^2 \leq \frac{\gamma^2}{(\log\alpha)^2}
  \sum_{i=1}^\infty \frac{1}{i^2} = \mathcal O\Big(
  \frac{1}{(\log\alpha)^2}\Big).$$ Hence, treating these double
  hits of Yule times differently only leads to a total variation
  distance of $\mathcal O\big(\frac{1}{(\log\alpha)^2}\big)$. In
  particular, we may mark all lines of the Yule tree independently (as
  in $\Upsilon$) since dependence of marks only arises by double hits
  of Yule times. The probability that a line that starts in Yule time
  $i_1$ and ends in Yule-time $i_2$ is not marked, is, again using
  competing exponentials,
  \begin{align*} \prod_{j=i_1+1}^{i_2} \frac{\alpha (j +
      \theta)}{\alpha (j + \theta) + \rho} & = \prod_{j=i_1+1}^{i_2}
    \Big( \exp\Big( - \frac{\gamma/\log\alpha}{j+\theta +
      \gamma/\log\alpha}\Big) + \frac{1}{j^2} \mathcal O\Big(
    \frac{1}{(\log\alpha)^2}\Big)\Big) \\ & = \exp\Big( -
    \frac{\gamma}{\log\alpha}\sum_{j=i_1+1}^{i_2}\frac 1{j+\theta +
      \gamma/\log\alpha}\Big) + \mathcal
    O\Big(\frac{1}{(\log\alpha)^2}\Big) \\ & =
    p_{i_1}^{i_2}(\gamma,\theta) + \mathcal
    O\Big(\frac{1}{(\log\alpha)^2}\Big).
  \end{align*}
  Hence, the difference of 2. and 2.' accounts for a total variation
  distance of oder $\mathcal O\big(\frac{1}{(\log\alpha)^2}\big)$ and
  we are done.

\end{proof}

\subsection{Conclusion}
Using Propositions \ref{P4}-\ref{P8} we can now prove Theorem
\ref{T1}. Note that \eqref{eq:alsu} is the same statement as given in
Proposition \ref{P4}. Since $\xi_{\beta_0}^B=\emptyset$ almost surely,
all ancestral lines of $\xi_{\beta_0}$ must be in the wild-type
background and so, using Propositions \ref{P5}, \ref{P6} and \ref{P8},
$$ d_{TV}(\xi_{\beta_0}^b, \Upsilon) \leq d_{TV}(\xi_{\geq 0}, \eta_{\geq 0}) 
+ d_{TV}(\eta_{\beta_0}, \zeta_{\widetilde \beta_0}) +
d_{TV}(\zeta_{\widetilde \beta_0},\Upsilon) = \mathcal O \Big(
\frac{1}{(\log\alpha)^2}\Big).$$ For the approximation of $\xi^b_{\geq
  \beta_0}$ by the finite Kingman coalescent $\mathcal C$ we will use
Proposition \ref{P5}. First, note that by the same reasoning as in the
proof of Proposition \ref{P4}, $\mathbb P[\eta_{\beta_0}^B \neq
\emptyset] = 0$. Moreover, $\xi^B_{\geq \beta_0}\neq
(\emptyset)_{t\geq 0}$ requires a back-recombination event with rate
$\rho X_t$ for some time $0\leq t \leq T_0$ and thus, using
\eqref{eq:l:est1:1},
$$ \mathbb P[\xi^B_{\geq \beta_0}\neq (\emptyset)_{t\geq 0}] \leq \mathcal O\Big( \frac{1}{\alpha\log\alpha}\Big).$$
Let $\mathcal C'$ be a finite Kingman coalescent that starts with a
random number of lines and which is distributed like
$\xi_{\beta_0}^b$. Then, since $d_{TV}(\xi_{\geq \beta_0}^b, \mathcal
C') \leq d_{TV}(\xi_{\geq 0}, \eta_{\geq 0})$,
$$ d_{TV}(\xi_{\geq \beta_0}^b, \Upsilon\circ \mathcal C) \leq 
d_{TV}(\xi_{\beta_0}^b, \Upsilon) + d_{TV}(\xi_{\geq \beta_0}^b,
\mathcal C') = \mathcal O\Big( \frac{1}{(\log\alpha)^2}\Big).
$$

\subsection{Sampling at time $t<T$}
\label{rem7}
Assume $t<T$ is such that $X_t = 1 - \delta/\log\alpha$ for some
$\delta>0$. To approximate the number of recombination events in
$[t;T]$, we can use the time-rescaling to the process $\mathcal Y$
from \eqref{eq:diff3} and Proposition \ref{P7} to note that the Yule
process has a Poisson number with parameter $2\alpha( 1 -
\delta/\log\alpha)$ lines at the time the supercritical branching
process has $Y_\tau = 1 - \delta/\log\alpha$. Since recombination
events fall on the Yule tree at constant rate $\rho$, the probability
of such an event during $[\tau;\widetilde T]$ is
$$ \frac{\rho}{\alpha} \sum_{i=\lfloor 2\alpha(1-\delta/\log\alpha)\rfloor}^{\lfloor 2\alpha\rfloor} \frac 1i 
= \mathcal O \Big( \frac{1}{\log\alpha} \log \Big( \log\Big( 1 -
\delta/\log\alpha\Big)\Big) = \mathcal O\Big(
\frac{1}{(\log\alpha)^2}\Big).$$ A similar calculation shows that
there are no coalescence events in a sample from the Yule tree between
Yule times $\lfloor 2\alpha(1-\delta/\log\alpha)\rfloor$ and $\lfloor
2\alpha\rfloor$ with high probability.

~~

\noindent{\bf Acknowledgement} We thank John Wakeley for fruitful
discussion and an anonymous referee for a careful reading of our
manuscript. The Erwin-Schr\"odinger Institut, Vienna is acknowledged
for its hospitality during the Workshop \emph{Frontiers of
  Mathematical Biology} in April, 2008. Both authors were supported by
the Vienna Science and Technology Fund WWTF. PP obtained additional
support by the BMBF, Germany, through FRISYS (Freiburg Initiative for
Systems biology), Kennzeichen 0313921.


\end{document}